\documentclass[reqno,10pt,twoside]{article}
\usepackage[hmargin=3cm, vmargin=1.in, marginparwidth=2.6cm, marginparsep=0.3cm, a4paper, centering]{geometry}
\usepackage[T1]{fontenc}
\usepackage[lining]{ebgaramond}
\usepackage{amsmath,amsthm}
\usepackage[ebgaramond]{newtxmath}
\usepackage{accents}
\usepackage{comment}
\usepackage{enumerate}
\usepackage[font=small, labelfont=bf, labelsep=colon, margin=0.5in]{caption}
\usepackage{graphicx}

\usepackage{xcolor}
\definecolor{darkgreen}{rgb}{0.00, 0.67, 0.00}
\usepackage{bm}
\usepackage[toc,titletoc]{appendix}
\appendixtocoff
\usepackage{etoolbox, apptools}
\AtBeginEnvironment{appendices}{\appendixtrue}
\usepackage{longtable,booktabs,array}
\usepackage{algorithm}
\usepackage{algpseudocode}
\usepackage{marvosym}
\usepackage[section]{placeins}
\numberwithin{equation}{section}
\usepackage{tcolorbox,varwidth}
\tcbuselibrary{most}
\colorlet{thmcolor}{orange!80!white}
\colorlet{remcolor}{teal!50!white}
\colorlet{defncolor}{blue!50!black}
\usepackage{keytheorems}
\tcbset{commonstyle/.style={%
colback=white!95!tcbcolframe,
colbacktitle=white!80!tcbcolframe,
arc=2mm,
fonttitle=\bfseries,
coltitle=black,
enhanced,
varwidth boxed title*=-2cm,
attach boxed title to top left={yshift=-3mm,xshift=0.5cm,yshifttext=-1mm},
description font=\mdseries,
breakable,
before upper={\parindent15pt\noindent},
beforeafter skip balanced=10.0pt plus 1.0pt minus 1.0pt,
separator sign={\textbf{: }},
overlay first={\draw[color=tcbcolframe, line width=.5pt] (frame.south west)--(frame.south east);},
overlay middle={\draw[color=tcbcolframe, line width=.5pt] (frame.south west)--(frame.south east);\draw[color=tcbcolframe, line width=.5pt] (frame.north west)--(frame.north east);},
overlay last={\draw[color=tcbcolframe, line width=.5pt] (frame.north west)--(frame.north east);}
}
}
\newkeytheoremstyle{mythmstyle}{
bodyfont=\normalfont,
headpunct={},
notebraces={}{},
noteseparator={: }, 
notefont=\bfseries,
tcolorbox = {colframe=thmcolor,commonstyle}
}
\newkeytheoremstyle{myremstyle}{
bodyfont=\normalfont,
headpunct={},
notebraces={}{},
noteseparator={: }, 
notefont=\bfseries,
tcolorbox = {colframe=remcolor,commonstyle}
}
\newkeytheoremstyle{mydefstyle}{
bodyfont=\normalfont,
headpunct={},
notebraces={}{},
noteseparator={: }, 
notefont=\bfseries,
tcolorbox = {colframe=defncolor,commonstyle}
}
\newkeytheorem{theorem}[
  name=Theorem,
  style=mythmstyle,
  parent=section
]
\newkeytheorem{lemma}[
  name=Lemma,
  style=mythmstyle,
 sibling=theorem
]
\newkeytheorem{prop}[
  name=Proposition,
  style=mythmstyle,
 sibling=theorem
]
\newkeytheorem{corollary}[
  name=Corollary,
  style=mythmstyle,
 sibling=theorem
]
\newkeytheorem{remark}[
  name=Remark,
   sibling=theorem,
  style=myremstyle
]
\newkeytheorem{definition}[
  name=Definition,
  style=mydefstyle,
 sibling=theorem
]
\usepackage{mleftright} 
\mleftright 

\newcommand{\N}{\mathcal{N}}

\newcommand{\Dir}{{\mathrm{D}}}
\newcommand{\Neu}{{\mathrm{N}}}
\newcommand{\dr}{\mathrm{d}}

\newcommand{\er}{\mathrm{e}}

\renewcommand{\epsilon}{\varepsilon}

\newcommand{\Pt}{\mathcal{P}}
\newcommand{\Lfloor}{\left\lfloor}
\newcommand{\Rfloor}{\right\rfloor}
\newcommand{\entire}[1]{\Lfloor #1 \Rfloor}
\newcommand{\ceiling}[1]{\left\lceil#1\right\rceil}

\newcommand{\ball}[1]{\mathbb{B}^{#1}}

\DeclareMathAccent{\wtilde}{\mathord}{largesymbols}{"65}
\newcommand{\utilde}[1]{\underaccent{\wtilde}{#1}}

\renewcommand\footnotemark{}
\usepackage[nobottomtitles,pagestyles]{titlesec}
\titleformat{\section}
{\normalfont\large\bfseries}
{\filcenter\IfAppendix{Appendix }{\S}\thesection.}{1ex}{\filcenter}
\titleformat{\subsection}
{\normalfont\bfseries}
{\filcenter\S\thesubsection.}{1ex}{\filcenter}
\usepackage[colorlinks,allcolors=blue,pagebackref]{hyperref}
\usepackage{pifont}
\renewcommand*{\backrefalt}[4]{%
\ifcase #1 %
\textcolor{red}{No citations}%
\or
\ding{43}~p.~#2%
\else
\ding{43}~pp.~#2%
\fi}
\usepackage{breakcites}
\newcommand{\mydoi}[1]{\href{https://doi.org/#1}{doi: #1}}
\newcommand{\myarXiv}[1]{\href{https://arxiv.org/abs/#1}{arXiv: #1}}
\begin{document}
\title{%
P\'{o}lya's conjecture for higher-dimensional Neumann balls%
\footnote{{\bf MSC(2020): }Primary 35P15. Secondary 35P20, 33C10, 11P21}%
\footnote{{\bf Keywords: } Laplacian, eigenvalues, Weyl's law, lattice points, Bessel functions, Bessel phase functions, zeros of Bessel functions and their derivaives}%
}
\author{
Nikolay Filonov
\thanks{%
\textbf{N. F.: }St.~Petersburg Department
of Steklov Institute of Mathematics of RAS,
Fontanka 27, 191023, St. Petersburg, Russia;
\href{mailto:filonov@pdmi.ras.ru}{\nolinkurl{filonov@pdmi.ras.ru}}; ORCID: 0000-0002-1586-3031%
}
\and
Michael Levitin\hspace{-3ex}
\thanks{%
\textbf{M. L.: }Department of Mathematics and Statistics, University of Reading, 
Pepper Lane, Whiteknights, Reading RG6 6AX, UK;
\href{mailto:M.Levitin@reading.ac.uk}{\nolinkurl{M.Levitin@reading.ac.uk}}; \url{https://www.michaellevitin.net}; ORCID: 0000-0003-0020-3265%
}
\and 
Iosif Polterovich
\thanks{%
\textbf{I. P.: }D\'e\-par\-te\-ment de math\'ematiques et de statistique, Univer\-sit\'e de Mont\-r\'eal, 
CP 6128 succ Centre-Ville, Mont\-r\'eal QC  H3C 3J7, Canada;
\href{mailto:iosif.polterovich@umontreal.ca}{\nolinkurl{iosif.polterovich@umontreal.ca}}; \url{https://www.dms.umontreal.ca/\~iossif}; ORCID: 0009-0007-0052-6589%
}
\and
David A. Sher
\thanks{%
\textbf{D. A. S.:  }Department of Mathematical Sciences, DePaul University, 2320 N. Kenmore Ave, 60614, Chicago, IL, USA;
\href{mailto:dsher@depaul.edu}{\nolinkurl{dsher@depaul.edu}}; ORCID: 0009-0003-2478-1083%
}
}
\date{\small arXiv v.1; 31 July 2026} 
\maketitle
\begin{abstract} 
We prove P\'olya's conjecture for the Neumann eigenvalues of the Laplacian on Euclidean balls in dimensions three and higher. The proof further develops the approach introduced in our earlier work on the two-dimensional case and on Dirichlet eigenvalues in arbitrary dimensions. The main difficulty in the higher dimensional Neumann case is that one has to estimate zeros of the derivatives of ultraspherical Bessel functions, rather than of the usual Bessel functions.
For low-lying eigenvalues,  we use variational estimates involving dimension-dependent test functions, which is a novel  ingredient allowing  us to control a larger dimension-scaled frequency range. Other components of the proof include phase-function bounds, lattice-point counting techniques, and computer-assisted arguments. 
\end{abstract}


{\small \tableofcontents}
\section{Introduction}\label{sec:introduction}

Let $\Omega\subset\mathbb{R}^d$ be a bounded Lipschitz domain. Consider the Neumann eigenvalue problem 
\begin{equation}\label{eq:neuL}
\begin{split}    
-\Delta u=\mu u \qquad&\text{in }\Omega,\\ 
\partial_n u=0\qquad&\text{on }\partial\Omega,
\end{split}
\end{equation}
where $\partial_n u = \left.\langle \nabla u, n\rangle\right|_{\partial\Omega}$ denotes the exterior normal derivative of $u$.
The spectrum of \eqref{eq:neuL} is
\[
0=\mu_1(\Omega)<\mu_2(\Omega)\le\dots\le \mu_n(\Omega)\le \dots,
\]
enumerated with multiplicity. Denote the Neumann counting function by
\[
\N^\Neu_\Omega(\lambda):=\#\left\{n: \mu_n(\Omega)\le \lambda^2\right\}.
\]
Weyl's law for the Neumann Laplacian states that
\[
 \N^\Neu_\Omega(\lambda)
 = C_d |\Omega|\lambda^d+o(\lambda^d),
 \qquad \lambda\to+\infty,
\]
where
\[
 C_d:=
 \frac{1}
 {(4\pi)^{d/2}\Gamma\left(\frac d2+1\right)}
\]
is Weyl's constant and $|\Omega|$ denotes the $d$-dimensional
volume of $\Omega$, see, for example, \cite{Wey11,SV97}.
Thus, the principal term in the asymptotic distribution of the
Neumann eigenvalues depends only on the volume of the domain.

For sufficiently regular domains, Weyl's two-term conjecture \cite{Wey12} predicts
the sharper asymptotic formula
\begin{equation}
 \N^\Neu_\Omega(\lambda)
 =
 C_d|\Omega|\lambda^d
 +
 \frac14 C_{b,d}|\partial\Omega|\lambda^{d-1}
 +
 o(\lambda^{d-1}),
 \qquad \lambda\to+\infty,
 \label{eq:Weyl-two-term}
\end{equation}
where $|\partial\Omega|$ denotes the $(d-1)$-dimensional measure of
the boundary, and $C_{b,d}>0$ is a positive constant (see, e.g. \cite{FLPS}).   Consequently, whenever \eqref{eq:Weyl-two-term}
holds, one has
\begin{equation}
\label{eq:Polyagen}
\N^\Neu_\Omega(\lambda)> C_d|\Omega|\lambda^d
\end{equation}
for all sufficiently large $\lambda$. The two-term asymptotic formula
remains conjectural in full generality, although it is known under
the standard non-periodicity condition on the billiard flow
\cite{Ivr80}. In particular, it holds for real analytic convex domains. 
We refer to \cite[\S1]{FLPS} for a more
detailed discussion.

P\'olya's conjecture for the Neumann Laplacian asserts that inequality 
\eqref{eq:Polyagen} is valid not only
asymptotically, but at {\em any}  value $\lambda \ge 0$.
The main content of the conjecture is therefore a uniform estimate
for the entire spectrum, including the low- and intermediate-frequency
ranges which are not controlled by Weyl's law. 

Note that P\'olya's conjecture for the Neumann Laplacian can be equivalently restated in terms of individual eigenvalues rather than the counting function:
\begin{equation}\label{eq:Polyan}
\left(\mu_n(\Omega)\right)^d\le \left(\frac{n-1}{C_d|\Omega|}\right)^2\qquad \text{for all } n=2,3,\dots.
\end{equation}

There is a parallel P\'olya conjecture for the Dirichlet Laplacian. If
\[
 0<\lambda_1(\Omega)\leq\lambda_2(\Omega)\leq\cdots
\]
are the Dirichlet eigenvalues, enumerated with multiplicity, and
\[
 \N^\Dir_\Omega(\lambda)
 :=
 \#\bigl\{n:\lambda_n(\Omega)\leq\lambda^2\bigr\},
\]
then the Dirichlet P\'olya conjecture states that
\[
 \N^\Dir_\Omega(\lambda)
 \leq
 C_d|\Omega|\lambda^d
 \qquad\text{for every }\lambda\geq0.
\] 
Thus, the Dirichlet and Neumann counting functions are conjectured
to lie, respectively, below and above the principal term in Weyl's
law.

P\'olya proved the conjecture for domains which tile Euclidean space
\cite{Pol54,Pol61}, that is, domains whose congruent copies cover
$\mathbb R^d$, up to a set of measure zero, without overlapping.
Additional restrictions occurring in the original Neumann argument
were subsequently removed, see \cite{Kel66}. For arbitrary
domains, the full conjecture remains open. The weaker universal
bounds
\[
 \left(\frac{d}{d+2}\right)^{d/2}
\mathcal{N}^\Dir_\Omega(\lambda)
 \leq
 C_d|\Omega|\lambda^d
 \leq
 \frac{d+2}{2} \mathcal{N}^\Neu_\Omega(\lambda)
\]
follow from the Berezin--Li--Yau and Kr\"oger inequalities, see
\cite{LiYau, Kro, Lap}.

Beyond tiling domains, comparatively few cases of P\'olya's
conjecture are known. In our earlier work \cite{FLPS}, we proved
both the Dirichlet and Neumann conjectures for the planar disk and
for arbitrary planar circular sectors. In the same paper, we proved
the Dirichlet conjecture for Euclidean balls in every dimension.
The Dirichlet conjecture has also recently been established for
annuli \cite{FLPS-Ann}.  Some other recent advances can be found  in \cite{FrSa23, FLPS-AB, GJWY24, HeWa24, GMWZ25, JiLi25, GaJiLi26}.
The higher-dimensional Neumann problem for
balls, which was left open in \cite{FLPS}, is the subject of the
present paper.

Let $\mathbb{B}^d$ denote the unit ball in $\mathbb R^d$, where $d\geq3$.
Since
\[
\left |\mathbb{B}^d\right|
 =
 \frac{\pi^{d/2}}
 {\Gamma\left(\frac d2+1\right)},
\]
the principal Weyl term for $\mathbb{B}^d$ is
 \[
 C_d\left|\mathbb{B}^d\right|\lambda^d
 =
 w_d\lambda^d,
 \qquad
 w_d:=
 \frac{1}
 {2^d\left(\Gamma\left(\frac d2+1\right)\right)^2}.
 \]
Our main result is that  P\'olya's conjecture for the Neumann eigenvalues of balls is valid in every dimension $d\ge 3$. Together with the two-dimensional result of \cite{FLPS}, this proves the conjecture for Euclidean balls in all dimensions $d\ge 2$.
\begin{theorem}\label{thm:main}
Let $d\geq3$. Then
\[
 \N^\Neu_{B^d}(\lambda)
 \geq
 w_d\lambda^d
 \qquad\text{for every }\lambda\geq0.
\]
\end{theorem}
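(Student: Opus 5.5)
We separate variables in spherical coordinates. Neumann eigenfunctions of $\mathbb{B}^d$ have the form $r^{1-d/2}J_{\nu}(\sqrt{\mu}\,r)Y_\ell(\theta)$ with $\nu=\ell+\frac d2-1$ and $Y_\ell$ a spherical harmonic of degree $\ell$. The eigenvalues are $\mu=(j'_{\ell,k})^2$, where $j'_{\ell,k}$ is the $k$-th positive zero of the derivative of the ultraspherical Bessel function $r^{1-d/2}J_\nu(r)$. For $\ell=0$ we add the zero eigenvalue. Each such eigenvalue has multiplicity $\dim\mathcal H_\ell=\binom{\ell+d-1}{d-1}-\binom{\ell+d-3}{d-1}$. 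Hence
\[
\N^\Neu_{\mathbb{B}^d}(\lambda)=\sum_{\ell\ge0}\dim\mathcal H_\ell\cdot\#\{k\ge0:\ j'_{\ell,k}\le\lambda\},
\]
with the convention that $j'_{0,0}=0$. The task is to bound this weighted lattice count from below by $w_d\lambda^d$. As in the Dirichlet argument of \cite{FLPS}, I would encode the zero count through a phase function. Write the ultraspherical derivative as a combination of $J_\nu$ and $Y_\nu$, and define its Bessel-type phase $\theta_\ell(\lambda)$, so that the $k$-th zero corresponds to $\theta_\ell=\pi(k-\frac14)$ up to an $\ell$-dependent shift. The count then becomes $\#\{k\}=\lfloor\theta_\ell(\lambda)/\pi+c\rfloor$ in the oscillatory region $\lambda>\nu$.

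\textbf{Step 1 (phase bounds).} I would prove a lower bound $\theta_\ell(\lambda)\ge\pi G(\nu/\lambda)\lambda-(\text{explicit shift})$, where $G(x)=\frac1\pi(\sqrt{1-x^2}-x\arccos x)$ is the Debye function. The Neumann phase differs from the Dirichlet phase by a correction involving $\arctan$ of $(\frac d2-1)/r$-type terms, and this correction must be controlled uniformly. Doing so gives
\[
\N^\Neu(\lambda)\ge\sum_{\ell<\lambda-\frac d2+1}\dim\mathcal H_\ell\,\bigl\lfloor \lambda G\bigl(\tfrac{\nu}{\lambda}\bigr)+\tfrac34-\delta_\ell\bigr\rfloor .
\]
Note that the Neumann shift is favourable, $+\frac34$ in place of the Dirichlet $+\frac14$. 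The zero mode and the modes with $\nu$ close to $\lambda$ (where the first zero $j'_{\ell,1}$ lies only slightly above $\nu$) should contribute extra counts.

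\textbf{Step 2 (lattice counting).} Replace the floor by its argument minus a sawtooth term. The main sum $\sum\dim\mathcal H_\ell\,\lambda G(\nu/\lambda)$ is evaluated by Euler–Maclaurin against $\int$. Using $\dim\mathcal H_\ell\approx\frac{2\nu^{d-2}}{(d-2)!}$, the integral reproduces $w_d\lambda^d$ exactly, plus a positive boundary term of order $\lambda^{d-1}$ coming from the $\frac34$ shift. The sawtooth error, together with the endpoint terms of the discrete sum, must be shown to be smaller than this positive $\lambda^{d-1}$ surplus. For $\lambda\ge\Lambda_d$, with $\Lambda_d$ an explicit threshold growing roughly linearly in $d$, this follows from crude bounds on sums of fractional parts weighted by polynomials in $\ell$. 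The precise regime will be governed by $\lambda/d$.

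\textbf{Step 3 (low frequencies, the main obstacle).} For $\lambda\le\Lambda_d$ the asymptotic surplus does not dominate, and $w_d\lambda^d$ is tiny compared with counts of low harmonics only when $\lambda\lesssim c\,d$. I would use the Rayleigh quotient with dimension-dependent test functions, such as $r^\ell Y_\ell$ times low-degree polynomials in $r^2$, orthogonalised against constants. These give upper bounds $j'_{\ell,k}\le \nu+\text{explicit}$, which show that the modes $\ell\lesssim\lambda$ with $k\le1,2$ are already counted. Since $\sum_{\ell\le L}\dim\mathcal H_\ell=\binom{L+d-1}{d-1}+\binom{L+d-2}{d-1}$ far exceeds $w_d\lambda^d\approx(e\lambda/d)^d/(\pi d)$ when $\lambda\sim L\approx$ const$\cdot d$, this covers the regime $\lambda\le\kappa d$. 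The difficulty is matching $\kappa d$ with $\Lambda_d$ from Step 2 for all $d$. I expect to need to sharpen both sides in the intermediate scaled range $\lambda/d\in[\kappa,K]$. The remaining finitely many small dimensions and bounded $\lambda$-windows would be closed by a rigorous computer check of the zeros.
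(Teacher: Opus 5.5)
\textbf{The gap: the crossover scale is $\lambda\asymp d^{3/2}$, not $\lambda\asymp d$.} You hand over from Step~3 to Step~2 at $\lambda\asymp d$. Below $d^{3/2}$, Step~2 is not merely hard but false.

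Write $z=\lambda(1-\theta)$. For small $\theta$, the Weyl integrand $z^{d-2}G_\lambda(z)\approx c_1\lambda^{d-1}(1-\theta)^{d-2}\theta^{3/2}$ peaks at $\lambda-z\approx\frac{3\lambda}{2d}$. Lemma~\ref{lem:Gbounds} gives $G_\lambda(\nu)<\frac14$ on a layer $\lambda-\nu\le c\lambda^{1/3}$. On that layer $\lfloor G_\lambda(\nu)+\frac34-\delta_\ell\rfloor=0$ for $\delta_\ell\ge0$, and modes with $\nu\ge\lambda$ are absent from your sum altogether. The two lengths $\frac{3\lambda}{2d}$ and $c\lambda^{1/3}$ are comparable exactly when $\lambda\asymp d^{3/2}$.

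Put $\lambda=\tau^3d^{3/2}$ as in \eqref{eq:lambdatau}. Since $\kappa_{d,m}\le 2\nu^{d-2}/(d-2)!$ by \eqref{eq:kappadmalt} and \eqref{eq:Pibounds1}, two quantities are, for large $d$, at most $C\tau^{-3}\er^{-c\tau^{-2}}\,w_d\lambda^d$. These are the Weyl mass on modes with $G_\lambda(\nu)\ge\frac14$, and the $\frac34$-surplus, which lives only on $m<\mathcal{M}$. So the right-hand side of your Step~1 inequality is itself $o(w_d\lambda^d)$ as $\tau\to0$. At $\lambda=Kd$, i.e.\ $\tau^3=Kd^{-1/2}$, it falls below $w_d\lambda^d$ for all large $d$, whatever you do with the sawtooth term.

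In that range P\'olya's inequality holds only because the ultraspherical first zeros lie far below $j'_{\nu,1}$. Taking $\rho\equiv1$ in \eqref{eq:varpr} gives $\mu_{d,m,1}\le\frac{d}{d-2}m(m+d-2)$. Hence $p'_{d,m,1}<\nu$ whenever $m(m+d-2)<(d-2)^3/8$, contrary to your picture of first zeros lying slightly above $\nu$.

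Conversely, your Step~3 undersells itself. For large $d$, this first-zero count alone is roughly $\sqrt{2\pi}\,\tau^{-3}\er^{1/(12\tau^{6})-1}\,w_d\lambda^d$. That exceeds $w_d\lambda^d$ up to $\tau\approx1$, i.e.\ up to $\lambda\approx d^{3/2}$, not just $\lambda\le\kappa d$. The window your plan leaves open is therefore $[\kappa d,\,c\,d^{3/2}]$, whose extent in $\lambda/d$ grows like $\sqrt d$. It cannot be closed by sharpening over a fixed range $\lambda/d\in[\kappa,K]$, nor by a computer check in finitely many dimensions.

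\textbf{The repair (what the paper does).} Work in $\tau$ throughout.

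For large $\tau$ you need no ultraspherical phase analysis. The comparison $p'_{d,m,k}<j'_{\nu,k}$ from Lemma~\ref{lem:pandj} reduces everything to zeros of $J'_\nu$, and the count \eqref{eq:latticecomparison} is already available, so $\delta_\ell=0$. Crude fractional-part bounds do not suffice even there. Since $\lfloor x+\frac34\rfloor=x+\frac14-\psi(x+\frac34)$ with $|\psi|\le\frac12$, the trivial bound loses twice the surplus. The paper instead uses the convexity Lemma~\ref{lem:sumNeu}, which secures exactly $\frac14\kappa_{d,m}$ for every $m<\mathcal{M}$. The price is the boundary-layer tail $\int_{\mathcal{L}}^{\lambda}z^{d-2}G_\lambda(z)\,\dr z$, controlled through Lemmas~\ref{lem:estell} and~\ref{lem:Gbounds}. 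This yields P\'olya for $\tau>\tau_d^*$, with $\tau_d^*=1$ for $d\ge61$ and $\tau_d^*<\frac{28}{25}$ for $d\ge13$.

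For small $\tau$, one counts only first zeros, using $m$-independent radial test functions that are sharp at the turning-point scale. For $d\ge4$, $\rho\equiv1$ covers $\tau\le\frac{17}{20}$. The boundary-layer profile $\rho(x)=x^{\frac{1-d}{2}}(x-1+\frac2d)_+^{3/2}$ covers $[\frac45,\frac{28}{25}]$. The case $d=3$ needs only minor changes. Your suggested $x^m$-type test functions are too weak for this: $\rho(x)=x^m$ gives only $\mu_{d,m,1}\le m(2m+d)$, which is roughly $2m^2$ for $m\gg d$.

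With the two $\tau$-ranges overlapping for $d\ge13$, only $3\le d\le12$ is left to the certified computation.
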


\begin{remark}\label{rem:concurrent}
When the present paper was in the final stages of preparation, an independent proof of 
Theorem \ref{thm:main} appeared in \cite{Li}. While both papers use a similar collection of ideas, their technical implementations are different. 
\end{remark}

The proof  of Theorem \ref{thm:main} further develops the approach introduced in \cite{FLPS}.
The Neumann eigenvalues of a ball are described by the zeros of
derivatives of ultraspherical Bessel functions, with
dimension-dependent harmonic multiplicities. This makes the
higher-dimensional Neumann case substantially more delicate than
both the planar Neumann problem and the higher-dimensional
Dirichlet problem.

For large values of the spectral parameter, we compare these zeros
with zeros of derivatives of the usual Bessel functions and reduce
the problem to estimates for weighted lattice-point sums. The
resulting bounds combine estimates for Bessel phase functions with
combinatorial estimates for the harmonic multiplicities. At low
frequencies, we use the variational characterisation of the first
radial eigenvalue in each angular momentum sector.  This is a particularly novel feature of the argument. 
The remaining finite ranges are handled by a rigorous computer-assisted argument using exact
rational arithmetic and certified enclosures for the relevant
zeros.

\subsection*{Plan of the paper}
\addcontentsline{toc}{subsection}{Plan of the paper} 
The paper is organised as follows. In \S\ref{sec:prelim} we introduce the notation, recall the required facts about ultraspherical Bessel functions and the zeros of their derivatives, and summarise the frequency ranges covered by the different parts of the proof. The next section treats large values of $\lambda$: comparison with the zeros of derivatives of the usual Bessel functions reduces the problem to a weighted lattice-point estimate, which is then established using phase-function bounds and estimates for the harmonic multiplicities. This is followed by the low-frequency analysis in \S \ref{sec:lowlambda}, where we first deal with very small values of $\lambda$ and then develop variational estimates based on constant and dimension-dependent radial test functions. In \S\ref{sec:gap} we complete the proof by a certified computer-assisted verification of the remaining finite ranges. Properties of the function $G_\lambda$ defined by \eqref{eq:Glambda} and the combinatorial estimates used in the argument are collected in Appendices \ref{app:G} and \ref{app:comb}, respectively, while the proofs of several technical statements are deferred to Appendix \ref{app:technical}. Appendix \ref{app:rational} contains details of implementation of a computer-assisted algorithm.

\subsection*{Acknowledgements}  
\addcontentsline{toc}{subsection}{Acknowledgements} 
Research of I.P.\ was partially supported by NSERC and FRQNT.  D.S.\ is grateful for support from the AMS-Simons research enhancement grant 501949-9208.

The authors would like to thank the Isaac Newton Institute for Mathematical Sciences, Cambridge, where some work on this paper was undertaken, for support and hospitality during the programme \emph{Geometric spectral theory and applications}, supported by EPSRC grant EP/Z000580/1. M.L.'s stay at INI was supported by a Simons fellowship.

\subsection*{Data availability statement}  
\addcontentsline{toc}{subsection}{Data availability statement} 

The accompanying \texttt{Mathematica} script and its printout are available for download at \url{https://www.michaellevitin.net/polya.html#neumann}.

\subsection*{AI usage disclosure} 
\addcontentsline{toc}{subsection}{AI usage disclosure} 
The authors acknowledge the use of several models of ChatGPT and Claude for mathematical discussions and editorial assistance. In particular, AI tools contributed to the development of the proofs of Lemma \ref{lem:Gammaratio1} and several technical lemmas in Appendix \ref{app:technical}, as well as to the design and implementation of the computational algorithm in \S 5. All AI-assisted arguments and computations were independently checked by the authors, who take full responsibility for the contents of the paper.

\section{Preliminaries}\label{sec:prelim}

\subsection{Notation and setup}

Throughout the paper, $J_\nu(x)$ denotes a Bessel function of order $\nu\ge 0$, $j_{\nu, k}$, $k\in\mathbb{N}$, is the $k$th positive zero of $J_\nu$, and $j'_{\nu, k}$ is the $k$th positive zero of $\frac{\dr}{\dr x}J_\nu(x)$, with the exception of $j'_{0,1}:=0$, see \cite{Wat, PalApa}. We will additionally denote $j_{\nu, 0}:=0$.

For $d\ge 3$ and $m\in \mathbb{N}_0:=\{0\}\cup\mathbb{N}$, let
\[
P_{d,m}(x):=x^{-\left(\frac{d}{2}-1\right)} J_{m+\frac{d}{2}-1}(x)
\]
denote a \emph{$d$-dimensional ultraspherical Bessel function of order $m$}. We denote by $p'_{d,m,k}$ the $k$th positive zero of its derivative 
\[
 \frac{\dr}{\dr x} P_{d,m}(x) = x^{-\frac{d}{2}}\left(x J'_{m+\frac{d}{2}-1}(x)-\left(\frac{d}{2}-1\right)J_{m+\frac{d}{2}-1}(x)\right) = x^{-\frac{d}{2}} \left(m J_{m+\frac{d}{2}-1}(x)-x J_{m+\frac{d}{2}}(x)\right),
\]
with the exception of $p'_{d,0,1}:=0$.

The Neumann eigenvalues of the $d$-dimensional ball are
\[
\mu_{d,m,k}:=\left(p'_{d,m,k}\right)^2, \qquad m\in\mathbb{N}_0, k\in\mathbb{N}.
\]
Each eigenvalue $\mu_{d,m,k}$ has a multiplicity
\begin{equation}\label{eq:kappadm}
\kappa_{d,m} =\binom{m+d-1}{d-1}-\binom{m+d-3}{d-1},
\end{equation}
which is the dimension of the space of homogeneous harmonic polynomials of degree $m$ in $\mathbb{R}^d$, or of the space
\[
\mathcal{H}_{d,m}:=\left\{\chi\in C^\infty\left(\mathbb{S}^{d-1}\right): -\Delta_{\mathbb{S}^{d-1}} \chi= m(m+d-2) \chi\right\}
\]
of spherical harmonics of degree $m$ on $\mathbb{S}^{d-1}$.

Thus,
\begin{equation}\label{eq:defofneumcount}
\mathcal N^\Neu_{\mathbb B^d}(\lambda) = \sum_{m=0}^{\infty}\kappa_{d,m}\#\left\{k\in\mathbb N\ :\ p'_{d,m,k}\le\lambda\right\}.
\end{equation}

\subsection{Properties of ultraspherical Bessel functions and zeros of their derivatives}

We start with

\begin{lemma}\label{lem:trans}
Let $d\ge 3$, $m\in\mathbb{N}_0$, and $k\in\mathbb{N}$. Every zero $p'_{d,m,k}$ is simple, and, except for $p'_{d,0,1}=0$, a transcendental number. 
\end{lemma}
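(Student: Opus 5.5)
Set $\nu:=m+\frac d2-1\ge\frac12$ and $a:=\frac d2-1\ge\frac12$. By the formula for $\frac{\dr}{\dr x}P_{d,m}$, the positive zeros $p'_{d,m,k}$ are exactly the positive zeros of
\[
F(x):=xJ_\nu'(x)-aJ_\nu(x).
\]
The plan has two independent parts.

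\textbf{Simplicity.} I would use the Bessel equation $x^2J_\nu''+xJ_\nu'+(x^2-\nu^2)J_\nu=0$. Differentiating $F$ gives
\[
F'(x)=(1-a)J_\nu'(x)+xJ_\nu''(x)=-aJ_\nu'(x)-\frac{x^2-\nu^2}{x}J_\nu(x).
\]
Suppose $x_0>0$ satisfies $F(x_0)=F'(x_0)=0$. From $F(x_0)=0$ we get $x_0J_\nu'(x_0)=aJ_\nu(x_0)$, and substituting into $F'(x_0)=0$ gives
\[
J_\nu(x_0)\bigl(a^2+x_0^2-\nu^2\bigr)=0.
\]
If $J_\nu(x_0)=0$, then $F(x_0)=0$ forces $J_\nu'(x_0)=0$, which is impossible because the positive zeros of $J_\nu$ are simple. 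Otherwise $x_0^2=\nu^2-a^2$. This does not yet give a contradiction, since $\nu>a$ when $m\ge1$. To close this case I would use the standard fact that $P_{d,m}$, and hence $F$, has no zeros in $(0,\sqrt{\nu^2-a^2}\,]$. The cleanest route is to note that $u=P_{d,m}$ solves
\[
(x^{d-1}u')'+x^{d-1}\Bigl(1-\frac{m(m+d-2)}{x^2}\Bigr)u=0,
\]
and that $\nu^2-a^2=m(m+d-2)$. On the interval where the coefficient $1-\frac{m(m+d-2)}{x^2}$ is nonpositive, $u$ starts positive and increasing near $0$ (it behaves like $cx^m$), so a maximum principle argument shows $u'>0$ on $(0,\sqrt{m(m+d-2)}\,]$. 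Hence $F\ne0$ there, and this case cannot occur. For $m=0$ the bad point is $x_0=0$, which is excluded. This settles simplicity, since a double zero of $P'_{d,m}$ must also satisfy the Sturm--Liouville equation and so reduces to the same computation.

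\textbf{Transcendence.} Suppose, for contradiction, that $x_0\ne0$ is algebraic with $x_0J_\nu'(x_0)=aJ_\nu(x_0)$. There are two cases.

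\emph{$d$ even.} Then $\nu$ is an integer and Siegel's theorem applies: for algebraic $\nu$ and algebraic $x_0\ne0$, the numbers $J_\nu(x_0)$ and $J_\nu'(x_0)$ are algebraically independent over $\overline{\mathbb Q}$. Since $a$ and $x_0$ are algebraic, the relation $x_0J_\nu'-aJ_\nu=0$ is a nontrivial algebraic relation, a contradiction. Note that $\nu\notin\frac12+\mathbb Z$ in this case.

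\emph{$d$ odd.} Then $\nu$ is a half-integer and $J_\nu$ is elementary:
\[
J_\nu(x)=\sqrt{\tfrac{2}{\pi x}}\,\bigl(A(1/x)\sin x+B(1/x)\cos x\bigr)
\]
with $A,B$ polynomials with rational coefficients. The equation $F(x_0)=0$ then becomes
\[
R(x_0)\sin x_0+S(x_0)\cos x_0=0,
\]
with $R,S$ rational functions over $\mathbb Q$. So either $\tan x_0=-S(x_0)/R(x_0)$ is algebraic, or $R(x_0)=S(x_0)=0$. The first option contradicts Lindemann--Weierstrass, because $e^{2ix_0}$ would be algebraic. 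For the second option, $R(x_0)=S(x_0)=0$ would make $J_\nu$ and $J_\nu'$ vanish simultaneously at $x_0$. This contradicts the Wronskian, or simplicity: both $\sin$ and $\cos$ coefficients of $J_\nu$ and $J_\nu'$ vanishing is impossible since $(J_\nu,Y_\nu)$ have nonzero Wronskian.

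\textbf{Main obstacle.} The delicate points are the exceptional point $x_0^2=\nu^2-a^2$ in the simplicity argument, and making the odd-$d$ case rigorous, in particular ruling out a common vanishing of $R$ and $S$. For the latter I would use that $Y_\nu$ swaps the roles of $\sin$ and $\cos$, so that $R(x_0)=S(x_0)=0$ would give $J_\nu=J_\nu'=0$ at $x_0$.
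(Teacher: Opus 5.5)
Your argument is correct in substance, and it takes a different route from the paper, which gives no proof of its own: it defers to Lorch--Muldoon \cite{LoMu95}, who extend Siegel's classical transcendence result for Bessel zeros. You give a self-contained reconstruction instead. Simplicity comes from the Bessel equation: a double positive zero must sit at $x_0^2=\nu^2-a^2=m(m+d-2)$, and that point is excluded by the Lorch--Szeg\H{o} bound \eqref{eq:LS}. You reprove that bound by integrating $(x^{d-1}u')'=-x^{d-1}qu$ from $0$, which works, though you could simply cite \eqref{eq:LS}. Transcendence is split by the parity of $d$. For integer $\nu$ you use Siegel's algebraic independence of $J_\nu(x_0)$ and $J'_\nu(x_0)$. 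For $\nu\in\frac12+\mathbb{Z}$ you use the elementary form of $J_\nu$ together with Hermite--Lindemann. What your route buys is transparency; in particular it makes explicit that odd $d$ falls outside Siegel's theorem and needs its own argument. The citation buys brevity.

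A few repairs are needed.

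(i) Siegel's theorem is for rational $\nu\notin\frac12+\mathbb{Z}$, not for arbitrary algebraic $\nu$. This is harmless, since you only use integer $\nu$.

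(ii) In the odd case your dichotomy misses the subcase $R(x_0)=0\neq S(x_0)$. It is cleaner to argue directly: whenever $(R(x_0),S(x_0))\neq(0,0)$, the real relation $R(x_0)\sin x_0+S(x_0)\cos x_0=0$ gives $e^{2ix_0}=\frac{R(x_0)-iS(x_0)}{R(x_0)+iS(x_0)}\in\overline{\mathbb{Q}}$. This contradicts Hermite--Lindemann.

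(iii) $R(x_0)=S(x_0)=0$ does not give $J_\nu(x_0)=J'_\nu(x_0)=0$. Your $Y_\nu$ observation gives something else. The substitution $\sin x\mapsto-\cos x$, $\cos x\mapsto\sin x$ commutes with $\frac{\dr}{\dr x}$ and with multiplication by rational functions, and it maps $J_\nu$ to $Y_\nu$. So from $R(x_0)=S(x_0)=0$ you get $x_0Y'_\nu(x_0)-aY_\nu(x_0)=0$ alongside $x_0J'_\nu(x_0)-aJ_\nu(x_0)=0$. Hence the vectors $(J_\nu,J'_\nu)(x_0)$ and $(Y_\nu,Y'_\nu)(x_0)$ both lie on the line $x_0w=av$. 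That contradicts $W(J_\nu,Y_\nu)(x_0)=\frac{2}{\pi x_0}\neq0$.

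Finally, if ``simple'' is also meant to cover the exceptional zero $p'_{d,0,1}=0$, note that $P_{d,0}$ is even with $P''_{d,0}(0)=-2^{-d/2}/\Gamma\left(\frac d2+1\right)\neq0$.
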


The proof can be found in \cite{LoMu95}: it extends the classical result of Siegel for the zeros of Bessel functions.

The following are the standard bounds for the zeros of derivatives of ultraspherical Bessel functions, see \cite{Spigler}.

\begin{lemma}\label{lem:pandj} 
Let $d\ge 3$, $m\in\mathbb{N}_0$ and $k\in\mathbb{N}$. 
\begin{itemize}
\item If $m=0$, then $p'_{d,0,k}=j_{\frac{d}{2},k-1}$.
\item If $m>0$, then
\[
p'_{d,m,k}\in \left(j_{m+\frac{d}{2}-1,k-1}, j_{m+\frac{d}{2}-1,k}\right)
\]
with $\operatorname{sign} P'_{d,m}\left(j_{\nu,k}\right)=(-1)^k$. 
\item
\begin{equation}\label{eq:uvsj}
p'_{d,m,k}< j'_{m+\frac{d}{2}-1,k}\qquad\text{for all }k\in\mathbb{N}.
\end{equation}
\end{itemize}
\end{lemma}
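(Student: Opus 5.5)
With $\nu:=m+\frac d2-1\ge\frac12$, the plan is to work entirely with
\[
F(x):=x^{d/2}P'_{d,m}(x)=xJ'_\nu(x)-\left(\tfrac d2-1\right)J_\nu(x)=mJ_\nu(x)-xJ_{\nu+1}(x),
\]
whose positive zeros are exactly the $p'_{d,m,k}$ (apart from the convention $p'_{d,0,1}=0$).

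\textbf{The case $m=0$.} Here $F(x)=-xJ_{d/2}(x)$. Its positive zeros are therefore $j_{d/2,1},j_{d/2,2},\dots$. Prepending the conventional zero $p'_{d,0,1}=0=j_{d/2,0}$ shifts the index by one and gives $p'_{d,0,k}=j_{\frac d2,k-1}$.

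\textbf{The case $m>0$: interlacing.} At a zero $j_{\nu,k}$ of $J_\nu$ we have $F(j_{\nu,k})=j_{\nu,k}J'_\nu(j_{\nu,k})$. The zeros of $J_\nu$ are simple, and $J_\nu>0$ on $(0,j_{\nu,1})$, so $\operatorname{sign}J'_\nu(j_{\nu,k})=(-1)^k$; hence $\operatorname{sign}P'_{d,m}(j_{\nu,k})=(-1)^k$. Near $0$ we have $F(x)\sim (m-\frac d2+1+\frac d2-1)c\,x^\nu=mc\,x^\nu>0$ with $c>0$, which matches the sign $(-1)^0$ at $j_{\nu,0}=0$. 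The intermediate value theorem then gives at least one zero of $F$ in each interval $(j_{\nu,k-1},j_{\nu,k})$.

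To show there is exactly one, I would use a Sturm-type argument. Between consecutive zeros of $J_\nu$, write $F/J_\nu = x\,J'_\nu/J_\nu-(\frac d2-1)$. By the Riccati form of Bessel's equation, $\frac{\dr}{\dr x}\bigl(xJ_\nu'/J_\nu\bigr)=-x-\frac{(xJ'_\nu)^2-\nu^2J_\nu^2}{xJ_\nu^2}$, i.e. with $y=xJ'_\nu/J_\nu$ one has $y'=(\nu^2-x^2-y^2)/x$. At any point where $y=\frac d2-1<\nu$ we get
\[
y'=\frac{\nu^2-(\tfrac d2-1)^2-x^2}{x}.
\]
This is not sign-definite, so monotonicity of $y$ alone does not settle the count. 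I would instead count zeros globally. On $(0,j_{\nu,1})$, $y$ decreases from $\nu$ to $-\infty$, and the crossing value $\frac d2-1$ is hit at some $x<j'_{\nu,1}$, where $y>0$. On each later interval $y$ runs from $+\infty$ to $-\infty$. The equation $y=c$ then has exactly one root, because at any root the level-crossing direction is forced: the derivative $(\nu^2-c^2-x^2)/x$ has the same sign at every root in a given interval, once one shows $x^2>\nu^2-c^2$ there. That holds since $x>j_{\nu,1}>\nu$ for $k\ge2$, and in the first interval it holds by the argument just given. A function crossing a level always in the same direction crosses it at most once, which gives exactly one zero per interval and hence the enclosure with the stated indexing.

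\textbf{Inequality \eqref{eq:uvsj}.} The function $J'_\nu$ has exactly one zero $j'_{\nu,k}$ in $(j_{\nu,k-1},j_{\nu,k})$, where $y=0$. Since $y$ is decreasing through the level $c=\frac d2-1\ge\frac12>0$, and starts at $+\infty$ (or at $\nu>c$ when $k=1$), it reaches $c$ before it reaches $0$. Hence $p'_{d,m,k}<j'_{\nu,k}$. For $m=0$ the same conclusion follows from $j_{\frac d2,k-1}<j'_{\frac d2-1,k}$, a classical interlacing: since $J_{d/2}=-J'_{\nu}+\frac{\nu}{x}J_\nu$, the zeros of $J_{\nu+1}$ sit where $y=\nu$, which is above $0$.

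\textbf{Main obstacle.} The delicate point is the uniqueness in the first interval for small $x$. There the sign of $\nu^2-c^2-x^2$ may be positive, so the monotonicity argument must be replaced by the explicit observation that $y$ starts at $\nu>c$ and decreases on $(0,x_*)$ with $x_*=\sqrt{\nu^2-c^2}$. Concretely, one needs to know that $y$ cannot return up to the level $c$ before $j_{\nu,1}$; I would settle this with the known monotonic decrease of $xJ'_\nu/J_\nu$ on $(0,j_{\nu,1})$.
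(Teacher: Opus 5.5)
Your argument is correct, but the paper offers no proof to compare it with step by step. It calls these standard bounds and cites \cite{Spigler}, and \cite{LoMu95} for simplicity of the zeros.

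You instead give a self-contained Sturm--Riccati proof:
\begin{itemize}
\item The endpoint signs $F(j_{\nu,k})=j_{\nu,k}J'_\nu(j_{\nu,k})$, together with $F\sim mc\,x^\nu$ near $0$, give existence of a zero in each $(j_{\nu,k-1},j_{\nu,k})$.
\item The order in which $y=xJ'_\nu/J_\nu$ crosses the levels $\frac d2-1$ and $0$ (for $m=0$, the levels $\nu$ and $0$) gives uniqueness, the indexing, and \eqref{eq:uvsj}.
\end{itemize}

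The point you single out as the main obstacle is not really one. You also need not import the monotonicity of $xJ'_\nu/J_\nu$ on $(0,j_{\nu,1})$ from the literature. Differentiating $y'=(\nu^2-x^2-y^2)/x$ shows that $y''=-2$ at every point where $y'=0$. Hence $y'$ can only change sign from positive to negative. Near the left end of each interval $y'<0$: $y'\approx -x/(\nu+1)$ as $x\to0^+$, and $y'\to-\infty$ as $x\to j_{\nu,k-1}^+$. It follows that $y'<0$ on every interval between consecutive zeros of $J_\nu$. Alternatively, you can read this off the Mittag-Leffler expansion $y=\nu-2\sum_{n}x^2/(j_{\nu,n}^2-x^2)$.

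Strict monotonicity of $y$ on each interval makes your level-crossing-direction argument unnecessary and gives all three items at once.

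The citation buys brevity. Your version gives an elementary, transparent proof of exactly the interlacing and sign facts that the certified algorithm of \S\ref{sec:gap} relies on.
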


For additional properties of ultraspherical Bessel functions and zeros of their derivatives we refer to \cite{IS88, LorSze94, L99, FLPSbessel, Jiang}. In particular, we mention the lower bound \cite{LorSze94},
\begin{equation}\label{eq:LS}
p'_{d,m,1}>\sqrt{m(m+d-2)}, \qquad m\ge 1.
\end{equation}

The standard variational principle for the Neumann Laplacian \cite{LMP}, 
\[
\mu_k(\Omega) = \min_{\substack{\mathcal{S}\subset H^1(\Omega)\\\dim\mathcal{S}=k}} \max_{u\in\mathcal{S}\setminus\{0\}} \frac{\|\nabla u\|^2_{L^2(\Omega)}}{\|u\|^2_{L^2(\Omega)}},
\]
applied to $\Omega=\mathbb{B}^d$ and restricted to test functions $u=\rho\chi$ with $\rho\in H^1\left((0,1), x^{d-1}\,\dr x\right)$, $\chi\in\mathcal{H}_{d,m}$,  gives, for any such function $\rho$ (which may depend on $d$ and/or $m$), 
\begin{equation}\label{eq:varpr}
\mu_{d,m,1}=(p'_{d,m,1})^2 \le A_d(m):= \frac{\int_0^1\left(|\rho'(x)|^2 x^{d-1}+m(m+d-2)|\rho(x)|^2x^{d-3}\right)\, \dr x}{\int_0^1|\rho(x)|^2x^{d-1}\, \dr x}.
\end{equation}
We will use this variational principle extensively, with different test functions.

\subsection{An illustration of  main results}
As mentioned in the Introduction, our proof of Theorem \ref{thm:main} splits into three main parts. It will be often convenient to introduce a new parameter $\tau$ via
\begin{equation}\label{eq:lambdatau}
\lambda = \tau^3 d^{3/2}.
\end{equation}
We therefore intend to prove that P\'olya's inequality
\begin{equation}\label{eq:Polyatau}
\mathcal{N}^\Neu_{\ball{d}}\left(\tau^3 d^{3/2}\right)-w_d \tau^{3d} d^{3d/2}> 0
\end{equation}
holds for all $d\ge 3$ and all $\tau>0$.

Set 
\[
\tau^*:=\frac{28}{25}.
\]
In \S\ref{sec:largelambdas}, we show (Theorem \ref{thm:A}) the existence of a non-increasing sequence $\{\tau_d^*\}$ witht $\tau_d^\ge 1$ for all $d$,  $\tau_d^*=1$ for $d\ge 61$, and $\tau_d^*<\tau^*$ for $d\ge 13$, 
such that \eqref{eq:Polyatau} holds for all $\tau>\tau_d^*$, see Figure \ref{fig:totalpictsmall}.

In \S\ref{sec:lowlambda}, we show (Theorem \ref{thm:low}) that for all $d\ge 3$,  \eqref{eq:Polyatau} holds for all $0\le \tau\le \tau^*$.

\begin{figure}[htb]
\centering
\includegraphics{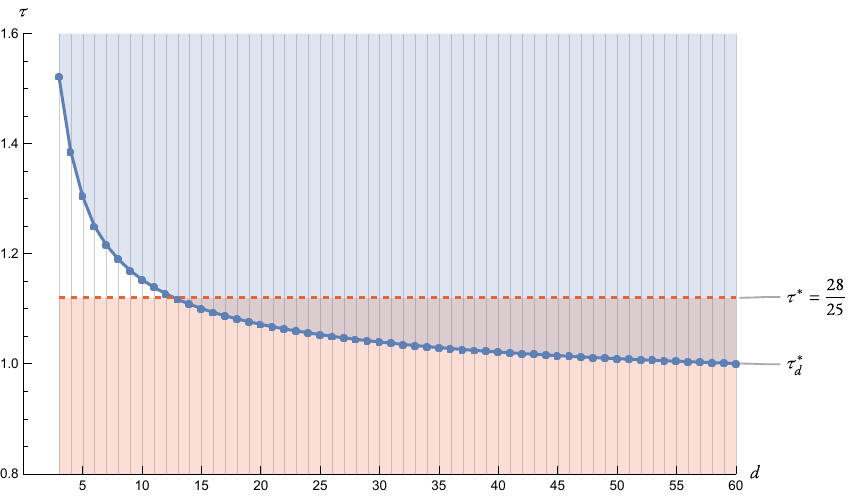}
\caption{Summary of analytic results in $(d,\tau)$-range.\label{fig:totalpictsmall}}
\end{figure}

The combination of these two results leaves a gap,  for $3\le d \le 12$, namely $\lambda\in\left({\tau^*}^3 d^{3/2}, {\tau_d^*}^3 d^{3/2}\right]$, see Figure \ref{fig:lambdagap}. We fill this gap in \S\ref{sec:gap} by using a rigorous computer-assisted algorithm to show that  inequality \eqref{eq:Polyan} holds, in these dimensions, for all $n$ such that $\mu_n\left(\mathbb{B}^d\right)\le {\Lambda_{d}^*}^2$, where $\Lambda_{d}^*:=\ceiling{{\tau_d^*}^3 d^{3/2}}$.

\begin{figure}[htb]
\centering
\includegraphics{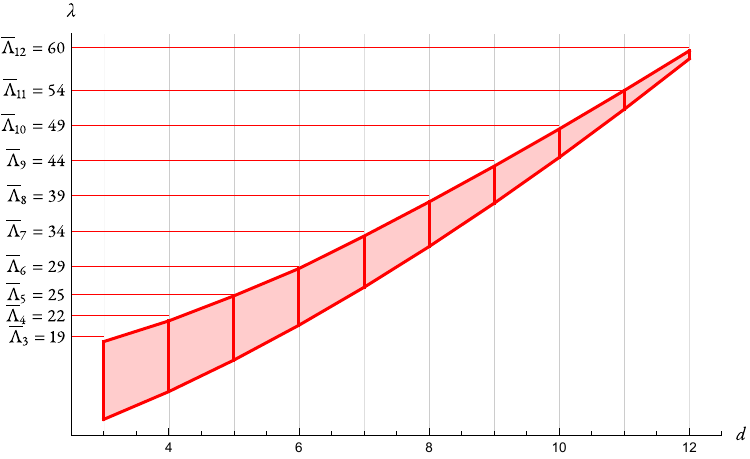}
\caption{Remaining $\lambda$ gaps.\label{fig:lambdagap}}
\end{figure}

\section{Large $\lambda$ via comparison with zeros of Bessel derivatives}\label{sec:largelambdas}

\subsection{Main result and approach}

As a consequence of \eqref{eq:uvsj} and \eqref{eq:defofneumcount},
\begin{equation}\label{eq:modneumcount}
\mathcal N^\Neu_{\mathbb B^d}(\lambda)\ge \sum_{m=0}^{\infty}\kappa_{d,m}\#\left\{k\in\mathbb N: j'_{m+\frac{d}{2}-1, k}\le\lambda\right\} =: \utilde{\mathcal{N}}^\Neu_{\mathbb B^d}(\lambda)
\end{equation}
This allows us to relate the eigenvalue counting functions to a lattice point count. Specifically, due to \cite[Theorem 1.10]{FLPSbessel}, see also \cite[Proposition 3.1]{FLPS}, we have
\begin{equation}\label{eq:latticecomparison}
\utilde{\mathcal N}^\Neu_{\mathbb B^d}(\lambda)\ge \sum_{m=0}^{\entire{\lambda-\frac{d}{2}+1}}\kappa_{d,m}\entire{G_\lambda\left(m+\frac{d}{2}-1\right)+\frac{3}{4}} =: \Pt_d^\Neu(\lambda),
\end{equation}
where 
\begin{equation}\label{eq:Glambda}
G_\lambda(z):=\frac{1}{\pi}\left(\sqrt{\lambda^2-z^2}-z\arccos\frac{z}{\lambda}\right),\qquad z\in[0,\lambda],
\end{equation} 
extended by zero for $z>\lambda$, is a function originally introduced in \cite{kufe}; see Appendix \ref{app:G} for its various properties.

In this section we show analytically that for large $\lambda$,
\[
\Pt^\Neu_d(\lambda) \ge w_d\lambda^d.
\]
It will be convenient to set
\begin{equation}\label{eq:Rd}
\mathcal{R}_d =  \sqrt{\frac{2}{d}} \frac{\Gamma\left(\frac{d}{2}+1\right)}{\Gamma\left(\frac{d}{2}+\frac{1}{2}\right)},
\end{equation}
and
\begin{equation}\label{eq:Q}
\mathcal{Q}_d(\lambda):=\frac{1}{\mathcal{R}_d w_d  \lambda^d}\left({\Pt^\Neu_d(\lambda) - w_d \lambda^d}\right),
\end{equation}
and to prove that $\mathcal{Q}_d(\lambda)>0$.

Specifically, we prove
\begin{theorem}\label{thm:A} 
Let $d\ge 3$.  Using notation \eqref{eq:lambdatau}, 
assume that 
\begin{equation}\label{eq:tau1}
\tau\ge 1.
\end{equation}
Then 
\begin{equation}\label{eq:Tineq}
\tau^3\mathcal{Q}_d\left(\tau^3 d^{3/2}\right) \ge \mathcal{T}_d(\tau),
\end{equation}
where the function $\mathcal{T}_d(\tau)$ is monotone increasing in both $\tau$ and $d$. Therefore, we have two possibilities.
\begin{enumerate}[\rm(i)]
\item If $\mathcal{T}_d(1)\le 0$, then the function $\mathcal{T}_d(\tau)$ has a unique positive root $\tau^*_d\ge 1$, and $\mathcal{T}_d(\tau)>0$ for $\tau>\tau^*_d$. Moreover, $\tau^*_d$ is monotone decreasing in $d$. This situation occurs if $3\le d\le 60$, see Figure \ref{fig:totalpictsmall}.
\item If $\mathcal{T}_d(1)> 0$, then $\mathcal{T}_d(\tau)>0$ for $\tau\ge 1$. This happens for all $d\ge 61$, in which case we set $\tau_d^*:=1$.
\end{enumerate}
Additionally, if $d\ge 13$, then $\tau^*_d<\frac{28}{25}=:\tau^*$.

Therefore, P\'olya's inequality \eqref{eq:Polyatau} holds for all $\tau>\tau^*_d$.
\end{theorem}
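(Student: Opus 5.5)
Since $\Pt^\Neu_d(\lambda)\le \mathcal N^\Neu_{\mathbb B^d}(\lambda)$ by \eqref{eq:modneumcount} and \eqref{eq:latticecomparison}, it suffices to bound $\Pt^\Neu_d(\lambda)-w_d\lambda^d$ from below. I would first write $\entire{G_\lambda(\nu)+\frac34}\ge G_\lambda(\nu)+\frac14-\{G_\lambda(\nu)+\frac34\}$ and $\{\cdot\}<1$. This gives
\[
\Pt^\Neu_d(\lambda)\ge \sum_{m=0}^{M}\kappa_{d,m}\Bigl(G_\lambda\bigl(m+\tfrac d2-1\bigr)-\tfrac14\Bigr)
\]
plus a correction for the terms where the floor is unfavourable, with $M=\entire{\lambda-\frac d2+1}$. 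This is too lossy as it stands, so I would keep the $+\frac14$ on average.

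The main term: replace the sum over $m$ by an integral. For $\nu=m+\frac d2-1$, $\kappa_{d,m}$ is a polynomial in $\nu$ of degree $d-2$ with leading coefficient $2\nu^{d-2}/(d-2)!$, and I would bound it below by an explicit expression (Appendix \ref{app:comb}). Using the identity $\int_0^\lambda G_\lambda(z)\,\frac{2z^{d-2}}{(d-2)!}\,\dr z=w_d\lambda^d$, which follows by the Beta integral from $G_\lambda'(z)=-\frac1\pi\arccos(z/\lambda)$, the integral reproduces the Weyl term $w_d\lambda^d$.

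The second-order term comes from three contributions: the $+\frac14$ average of the floor, the Euler--Maclaurin boundary term at $z=\frac d2-1$ (the sum starts at $\nu=\frac d2-1$, not at $0$), and the difference between $\kappa_{d,m}$ and its leading monomial. Together these should give a positive multiple of $\lambda^{d-1}$, of size comparable to $\mathcal R_d w_d\lambda^d\cdot\lambda^{-1}d^{3/2}$. The normalisation by $\mathcal R_d w_d\lambda^d$ in \eqref{eq:Q} is chosen for exactly this reason; note $\int_0^\lambda\frac{z^{d-2}}{(d-2)!}\,\dr z$ relates to $w_d\lambda^{d-1}$ through ratios of Gamma functions, and $\mathcal R_d$ is one of these. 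Hence after multiplying by $\tau^3=\lambda d^{-3/2}$ one expects $\tau^3\mathcal Q_d\ge c_1(d)-c_2(d)\tau^{-\alpha}$, with $c_1$ positive and bounded below and $c_2$ decaying in $d$.

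The error terms to control are:
\begin{enumerate}[(a)]
\item the Euler--Maclaurin remainders, controlled via the monotonicity and concavity properties of $G_\lambda$ and of $z\mapsto G_\lambda(z)z^{d-2}$ (Appendix \ref{app:G});
\item the deficit $\sum\kappa_{d,m}(\{G_\lambda(\nu)+\frac34\}-\frac12)$, which is a weighted lattice-point discrepancy.
\end{enumerate}
For (b) I would use a van der Corput-type or second-derivative estimate. The phase function satisfies $G_\lambda''(z)=\frac{1}{\pi\sqrt{\lambda^2-z^2}}$, so the sum of sawtooth values over a dyadic block of length $L$ is $O(L\lambda^{-1/2}+\lambda^{1/2})$, uniformly with explicit constants. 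Applying Abel summation with the monotone weights $\kappa_{d,m}$ gives an error $O(\lambda^{d-2}\cdot\lambda^{2/3})$, i.e.\ relative size $\lambda^{-1/3}$ against the second term. With $\lambda=\tau^3d^{3/2}$ this yields the explicit, monotone-in-$\tau$ negative correction, which defines $\mathcal T_d(\tau)$. The cube in \eqref{eq:lambdatau} matches this exponent $1/3$.

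Monotonicity of $\mathcal T_d$ in $\tau$ then follows from its form $a_d-b_d\tau^{-\beta}$ with $b_d>0$. Monotonicity in $d$ requires the Gamma-ratio bounds, and I expect Lemma \ref{lem:Gammaratio1} to enter here. Parts (i) and (ii) are then immediate: when $\mathcal T_d(1)\le0$ there is a unique root, and $\tau^*_d$ decreases in $d$ because $\mathcal T_d$ increases in $d$. The numerical claims (the threshold $d=61$ and $\tau^*_d<\frac{28}{25}$ for $d\ge13$) reduce by monotonicity in $d$ to checking the signs of $\mathcal T_{61}(1)$ and $\mathcal T_{13}(\frac{28}{25})$ in exact arithmetic. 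The main obstacle is step (b): getting explicit, dimension-uniform constants in the weighted discrepancy estimate that are small enough for the thresholds to hold, especially near the turning point $z\approx\lambda$ where $G_\lambda''$ blows up. There I would cut off at $\lambda-\lambda^{1/3}$ and bound the tail trivially.
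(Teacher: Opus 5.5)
\textbf{There is a genuine gap in step (b).} Several parts of your plan match the paper: reducing to $\Pt^\Neu_d$, obtaining the Weyl term from Lemma \ref{lem:GintW}, and deducing (i), (ii) and the thresholds from monotonicity plus the sign checks of $\mathcal T_{61}(1)$ and $\mathcal T_{13}(\frac{28}{25})$. The weighted-discrepancy step (b), however, fails in the regime the theorem is about.

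Your size count drops the factorials. The weights are $\kappa_{d,m}\approx\frac{2\lambda^{d-2}}{(d-2)!}$, whereas the positive second-order term is only $\frac14\sum_{m<\mathcal M}\kappa_{d,m}\approx\frac{\lambda^{d-1}}{2(d-1)!}$ (this is exactly the Weyl boundary term for the ball). An Abel-summed bound $C\lambda^{2/3}\max_m\kappa_{d,m}$ therefore has relative size about $4C(d-1)\lambda^{-1/3}\approx 4C\sqrt d/\tau$ at $\lambda=\tau^3d^{3/2}$, not $\lambda^{-1/3}$. This is of order one at $\tau\approx1$ and grows with $d$. Consequently the $\mathcal T_d$ you would obtain can be neither increasing in $d$ nor positive at $\tau=1$ for large $d$. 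The route only controls $\lambda\gtrsim d^3$, leaving gaps in infinitely many dimensions that no finite computation closes.

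A finer dyadic treatment near the turning point might make the loss bounded in $d$. It would still carry explicit van der Corput-type constants, and there is essentially no room for them. The paper's bound, which has no discrepancy loss at all outside the turning-point layer, is only barely positive at the threshold: $\mathcal T_{60}(1)\le0<\mathcal T_{61}(1)$. Relatedly, the $\tau^3$ in \eqref{eq:lambdatau} does not come from a discrepancy exponent. It makes $d\lambda^{-2/3}=\tau^{-2}$, which is the ratio of the turning-point layer width $\sim\lambda^{1/3}$ to the scale $\lambda/d$ on which the weights $z^{d-2}$ decay.

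\textbf{The missing idea is that no discrepancy estimate is needed.} Since $G_\lambda$ is decreasing, convex and $\mathrm{Lip}_{1/2}$ (Lemma \ref{lem:Gdiff}), Lemma \ref{lem:sumNeu} gives the deterministic one-sided bound $\sum_{m\ge n}\entire{G_\lambda(m+\frac d2-1)+\frac34}\ge\int_{n+\frac d2-1}^{\mathcal L}G_\lambda(t)\,\dr t+\frac{\mathcal M-n}{4}$. This keeps the full $+\frac14$ per term up to $\mathcal M$ with no error.

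The rest of the paper's argument then runs as follows.
\begin{enumerate}[(1)]
\item Writing $\kappa_{d,m}$ as a sum of the nonnegative increments $\kappa_{d,n}-\kappa_{d,n-1}$ transfers this bound to the weighted sum.
\item Lemma \ref{lem:prodint}, applied with the polynomial minorant $F^\Neu_d\ge\frac{2}{(d-1)!}S_{d,+}$ (Lemma \ref{lem:FdNbound}), together with Lemma \ref{lem:GintW}, yields $w_d\lambda^d-\frac12w_{d-1}\lambda^{d-1}-\frac{(d-6)_+}{24}w_{d-2}\lambda^{d-2}$ exactly.
\item The only losses are the tail $\int_{\mathcal L}^\lambda z^{d-2}G_\lambda$ and the truncation of $\frac14\sum\kappa_{d,n}$ at $\mathcal M$. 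By Lemmas \ref{lem:Gbounds} and \ref{lem:estell}, both are confined to the layer $[\lambda-\omega\lambda^{1/3},\lambda]$, and both cost factors governed by $d\Theta=\omega\tau^{-2}$, uniformly in $d$.
\end{enumerate}

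Finally, the resulting $\mathcal T_d=-\mathcal T_{d,1}-\mathcal T_{d,2}+\mathcal T_{d,3}$ is not of the form $a_d-b_d\tau^{-\beta}$. For instance, $\mathcal T_{d,3}$ contains $\er^{-\omega\tau^{-2}}$, and $\mathcal T_{d,2}$ has a negative $\tau^{-4}$ coefficient. Its monotonicity in $\tau$ and $d$ therefore has to be proved piece by piece, as in Appendix \ref{app:technical}.
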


Some typical plots of functions $\mathcal{T}_d(\tau)$ are shown in Figure \ref{fig:Tdtypical}.

\begin{figure}[htb]
\centering
\includegraphics{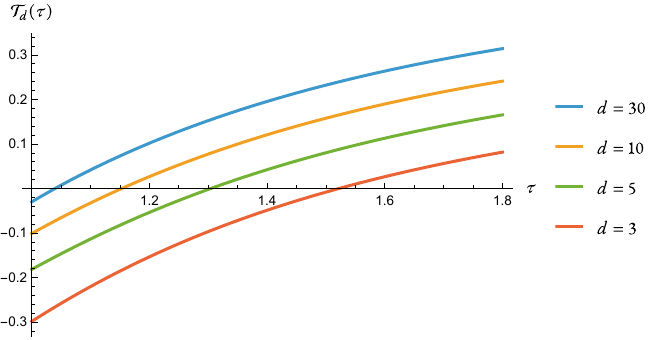}
\caption{$\mathcal{T}_d(\tau)$  as a function of $\tau$ for some values of $d$.\label{fig:Tdtypical}}
\end{figure}

We will prove Theorem \ref{thm:A} via a sequence of statements. 
We define
\begin{equation}\label{eq:defofm0}
\mathcal{M}=\mathcal{M}_{d}(\lambda):=
\min\left\{m\in\mathbb{N}_0: G_{\lambda}\left(m+\frac{d}{2}-1\right)<\frac{1}{4}\right\},
\end{equation}
and 
\begin{equation}\label{eq:defofell0}
\mathcal{L}=\mathcal{L}_{d}(\lambda):=\mathcal{M}_{d}(\lambda)+\frac{d}{2}-1,\qquad\text{with }G_{\lambda}\left(\mathcal{L}\right)<\frac{1}{4}.
\end{equation}
We also define the functions
\begin{equation}\label{eq:Sd}
S_d(z):=z^{d-3}\left(z^2-\frac{d-1}{2}z- \frac{(d-1)(d-2)(d-6)}{24}\right)
\end{equation}
and
\begin{equation}\label{eq:Sdp}
S_{d,+}(z):=z^{d-3}\left(z^2-\frac{d-1}{2}z- \frac{(d-1)(d-2)(d-6)_+}{24}\right),
\end{equation}
where $n_+:=\max\{n,0\}$, $n\in\mathbb{N}$.

The first step is
\begin{prop}\label{prop:Qbound123}
\[
\mathcal{Q}_d(\lambda)\ge -Q_{d,1}(\lambda) - Q_{d,2}(\lambda) + Q_{d,3}(\lambda),
\]
where
\begin{align}
Q_{d,1}(\lambda)&:=\frac{d\mathcal{R}_d}{2} \lambda^{-1} + \frac{d^2(d-6)_+}{24\mathcal{R}_d}  \lambda^{-2},\label{eq:Q1}\\
Q_{d,2}(\lambda)&:=\frac{\sqrt{2 \pi } d^{5/2}}{\lambda^d}\int_{\mathcal{L}}^{\lambda} z^{d-2} G_{\lambda}(z)\, \dr z,\label{eq:Q2}\\
Q_{d,3}(\lambda)&:=\frac{1}{4\mathcal{R}_d w_d  \lambda^d}\,\sum_{n=0}^{\mathcal{M}-1}\kappa_{d,n}.\label{eq:Q3}
\end{align}
\end{prop}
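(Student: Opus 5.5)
The plan is to bound $\Pt^\Neu_d(\lambda)$ from below termwise, turn the resulting sum into an integral of $G_\lambda$ against a polynomial weight, and compare that integral with the Weyl term $w_d\lambda^d$. I have not verified that the constants in \eqref{eq:Q1}--\eqref{eq:Q3} come out exactly as stated; the steps most likely to need adjustment are flagged below.

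\textbf{Step 1: termwise bound.} Write $\ell=\ell_m:=m+\frac d2-1$. For $m\ge\mathcal M$ the summand in $\Pt^\Neu_d$ is nonnegative and is discarded. For $m<\mathcal M$ we have $G_\lambda(\ell_m)\ge\frac14$, and I would use
\[
\entire{G_\lambda(\ell)+\tfrac34}\ \ge\ G_\lambda(\ell)+\tfrac14-\bigl(\{G_\lambda(\ell)+\tfrac34\}-\tfrac12\bigr),
\]
that is, the floor written as $x-\frac14$ plus a sawtooth remainder. The $+\frac14$ portion, summed over $m<\mathcal M$ and divided by $\mathcal R_dw_d\lambda^d$, is exactly $Q_{d,3}$. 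The sawtooth remainder must then be absorbed together with a discretisation gain from Step 2, via the monotonicity and convexity of $G_\lambda$ from Appendix \ref{app:G}. Equivalently, one combines the crude bound $\entire{x+\frac34}\ge x-\frac14$ with the half-step gain $\frac12\sum\kappa_{d,m}\bigl(G_\lambda(\ell_m)-G_\lambda(\ell_m+1)\bigr)$, which comes from $G_\lambda$ being decreasing.

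\textbf{Step 2: sum to integral.} I would express $\kappa_{d,m}=\frac{(2m+d-2)(m+d-3)!}{m!\,(d-2)!}$ as a polynomial in $\ell$. Using the elementary estimates of Appendix \ref{app:comb}, I would then bound it from below by $\frac{2}{(d-2)!}$ times an expression built from $S_d$ or $S_{d,+}$, which accounts for the lower-order coefficients $\frac{d-1}{2}$ and $\frac{(d-1)(d-2)(d-6)}{24}$.

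Because $G_\lambda$ is decreasing and convex, $\sum_{m<\mathcal M}S_d(\ell_m)G_\lambda(\ell_m)$ can be compared with $\int_{d/2-1}^{\mathcal L}S_d(z)G_\lambda(z)\,\dr z$ up to boundary terms. The key identity is
\[
w_d\lambda^d=\frac{2}{(d-2)!}\int_0^\lambda z^{d-2}G_\lambda(z)\,\dr z ,
\]
which I would check by Fubini from the definition \eqref{eq:Glambda} and a Beta integral.

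\textbf{Step 3: identifying the error terms.} Subtracting the identity in Step 2 produces three kinds of error.
\begin{itemize}
\item The lower-order terms of $S_d$, integrated against $G_\lambda$ over $[0,\lambda]$, use the moments $\int_0^\lambda z^{j}G_\lambda\,\dr z$. These are explicit Gamma ratios, which is where $\mathcal R_d$ enters. After normalisation they give the two terms of $Q_{d,1}$, of orders $\lambda^{-1}$ and $\lambda^{-2}$. Replacing $(d-6)$ by $(d-6)_+$ handles the sign of the cubic coefficient, since for $d<6$ that term helps.
\item The missing tail $\int_{\mathcal L}^\lambda z^{d-2}G_\lambda\,\dr z$ gives $Q_{d,2}$. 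Its prefactor $\sqrt{2\pi}\,d^{5/2}$ should come from bounding $\frac{2}{(d-2)!}/(\mathcal R_dw_d)$ via Stirling-type bounds on the Gamma ratio (Lemma \ref{lem:Gammaratio1}).
\item The short segment $[0,\frac d2-1]$ and the endpoint corrections should be nonpositive after the convexity argument.
\end{itemize}

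\textbf{Main obstacle.} The delicate step is Step 2 together with the sawtooth remainder from Step 1. One must get a clean one-sided comparison between the lattice sum with weight $\kappa_{d,m}$ and the integral, with no $\lambda$-dependent loss beyond the stated orders, and uniformly in $d$. My first attempt would be a Euler--Maclaurin-type comparison on each unit interval $[\ell-\frac12,\ell+\frac12]$, using convexity of $z\mapsto S_d(z)G_\lambda(z)$ where available. Failing that, I would use a left-endpoint comparison, exploiting that $G_\lambda$ is decreasing, and check whether the resulting half-step error is dominated by $Q_{d,1}$.
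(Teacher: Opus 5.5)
The gap is in Step 1, and it is decisive: $Q_{d,3}$ is the only positive term in the statement, and it is what eventually makes $\mathcal T_d$ positive. With your $\ell_m=m+\frac d2-1$, set
\[
s_m:=\entire{G_\lambda(\ell_m)+\tfrac34}-G_\lambda(\ell_m)-\tfrac14\in(-\tfrac12,\tfrac12],
\qquad
e_m:=G_\lambda(\ell_m)-\int_{\ell_m}^{\ell_m+1}G_\lambda(z)\,\dr z .
\]
Here $s_m$ is your sawtooth and $e_m$ is the discretisation gain. All terms with $m\ge\mathcal M$ vanish, so exactly
\[
\Pt^\Neu_d(\lambda)=\int_0^{\mathcal L}f^\Neu_d(z)G_\lambda(z)\,\dr z+\frac14\sum_{m=0}^{\mathcal M-1}\kappa_{d,m}+\sum_{m=0}^{\mathcal M-1}\kappa_{d,m}\left(e_m+s_m\right).
\]
What you need is therefore $\sum_m\kappa_{d,m}(e_m+s_m)\ge0$.

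This fails term by term. Since $G_\lambda$ is $\mathrm{Lip}_{1/2}$, $0\le e_m\le\frac14$, and your half-step gain $\frac12(G_\lambda(\ell_m)-G_\lambda(\ell_m+1))$ is likewise at most $\frac14$. Meanwhile $s_m$ can be arbitrarily close to $-\frac12$. Your ``equivalent'' version (crude bound $x-\frac14$ plus half-step gain) is not equivalent. It gives at most $\int_0^{\mathcal L}f^\Neu_dG_\lambda$, so the $+\frac14\sum\kappa_{d,m}$ is lost entirely. Worse, $e_m\approx0$ for $m$ near $\mathcal M$, where $G'_\lambda\approx0$ and the weights $\kappa_{d,m}$ are largest. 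So it actually gives $\int_0^{\mathcal L}f^\Neu_dG_\lambda$ minus a positive multiple of $\sum\kappa_{d,m}$.

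Euler--Maclaurin never sees the floor, and a left-endpoint comparison has the same defect, so neither repairs this. After normalisation the loss is of order $d^{3/2}\lambda^{-1}$. That is the order of $Q_{d,3}$ itself and $\sqrt d$ times larger than $Q_{d,1}\sim d\lambda^{-1}$, so it cannot be absorbed into $Q_{d,1}$ either.

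The missing idea is that the floor errors can only be controlled on \emph{tails}, not term by term. Lemma \ref{lem:sumNeu} provides exactly this. For a convex, decreasing $\mathrm{Lip}_{1/2}$ function (here $G'_\lambda\in[-\frac12,0]$ by Lemma \ref{lem:Gdiff}), it is precisely the statement $\sum_{m=n}^{\mathcal M-1}(e_m+s_m)\ge0$ for every $n<\mathcal M$. Because $\kappa_{d,n}$ is nondecreasing in $n$, the paper uses Abel summation with nonnegative weights:
\[
\Pt^\Neu_d(\lambda)=\sum_n(\kappa_{d,n}-\kappa_{d,n-1})\sum_{m\ge n}\entire{G_\lambda(\ell_m)+\tfrac34},\qquad \kappa_{d,-1}=0.
\]
It applies Lemma \ref{lem:sumNeu} to each unweighted inner sum and re-sums to obtain \eqref{eq:toestimate}.

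From there your outline is broadly right, but the paper's execution is simpler than your Step 2. Instead of a pointwise polynomial bound on $\kappa_{d,m}$ and a sum-to-integral comparison, it bounds the cumulative function, $F^\Neu_d\ge\frac{2}{(d-1)!}S_{d,+}$ (Lemma \ref{lem:FdNbound}). Since $S_{d,+}\le0$ on $[0,\frac d2-1]$, this also disposes of your short segment. It then integrates by parts using only that $G_\lambda$ decreases (Lemma \ref{lem:prodint}), and evaluates $\int_0^\lambda$ exactly by Lemma \ref{lem:GintW}. The constants follow from the exact identities of Lemma \ref{lem:wdrd}. For example, $\frac{2}{(d-2)!\,\mathcal R_dw_d}=\sqrt{2\pi}(d-1)d^{3/2}<\sqrt{2\pi}d^{5/2}$, so no Stirling-type bound enters $Q_{d,2}$.
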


We will now provide a sequence of estimates for $Q_{d,j}\left(\tau^3 d^{3/2}\right)$, $j=1,2,3$, assuming condition \eqref{eq:tau1}.

\begin{prop}\label{prop:Q1b} 
We have 
\begin{equation}\label{eq:T1}
\tau^3 Q_{d,1}\left(\tau^3 d^{3/2}\right)\le t_{1,0}(d)+t_{1,3}(d)\tau^{-3}=:\mathcal{T}_{d,1}(\tau),
\end{equation}
where
\[
t_{1,0}(d):=\frac{32 d^2+8 d+1}{64 d^{5/2}},\qquad t_{1,3}(d):=\frac{4 d (d-6)_+}{96 d^2+24 d-3}.
\]
Moreover, the function $\mathcal{T}_{d,1}(\tau)$ is monotone decreasing in $\tau$ and, subject to condition \eqref{eq:tau1}, is also monotone decreasing in $d$.
\end{prop}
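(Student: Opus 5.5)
The plan is to evaluate $\tau^3 Q_{d,1}(\tau^3 d^{3/2})$ explicitly from \eqref{eq:Q1}. The whole statement then reduces to two-sided bounds on the Gamma ratio $\mathcal{R}_d$, followed by an elementary monotonicity check. Substituting $\lambda=\tau^3 d^{3/2}$ gives
\[
\tau^3 Q_{d,1}\left(\tau^3 d^{3/2}\right)=\frac{\mathcal{R}_d}{2\sqrt d}+\frac{(d-6)_+}{24\,\mathcal{R}_d\, d}\,\tau^{-3}.
\]
Comparing the two terms with $t_{1,0}(d)$ and $t_{1,3}(d)\tau^{-3}$, the inequality \eqref{eq:T1} follows from
\begin{equation*}
1+\frac{1}{4d}-\frac{1}{32d^2}=\frac{96d^2+24d-3}{96d^2}\;\le\;\mathcal{R}_d\;\le\;\frac{32d^2+8d+1}{32d^2}=1+\frac{1}{4d}+\frac{1}{32d^2}.
\end{equation*}
The lower bound is needed only when $d\ge7$, since $t_{1,3}$ vanishes otherwise, but I would prove it for all $d\ge3$.

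To get these bounds I write $x=d/2$, so that $\mathcal{R}_d=\Gamma(x+1)/\bigl(\sqrt{x}\,\Gamma(x+\tfrac12)\bigr)$. I then use Watson's representation $\bigl(\Gamma(x+1)/\Gamma(x+\tfrac12)\bigr)^2=x+\theta(x)$, where $\theta$ decreases to $\tfrac14$. This gives $\mathcal{R}_d^2=1+\theta(x)/x\ge 1+\frac{1}{2d}$. With $u=\frac1{2d}$, the estimate $\sqrt{1+u}\ge 1+\frac u2-\frac{u^2}{8}$ yields exactly the lower bound, because $u^2/8=1/(32d^2)$.

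The upper bound is the delicate step, since it is sharp to order $d^{-2}$: the asymptotic expansion of $\mathcal{R}_d$ is precisely $1+\frac1{4d}+\frac1{32d^2}+O(d^{-3})$. I would first try a refined Watson/Kershaw-type bound $\theta(x)\le\frac14+\frac1{32x}$, which can be obtained from Binet's formula for $\log\Gamma$ or from a log-convexity argument. This gives $\mathcal{R}_d^2\le 1+\frac1{2d}+\frac1{8d^2}$. I then apply $\sqrt{1+u}\le 1+\frac u2-\frac{u^2}8+\frac{u^3}{16}$ with $u=\frac1{2d}+\frac1{8d^2}$. The $d^{-2}$ terms combine to $\frac1{16d^2}-\frac1{32d^2}=\frac1{32d^2}$, and I must check that the net $d^{-3}$ and higher contributions are non-positive; this appears to hold ($-\frac1{64d^3}+\frac1{128d^3}+\dots$). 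Any small $d$ for which the crude remainder estimates fail would be checked directly, since $\mathcal{R}_d$ is an explicit rational multiple of $\sqrt{2/d}$ or of $\sqrt{2d}/\pi$ depending on parity. This Gamma-ratio upper bound is where I expect the main difficulty.

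Monotonicity in $\tau$ is immediate because $t_{1,3}(d)\ge0$. For monotonicity in $d$ under \eqref{eq:tau1}, the term $t_{1,0}(d)=\frac{1}{2}d^{-1/2}+\frac18 d^{-3/2}+\frac1{64}d^{-5/2}$ is decreasing. However, $t_{1,3}$ increases for $d\ge7$, so the worst case is $\tau=1$, where I must show that $t_{1,0}+t_{1,3}$ is decreasing on the integers. For $d\le6$ this is just the decrease of $t_{1,0}$. The step from $6$ to $7$ is a direct numerical comparison. For $d\ge7$, I would bound the derivatives: $t_{1,0}'(d)\le -\frac14 d^{-3/2}$, while $t_{1,3}'(d)=\frac{d}{dd}\frac{4d^2-24d}{96d^2+24d-3}\approx\frac{1}{4d^2}$ (bounded by, say, $\frac{1}{3d^2}$). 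Since $\frac{1}{3d^2}<\frac14 d^{-3/2}$ for $d\ge7$, the sum is decreasing. For $\tau>1$ the factor $\tau^{-3}<1$ only weakens the increasing part, so the conclusion holds for all $\tau\ge1$.
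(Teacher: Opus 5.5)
Your reduction and your monotonicity argument match the paper's. Substituting $\lambda=\tau^3d^{3/2}$ into \eqref{eq:Q1} turns \eqref{eq:T1} into exactly the two-sided bound \eqref{eq:Rdbounds} of Lemma~\ref{lem:Gammaratio1}. Monotonicity in $d$ is then handled at the worst case $\tau=1$ by pitting $t_{1,0}'\le-\frac14d^{-3/2}$ against $t_{1,3}'=O(d^{-2})$, as in \S\ref{proof:prop:Q1b}. Your separate check of the step $6\to7$ is unnecessary: your derivative bounds hold on the real half-line $[6,\infty)$ with $(d-6)$ in place of $(d-6)_+$, which is how the paper covers it.

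The genuine difference is how you prove \eqref{eq:Rdbounds}. The paper imports Lampret's expansion of $\log$ of the Wallis ratio, with remainder $|\delta(x)|<\frac1{630x^5}$, and gets both bounds from elementary $\log$/$\exp$ inequalities. You write $\mathcal{R}_d^2=1+\theta(x)/x$ with Watson's $\theta$.

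\begin{itemize}
\item This makes the lower bound nearly free. It needs only the qualitative fact $\theta>\frac14$, i.e.\ Kershaw's $\Gamma(x+1)/\Gamma(x+\frac12)>\sqrt{x+\frac14}$.
\item Your Taylor bookkeeping for the upper bound is correct. The net remainder is $-\frac1{128d^3}+\frac1{256d^4}+\dots<0$ for $d\ge3$.
\item However, the upper bound rests on $\theta(x)\le\frac14+\frac1{32x}$, which you leave unproved. It is true (e.g.\ $\theta(\frac32)=\frac{9\pi}{16}-\frac32\approx0.2671<0.2708$), but it is a second-order statement.
\item Log-convexity arguments cannot deliver it: they only give Gautschi--Kershaw bounds with a constant shift, such as $\theta<\frac{\sqrt3-1}{2}$. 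You need an explicit-remainder expansion (Binet's formula) or a citation of a sharp Wallis-ratio inequality. That is precisely the input the paper takes from Lampret.
\end{itemize}

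So your route gives a cleaner lower bound but saves nothing on the upper one.

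A minor slip: $\mathcal{R}_d$ is a rational multiple of $\sqrt{2\pi/d}$ for odd $d$ and of $\sqrt{2/(\pi d)}$ for even $d$, not of $\sqrt{2/d}$ or $\sqrt{2d}/\pi$.
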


\begin{prop}\label{prop:Q2b} 
Assuming that  condition \eqref{eq:tau1} holds, we have 
\begin{equation}\label{eq:T2}
\begin{split}
\tau^3 Q_{d,2}\left(\tau^3 d^{3/2}\right)&\le t_{2,2}\tau^{-2}+t_{2,4}(d)\tau^{-4} +t_{2,6}(d)\tau^{-6}+t_{2,8}(d)\tau^{-8}\\
&= t_{2,2}\left(\tau^{-2}+\alpha_{4}(d)\tau^{-4} +\alpha_{6}(d)\tau^{-6}+\alpha_{8}(d)\tau^{-8}\right)=:\mathcal{T}_{d,2}(\tau),
\end{split}
\end{equation}
where 
\[
t_{2,2}=\frac{3^{2/3}\pi^{7/6}}{20\cdot 2^{5/6}},
\]
and
\[
\begin{split}
\alpha_{4}(d):=\frac{t_{2,4}(d)}{t_{2,2}}&=-\frac{2^{1/6}3^{2/3}\pi^{2/3}\left(10\sqrt2 d-3\pi-14\sqrt2\right)}{112d},\\
\alpha_{6}(d):=\frac{t_{2,6}(d)}{t_{2,2}}&=\frac{6^{1/3}\pi^{4/3}(d-2)\left(10d-3\sqrt2\pi-18\right)}{384d^2},\\
\alpha_{8}(d):=\frac{t_{2,8}(d)}{t_{2,2}}&=\frac{27\pi^2(d-3)(d-2)(\sqrt2\pi-4)}{11264d^3}.
\end{split}
\]
Moreover, under the same condition, $\mathcal{T}_{d,2}(\tau)$  is monotone decreasing both in $\tau$ and $d$.
\end{prop}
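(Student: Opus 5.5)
The plan is to bound the integral in \eqref{eq:Q2} by localising to a thin layer near $z=\lambda$. There the integrand is controlled by the local behaviour of $G_\lambda$ at its turning point. Writing $s:=\lambda-z$, one has
\[
G_\lambda(\lambda-s)=\frac{2\sqrt2}{3\pi}\,\lambda^{-1/2}s^{3/2}\bigl(1+O(s/\lambda)\bigr),\qquad s\to0.
\]
I would use explicit two-sided versions of this expansion, valid on the relevant range of $s$. These should be among the properties of $G_\lambda$ collected in Appendix~\ref{app:G}; if they are not, I would prove them directly from $G_\lambda'(z)=-\frac1\pi\arccos\frac z\lambda$ and elementary bounds for $\arccos(1-u)$. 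Concretely, I aim for
\[
G_\lambda(\lambda-s)\le \frac{2\sqrt2}{3\pi}\lambda^{-1/2}s^{3/2}\Bigl(1+c_1\frac{s}{\lambda}\Bigr)
\qquad\text{and}\qquad
G_\lambda(\lambda-s)\ge \frac{2\sqrt2}{3\pi}\lambda^{-1/2}s^{3/2}\Bigl(1-c_2\frac{s}{\lambda}\Bigr),
\]
with explicit constants $c_1,c_2$.

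\textbf{Step 1 (width of the layer).} By definition \eqref{eq:defofell0} we have $G_\lambda(\mathcal L)<\frac14$, and $G_\lambda$ is decreasing. Applying the lower bound for $G_\lambda$ at $s_0=\lambda-\mathcal L$ gives
\[
s_0\le h:=\Bigl(\frac{3\pi}{8\sqrt2}\Bigr)^{2/3}\lambda^{1/3}\bigl(1+\text{correction}\bigr).
\]
The correction is of relative order $h/\lambda\sim\lambda^{-2/3}$. With $\lambda=\tau^3d^{3/2}$, this is $\tau^{-2}d^{-1}$. Hence the integral in \eqref{eq:Q2} is bounded by the same integral taken over $s\in[0,h]$.

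\textbf{Step 2 (integration).} On $[0,h]$ I bound $z^{d-2}=\lambda^{d-2}(1-s/\lambda)^{d-2}$ by a truncated binomial/Bonferroni-type polynomial in $s/\lambda$. I truncate after a positive (even-order) term so that the result is a genuine upper bound. I multiply this by the upper bound for $G_\lambda$ and integrate monomials $s^{3/2+j}$ term by term, which gives $\frac{h^{5/2+j}}{5/2+j}\lambda^{-j}$.

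The leading term is
\[
\sqrt{2\pi}\,d^{5/2}\lambda^{-d}\,\lambda^{d-2}\,\frac{2\sqrt2}{3\pi}\lambda^{-1/2}\,\frac25\,h^{5/2}\asymp d^{5/2}\lambda^{-5/3}=\tau^{-5}.
\]
After multiplying by $\tau^3$ this yields $t_{2,2}\tau^{-2}$ with
\[
t_{2,2}=\sqrt{2\pi}\cdot\frac25\cdot\frac{2\sqrt2}{3\pi}\Bigl(\frac{3\pi}{8\sqrt2}\Bigr)^{5/3}.
\]
Each further power of $s/\lambda$ contributes a factor $h/\lambda\asymp\tau^{-2}d^{-1}$. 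This produces the $\tau^{-4},\tau^{-6},\tau^{-8}$ terms, with coefficients rational in $d$ of the displayed shape. The negative $\alpha_4$ comes from the $-(d-2)s/\lambda$ binomial term dominating the $G$-correction and the $h$-correction. The $\pi$'s and $\sqrt2$'s in $\alpha_j$ come from the correction constants $c_1,c_2$ and from expanding $h^{5/2+j}$.

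\textbf{Step 3 (monotonicity).} I would check monotonicity in $\tau$ and $d$ for $\tau\ge1$ by differentiating the explicit polynomial in $\tau^{-2}$. Monotonicity in $\tau$ requires
\[
1+2\alpha_4\tau^{-2}+3\alpha_6\tau^{-4}+4\alpha_8\tau^{-6}>0 .
\]
Monotonicity in $d$ requires $\partial_d\alpha_j$ to combine with a non-positive total sign. Both reduce to elementary inequalities on rational functions of $d$ for $d\ge3$ and $\tau\ge1$.

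The main obstacle I expect is Step 1 together with the sharp correction terms. The truncation must be sign-correct uniformly for all $d\ge3$, and the two-sided bounds on $G_\lambda$ must hold on the whole interval $[0,h]$, not just asymptotically. Since $h/\lambda$ is only small when $\tau\ge1$ and $d$ is moderate, I would first test that $h\le\lambda$ and $(d-2)h/\lambda$ stay bounded under \eqref{eq:tau1}. That control is what makes the alternating binomial truncation legitimate.
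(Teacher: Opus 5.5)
Your strategy is the paper's. You lower-bound $\mathcal{L}$ via a lower bound for $G_\lambda$, replace $G_\lambda$ by an upper bound $\lambda\bigl(c_1\theta^{3/2}+c_2\theta^{5/2}\bigr)$ with $\theta=1-z/\lambda$, truncate $(1-\theta)^{d-2}$ after an even-order term, integrate term by term, and check monotonicity by differentiation. Your leading constant $t_{2,2}$ is right, and so is your reduction of monotonicity in $\tau$ to $1+2\alpha_4\tau^{-2}+3\alpha_6\tau^{-4}+4\alpha_8\tau^{-6}>0$.

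However, Step~1 as you set it up cannot produce the displayed $\mathcal{T}_{d,2}$. You allow a negative correction in the lower bound, $G_\lambda(\lambda-s)\ge c_1\lambda^{-1/2}s^{3/2}(1-c\,s/\lambda)$, and hence a layer width $h=\omega\lambda^{1/3}(1+\text{correction})$. Then $h^{5/2+j}$ is not a polynomial in $\tau^{-2}$, and the expansion does not stop at $\tau^{-8}$. The correction also adds positive contributions, already to the $\tau^{-4}$ coefficient. So you would end with a strictly weaker upper bound that does not imply \eqref{eq:T2}.

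The missing observation is that no correction is needed; the true next-order term is positive. Indeed $\frac1\pi\arccos v>\frac{\sqrt2}{\pi}\sqrt{1-v}$ for $v\in(0,1)$, which is just $\cos x>1-\frac{x^2}{2}$ with $x=\sqrt{2(1-v)}$. Integrating gives $G_\lambda(z)>c_1\lambda\theta^{3/2}$ with $c_1=\frac{2\sqrt2}{3\pi}$. Hence $\lambda-\mathcal{L}<\omega\lambda^{1/3}$ \emph{exactly}, i.e.\ $\Theta=\omega\lambda^{-2/3}=\omega d^{-1}\tau^{-2}$.

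The other free choices in your plan must also be pinned down to reach the stated coefficients. The upper bound must use $c_2=\frac15\bigl(1-\frac{2\sqrt2}{\pi}\bigr)$, which comes from the interpolation $\frac1\pi\arccos v<\bigl(\frac12-\bigl(\frac12-\frac{\sqrt2}{\pi}\bigr)v\bigr)\sqrt{1-v}$ (sharp at $v=0$ and as $v\to1$). The truncation must stop at the quadratic term, $(1-\theta)^{d-2}\le 1-(d-2)\theta+\frac{(d-2)(d-3)}{2}\theta^2$.

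Then $\int_0^\Theta(1-\theta)^{d-2}\bigl(c_1\theta^{3/2}+c_2\theta^{5/2}\bigr)\,\dr\theta$ has exactly four terms, in $\Theta^{5/2},\Theta^{7/2},\Theta^{9/2},\Theta^{11/2}$. These give $t_{2,2}\tau^{-2},\dots,t_{2,8}\tau^{-8}$; for example $\alpha_4(d)=-\frac57\bigl(d-2-\frac{c_2}{c_1}\bigr)\frac{\omega}{d}$. So the $\pi$'s come from $\omega$ and $c_2/c_1$, not from expanding $h$.

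Your worry about keeping $(d-2)h/\lambda$ bounded is unnecessary. The quadratic truncation holds for all $\theta\in[0,1]$, since the cubic Taylor remainder is non-positive. You only need $\Theta\le1$, which holds because $\Theta=\omega/(d\tau^2)<1$.

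For monotonicity, the paper writes $\mathcal{T}_{d,2}/t_{2,2}$ as a polynomial in $\sigma=\tau^{-2}\in(0,1]$ with coefficients polynomial in $\xi=1/d\in(0,\frac13]$. It bounds $\partial_\sigma$ and $\sigma^{-2}\partial_\xi$ from below by expressions linear in $\xi$, then checks positivity at $\xi=0$ and $\xi=\frac13$. Your sketch is compatible with this but does not yet carry it out.
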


\begin{prop}\label{prop:Q3b} 
Assume that  condition \eqref{eq:tau1} holds, and define
\begin{equation}\label{eq:omega}
\omega:=2^{-7/3}(3\pi)^{2/3}\approx 0.8853.
\end{equation}
We have 
\begin{equation}\label{eq:T3}
\tau^3 Q_{d,3}\left(\tau^3 d^{3/2}\right)\ge  \frac{1}{2} \sqrt{\frac{\pi }{2}}\er^{-\omega\tau^{-2}} \left(1-\frac{d-1}{2 \sqrt{d} \left(d \tau ^3-\tau  \omega \right)}-\frac{(d-1)(d-2)(d-6)}{24 d \left(d \tau ^3-\tau  \omega \right)^2}\right)
=:\mathcal{T}_{d,3}(\tau),
\end{equation}
where $\mathcal{T}_{d,3}(\tau)$  is monotone increasing both in $\tau\ge 1$ and $d\ge 3$.
\end{prop}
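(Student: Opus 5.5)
The plan is to bound $\mathcal{M}$ from below, turn the multiplicity sum into a polynomial in $\mathcal{L}$, and then compare the result with $w_d\lambda^d$ through Gamma-function ratios. Throughout, $\lambda=\tau^3d^{3/2}$, so that $\lambda^{1/3}=\tau d^{1/2}$.

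\emph{Step 1: lower bound for $\mathcal{L}$.} The key fact is the Airy-type lower bound near the turning point,
\[
G_\lambda(z)\ \ge\ \frac{2\sqrt2}{3\pi}\,\frac{(\lambda-z)^{3/2}}{\lambda^{1/2}},\qquad 0\le z\le\lambda .
\]
I expect this either to be in Appendix~\ref{app:G} or to be provable there by comparing derivatives, using $G_\lambda'(z)=-\frac1\pi\arccos\frac z\lambda$ and $\arccos(1-s)\ge\sqrt{2s}$.

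By definition \eqref{eq:defofm0} we have $G_\lambda(\mathcal{L})<\frac14$. The bound above then forces
\[
\lambda-\mathcal{L}>\Bigl(\tfrac{3\pi}{8\sqrt2}\Bigr)^{2/3}\lambda^{1/3}=\omega\lambda^{1/3}
\]
whenever $\mathcal{L}\le\lambda$. This inequality goes the wrong way, and it is the first thing to recheck. What we actually need is a lower bound on $\mathcal{M}$. Minimality of $\mathcal{M}$ gives $G_\lambda(\mathcal{L}-1)\ge\frac14$, provided $\mathcal{M}\ge1$. So the usable bound must run in the reverse direction, namely an \emph{upper} bound on $G_\lambda$ near $\lambda$.

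I would therefore use that $G_\lambda$ is decreasing, together with a bound of the form: $G_\lambda(z)\ge\frac14$ for all $z\le\lambda-\omega\lambda^{1/3}$. This follows from the displayed lower bound, since for such $z$ we have $\frac{2\sqrt2}{3\pi}(\omega\lambda^{1/3})^{3/2}\lambda^{-1/2}=\frac14$. Hence every $m$ with $m+\frac d2-1\le\lambda-\omega\lambda^{1/3}$ satisfies $m<\mathcal{M}$, and so
\[
\mathcal{L}\ \ge\ \lambda-\omega\lambda^{1/3}=\sqrt d\,\bigl(d\tau^3-\omega\tau\bigr)=:x .
\]
Condition \eqref{eq:tau1} guarantees $x>0$. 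This is exactly the quantity appearing in the denominators of $\mathcal{T}_{d,3}$.

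\emph{Step 2: the multiplicity sum.} The sum $\sum_{n=0}^{\mathcal{M}-1}\kappa_{d,n}$ telescopes from \eqref{eq:kappadm} to
\[
\binom{\mathcal{M}+d-2}{d-1}+\binom{\mathcal{M}+d-3}{d-1}.
\]
Written in the variable $\mathcal{L}=\mathcal{M}+\frac d2-1$, this should be bounded below by $\frac{2}{(d-1)!}S_d(\mathcal{L})$ (Appendix~\ref{app:comb}). The coefficients $\frac{d-1}2$ and $\frac{(d-1)(d-2)(d-6)}{24}$ in \eqref{eq:Sd} are exactly the symmetric-function corrections from expanding the product of the shifted linear factors about its centre.

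Since $S_d$ is increasing for $z\ge x$ (to be checked for large enough $z$), we get $S_d(\mathcal{L})\ge S_d(x)=x^{d-1}\bigl(1-\frac{d-1}{2x}-\frac{(d-1)(d-2)(d-6)}{24x^2}\bigr)$. Dividing by $\sqrt d$ where needed, this reproduces the bracket in \eqref{eq:T3}.

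\emph{Step 3: constants and the exponential factor.} We have
\[
\frac{x^{d-1}}{\lambda^{d-1}}=\Bigl(1-\frac{\omega}{d\tau^2}\Bigr)^{d-1}\ge \er^{-\omega\tau^{-2}} .
\]
This follows from $(d-1)\log(1-a/d)\ge -a$ for $0\le a=\omega\tau^{-2}\le\omega<1$, which can be proved by a series or convexity argument.

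The remaining constant is $\dfrac{2}{4(d-1)!\,\mathcal{R}_d w_d\lambda}$. Since $\lambda=\tau^3d^{3/2}$, this equals $\tau^{-3}$ times a ratio of Gamma functions in $d$, which should simplify exactly (or bound below) to $\frac12\sqrt{\pi/2}$. I would use the duplication formula for $\Gamma(d)$ against $\Gamma(\frac d2+1)^2$ and $\Gamma(\frac d2+\frac12)$, possibly invoking Lemma~\ref{lem:Gammaratio1}. I expect this bookkeeping to be the main obstacle: the exponential factor, the $\sqrt d$ powers and the Gamma ratio must combine with exactly the stated constant.

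\emph{Step 4: monotonicity.} The factor $\er^{-\omega\tau^{-2}}$ increases in $\tau$. The quantity $d\tau^3-\omega\tau$ increases in $\tau\ge1$ and in $d$, and the subtracted terms in the bracket decay like $d^{-1/2}$ and $d^{-1}$. Checking the $d$-derivative of each rational term, using $\tau\ge1$, then gives the claimed monotonicity of $\mathcal{T}_{d,3}$ in both variables.
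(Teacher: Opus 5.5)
Your derivation of the inequality \eqref{eq:T3} follows the paper's proof. It uses Lemma \ref{lem:estell} for $\mathcal L\ge\utilde{\mathcal L}=\lambda-\omega\lambda^{1/3}$. Your first attempt in Step 1 is just a sign slip: $G_\lambda(\mathcal L)<\frac14$ together with the \emph{lower} bound $c_1\lambda\theta^{3/2}<G_\lambda$ directly gives $\lambda-\mathcal L<\omega\lambda^{1/3}$. You then use \eqref{eq:kappadmsum} with Lemma \ref{lem:Pi3}, Lemma \ref{lem:monder} for the monotonicity of $S_d$ on all of $[\utilde{\mathcal L},\infty)$, and Lemma \ref{lem:log}. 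The constant is exactly $\frac12\sqrt{\pi/2}\,\tau^{-3}$ by \eqref{eq:factwratios}, since $\mathcal R_d$ cancels, so no Gamma-ratio estimate is needed.

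One point you skip: replacing $\bigl(1-\frac{\omega}{d\tau^2}\bigr)^{d-1}$ by the smaller $\er^{-\omega\tau^{-2}}$ requires the bracket to be nonnegative. Set $\phi_1:=\frac{d-1}{2\sqrt d}$, $\phi_2:=\frac{(d-1)(d-2)(d-6)}{24d}$ and $\phi_3:=d\tau^3-\omega\tau$, so the bracket is $\Psi:=1-\phi_1/\phi_3-\phi_2/\phi_3^2$. Since $\phi_3\ge d-\omega>d-1$ for $\tau\ge1$, one gets $\Psi>1-\frac{1}{2\sqrt3}-\frac1{24}>0$, but this has to be stated.

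The genuine gap is Step 4, and it concerns monotonicity in $d$. Your claim that the subtracted terms decay in $d$ is false for the last one. The numerator and denominator of $\phi_2/\phi_3^2$ are both of order $d^3$, and for $d\ge6$ this term is nonnegative and \emph{increasing} in $d$, tending to $\frac{1}{24\tau^6}$. It therefore pushes $\mathcal T_{d,3}$ down as $d$ grows, so checking the $d$-derivative of each term separately cannot succeed.

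Monotonicity in $d$ holds only because the gain from $-\phi_1/\phi_3$ beats the loss from $-\phi_2/\phi_3^2$. To leading order their $d$-derivatives are $\frac{1}{4\tau^3d^{3/2}}$ and $-\frac{3}{8\tau^6d^2}$ respectively, and the comparison must be proved quantitatively down to $d=3$. The paper computes both derivatives exactly and reduces the question to $2\sqrt d\,(d-3)\phi_3\ge3(d-2)^2$ for $d\ge6$, and to the polynomial inequality \eqref{eq:positivityinequality2} for $3\le d\le6$.

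The $\tau$-monotonicity has a similar, smaller hole. For $d\in\{3,4,5\}$ one has $\phi_2<0$, so the last term contributes $+|\phi_2|/\phi_3^2$, which \emph{decreases} as $\tau$ grows. The fact that $\phi_3$ increases is therefore not enough. You need $\partial_\tau\Psi=\frac{\partial_\tau\phi_3}{\phi_3^2}\bigl(\phi_1+\frac{2\phi_2}{\phi_3}\bigr)>0$, i.e.\ an explicit check that $\phi_1+2\phi_2/\phi_3>0$ in those dimensions. Finally, passing from monotonicity of $\Psi$ to monotonicity of $\er^{-\omega\tau^{-2}}\Psi$ in $\tau$ again uses $\Psi>0$.
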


Propositions \ref{prop:Qbound123}--\ref{prop:Q3b} are proved below. Assuming their validity, we proceed now to the proof of Theorem \ref{thm:A} proper. 

First of all, we set
\[
\mathcal{T}_d(\tau):= -\mathcal{T}_{d,1}(\tau) - \mathcal{T}_{d,2}(\tau)+ \mathcal{T}_{d,3}(\tau).
\]
By Propositions \ref{prop:Qbound123}--\ref{prop:Q3b}, we get \eqref{eq:Tineq} with the desired monotonicity properties. As 
\[
\mathcal{T}_d(\tau) = \frac{1}{64} \left(16 \sqrt{2 \pi }-\frac{8 d (4 d+1)+1}{d^{5/2}}\right)+O\left(\tau^{-2}\right)\qquad\text{as }\tau\to\infty,
\]
it is eventually positive. The explicit verified rational approximations show that $\mathcal{T}_{61}(1)>10^{-4}>0$, and therefore $\tau_d^*=1$ for $d\ge 61$. Similarly, $\mathcal{T}_{13}\left(\frac{28}{25}\right))>10^{-3}>0$, and therefore $\tau_d^*<\frac{28}{25}$ for all $d\ge 13$.

Finally, a combination of \eqref{eq:modneumcount}, \eqref{eq:latticecomparison}, \eqref{eq:Q}, and \eqref{eq:Tineq} implies \eqref{eq:Polyatau}.

\subsection{Proof of Proposition \ref{prop:Qbound123}}

We recall

\begin{lemma}\label{lem:sumNeu}
Let $a\in\mathbb{Z}$, $b\in\mathbb{R}$, with $b>a$, and let $g:[a,b]\to\mathbb{R}$ be a non-negative strictly decreasing convex function such that $g(a)\ge \frac{1}{4}$, $g(b)\ge 0$, and $g$ is $\mathrm{Lip}_{\frac{1}{2}}$, that is $|g(z)-g(w)|\le \frac{1}{2}|z-w|$ for all $z,w\in[a, b]$. Let
\[
\begin{split}
\mathbf{M}=\mathbf{M}_g&:=\min\left\{m\in\{a,\dots,\entire{b}\}: g(m)< \frac{1}{4}\right\}\\&
=1+\max\left\{m\in\{a,\dots,\entire{b}\}: g(m)\ge \frac{1}{4}\right\}=1+\entire{g^{-1}\left(\frac{1}{4}\right)},
\end{split}
\]
and assume that $\mathbf{M}\le b$. 
Then 
\begin{equation}\label{eq:gsumbound}
\sum_{m=a}^{\entire{b}} \entire{g(m)+\frac{3}{4}}\ge \int_a^{\mathbf{M}} g(z)\,\dr z+\frac{\mathbf{M}-a}{4}.
\end{equation}
\end{lemma}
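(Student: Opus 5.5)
Since $g$ is strictly decreasing, the set $\{m: g(m)\ge\frac14\}$ is the initial segment $\{a,\dots,\mathbf{M}-1\}$, and $\mathbf{M}\ge a+1$ because $g(a)\ge\frac14$. The terms with $m\ge\mathbf{M}$ are nonnegative, since $g\ge 0$ gives $\entire{g(m)+\frac34}\ge 0$, so they can be discarded. The proof therefore reduces to showing
\[
\sum_{m=a}^{\mathbf{M}-1}\entire{g(m)+\tfrac34}\ \ge\ \int_a^{\mathbf{M}}g(z)\,\dr z+\frac{\mathbf{M}-a}{4}.
\]
We will prove this term by term: for each integer $m$ with $a\le m\le \mathbf{M}-1$ we aim to show
\[
\entire{g(m)+\tfrac34}\ \ge\ \int_m^{m+1}g(z)\,\dr z+\frac14, \tag{$*$}
\]
and then summing over $m$ gives the claim. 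All the interval endpoints $m+1\le\mathbf{M}\le b$ lie in the domain of $g$.

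To prove $(*)$, write $g(m)=n+\theta$ with $n=\entire{g(m)}$ and $\theta\in[0,1)$. Since $g$ is decreasing, $\int_m^{m+1}g\le g(m)$. The rounding is then split into two cases.

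If $\theta\ge\frac14$, the left side of $(*)$ equals $n+1\ge g(m)+\frac34\cdot 0+ (1-\theta)$. We only need $n+1\ge g(m)+\frac14$, i.e.\ $\theta\le\frac34$. The subcase $\theta>\frac34$ is where the Lipschitz condition has to be used.

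If $\theta<\frac14$, the left side equals $n=g(m)-\theta$. We then need $\int_m^{m+1}g\le g(m)-\theta-\frac14$, which again calls for the decay of $g$ together with $\mathrm{Lip}_{1/2}$ and convexity.

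I suspect term-by-term is too crude, so the main step will likely be a smarter comparison. The plan is to use the bound $\entire{x+\frac34}\ge x-\frac14$ only on average, in the manner of the trapezoid/Euler--Maclaurin argument of \cite[Proposition 3.1]{FLPS} and \cite{FLPSbessel}.

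The idea is to compare the sum with the integral via the trapezoid rule. By convexity, $\int_m^{m+1}g\le\frac12(g(m)+g(m+1))$, and hence
\[
\int_a^{\mathbf{M}}g\ \le\ \sum_{m=a}^{\mathbf{M}-1}g(m)-\frac{g(a)-g(\mathbf{M})}{2}.
\]
It then suffices to show
\[
\sum_{m=a}^{\mathbf{M}-1}\Bigl(\entire{g(m)+\tfrac34}-g(m)+\tfrac14\Bigr)\ \ge\ \frac{g(\mathbf{M})-g(a)}{2}+\frac{\mathbf{M}-a}{2},
\]
that is, a statement about the fractional parts of $g(m)+\frac34$. Each summand equals $1-\{g(m)+\frac34\}$, which lies in $(0,1]$.

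The main obstacle is controlling the fractional parts $\{g(m)+\frac34\}$ collectively. This is where $\mathrm{Lip}_{1/2}$ enters: consecutive values $g(m)$ differ by at most $\frac12$, so the sequence decreases slowly. Within each band where $\entire{g+\frac34}$ is constant, the deficit $\{g(m)+\frac34\}$ takes values forming a decreasing sequence with steps at most $\frac12$. Averaging over such a band should then match the linear decrease of $g$ across it.

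I would carry this out by grouping the integers $m$ according to the value $k=\entire{g(m)+\frac34}$. On each group the integral of $g$ is bounded using convexity by the trapezoid on that group plus the contribution of the jump across the level $k-\frac34$. The bound is then checked by a direct calculation exploiting that the level crossing point lies within distance $\frac12$-controlled slack. The final group, which ends at $\mathbf{M}$ where $g<\frac14$ and $k=0$, is handled separately, using $g(\mathbf{M}-1)\ge\frac14$.
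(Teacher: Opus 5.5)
Your reduction is sound. Convexity gives $\int_m^{m+1}g\le\frac12\bigl(g(m)+g(m+1)\bigr)$, and the resulting discrete inequality
\[
\sum_{m=a}^{\mathbf{M}-1}\Bigl(f_m-\frac{1+d_m}{2}\Bigr)\le 0,
\qquad f_m:=\Bigl\{g(m)+\frac34\Bigr\},\quad d_m:=g(m)-g(m+1),
\]
loses nothing essential: it is the lemma itself for the piecewise-linear interpolant of $g(a),\dots,g(\mathbf{M})$. You were also right to drop $(*)$, because it is false. For $g(z)=1.2-0.2z$ at $m=0$ its left side is $1$ and its right side is $1.35$.

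The gap is that the decisive estimate is never proved. Phrases such as ``should then match'' and ``a direct calculation exploiting that the level crossing point lies within distance $\frac12$-controlled slack'' are not arguments. The sketch also points at the wrong mechanism. The last group in the sum is the one with $\entire{g(m)+\frac34}=1$, not $0$; the indices with value $0$ are exactly $m\ge\mathbf{M}$, which you already discarded. And $\mathrm{Lip}_{1/2}$ is not what makes the estimate work; the argument below never uses it.

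The missing idea is as follows. Let $p\le m\le q$ be a maximal run on which $\entire{g(m)+\frac34}=k\ge1$, and set $n:=q-p+1$ and $D:=\sum_{j=p}^{q}d_j$. Maximality gives $g(q+1)+\frac34<k$; for the last run this is $g(\mathbf{M})<\frac14$. Equivalently $D>f_p$, while trivially $f_p<1$. Convexity makes the drops non-increasing, $d_p\ge d_{p+1}\ge\dots\ge d_q$. Since $f_m=f_p-\sum_{j=p}^{m-1}d_j$, Chebyshev's sum inequality gives
\[
\sum_{m=p}^{q}f_m=nf_p-\sum_{j=p}^{q}(q-j)\,d_j\le nf_p-\frac{n-1}{2}\,D .
\]
Hence the run contributes at most $\frac n2\,(2f_p-D-1)<\frac n2\,(f_p-1)<0$ to the sum above. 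Summing over all runs proves the lemma.

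With this step added your proof is complete. It uses only convexity, monotonicity, $g\ge0$ and the definition of $\mathbf{M}$. For comparison, the paper does not reprove this lemma; it imports it from earlier work (see Remark~\ref{rem:M0}).
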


\begin{remark}\label{rem:M0} 
Lemma \ref{lem:sumNeu} is, up to a change of an independent variable, the same as \cite[Lemma 3.4]{FLPS-AB} and \cite[Theorem 6.1]{FLPS}. We note that in the left-hand side of \eqref{eq:gsumbound} one can replace the upper sum limit $m=\entire{b}$ by $m=\mathbf{M}-1$. Additionally, if $g(a)< \frac{1}{4}$, then \eqref{eq:gsumbound} remains (trivially) valid if we set $\mathbf{M}:=a$. 
\end{remark}

According to our definition \eqref{eq:defofm0}, 
\[
\mathcal{M}=\mathbf{M}_{z\mapsto G_{\lambda}\left(z+\frac{d}{2}-1\right)}.
\]
We have, taking in \eqref{eq:latticecomparison} $\kappa_{d,-1}=0$,
\[
\Pt^\Neu_d(\lambda)=\sum_{n=0}^{\entire{\lambda-\frac{d}{2}+1}}\left(\kappa_{d,n}-\kappa_{d,n-1}\right) \sum_{m=n}^{\entire{\lambda-\frac{d}{2}+1}}\entire{G_{\lambda}(m+\frac{d}{2} - 1)+\frac 34}.
\]
In order to estimate each inner sum, we apply Lemma \ref{lem:sumNeu} with $g(z)=G_{\lambda}\left(z+\frac{d}{2}-1\right)$, $a=n$, and $b=\lambda-\frac{d}{2}+1$,  
which gives, after the change of variables $t=z+\frac{d}{2}-1$. with account of \eqref{eq:defofell0}, and changing the upper sum limits from $\entire{\lambda-\frac{d}{2}+1}$ to  $\mathcal{M}-1$ in accordance with Remark \ref{rem:M0},
\begin{equation}\label{eq:dredN1}
\begin{split}
\Pt^\Neu_d(\lambda)&\ge \sum_{n=0}^{\mathcal{M}-1}\left(\kappa_{d,n}-\kappa_{d,n-1}\right)\left(\int_{n}^{\mathcal{M}}G_{\lambda}\left(z+\frac{d}{2}-1\right)\, \dr z+\frac{\mathcal{M}-n}{4}\right)\\
&=\sum_{n=0}^{\mathcal{M}-1}\left(\kappa_{d,n}-\kappa_{d,n-1}\right)\left(\int_{n+\frac{d}{2}-1}^{\mathcal{L}}G_{\lambda}(t)\, \dr t+\frac{\mathcal{M}-n}{4}\right).
\end{split}
\end{equation}

Define the constants
\[
z_m:=m+d/2-1, \qquad m\in\mathbb{N}_0,
\] 
and a piecewise-constant function 
\[
f_d^\Neu(z)=
\begin{cases} 0 &\qquad\text{if }z<z_0,\\
\kappa_{d,m} &\qquad\text{if }z\in\left[z_m,z_{m+1}\right),\quad m=\entire{z-\frac{d}{2}+1}\in\mathbb N_0.
\end{cases}
\]

After trivial changes, we re-write \eqref{eq:dredN1} as
\begin{equation}\label{eq:toestimate}
\Pt^\Neu_d(\lambda)\ge\int_0^{\mathcal{L}}f^\Neu_d(z)G_{\lambda}(z)\, \dr z + \frac{1}{4}\sum_{n=0}^{\mathcal{M}-1}\kappa_{d,n},
\end{equation}

We also define
\[
F_d^\Neu(z):=\int_0^z f_d^\Neu(t)\,\dr t=
\begin{cases} 0 &\qquad\text{if }z<z_0,\\
\left(\sum_{j=0}^{m-1}\kappa_{d,j}\right)+(z-z_m)\kappa_{d,m} &\qquad\text{if }z\in\left[z_m,z_{m+1}\right),\quad m=\entire{z-\frac{d}{2}+1}\in\mathbb N_0.
\end{cases}
\]

We will use an auxiliary lemma similar to \cite[Lemma 7.2]{FLPS}, see \S\ref{proof:lem:prodint} for the proof. 
\begin{lemma}\label{lem:prodint} 
Let $f$ be a locally integrable non-negative function on $[0,+\infty)$, and let $\utilde{F}\in C^1\left([0,+\infty)\right)$ with $\utilde{F}(0)=0$ be such that
\[
\utilde{F}(z)\le F(z):=\int_0^z f(t)\,\dr t\qquad\text{for all }z\ge 0.
\] 
Let $b>0$, and let $g\in C^1[0,b]$ be a decreasing function such that $g(b)\ge 0$. Then
\begin{equation}\label{eq:fgprod}
\int_0^b f(z)g(z)\,\dr z \ge \int_{0}^b \utilde{F}'(z) g(z)\,\dr z.
\end{equation}
\end{lemma}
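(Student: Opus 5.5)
The plan is to integrate by parts on both sides and compare the boundary terms and the remaining integrals separately.

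Since $f\ge 0$ is locally integrable, $F$ is absolutely continuous with $F'=f$ almost everywhere and $F(0)=0$. Since $g\in C^1[0,b]$, the integration-by-parts formula for a product of absolutely continuous functions applies and gives
\[
\int_0^b f(z)g(z)\,\dr z = F(b)g(b) - \int_0^b F(z)g'(z)\,\dr z .
\]
Similarly, using $\utilde{F}\in C^1$ and $\utilde{F}(0)=0$,
\[
\int_0^b \utilde{F}'(z)g(z)\,\dr z = \utilde{F}(b)g(b) - \int_0^b \utilde{F}(z)g'(z)\,\dr z .
\]

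Subtracting the second identity from the first, the difference of the left-hand sides equals
\[
\bigl(F(b)-\utilde{F}(b)\bigr)g(b) + \int_0^b \bigl(F(z)-\utilde{F}(z)\bigr)\bigl(-g'(z)\bigr)\,\dr z .
\]
Each term is nonnegative. For the boundary term, $F(b)\ge\utilde{F}(b)$ by hypothesis and $g(b)\ge 0$. For the integral, $F-\utilde{F}\ge 0$ pointwise, and $-g'\ge 0$ because $g$ is decreasing and $C^1$. Hence the difference is nonnegative, which is exactly \eqref{eq:fgprod}.

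No step is a real obstacle. The only point to be careful about is justifying integration by parts when $f$ is merely locally integrable. This is covered by the absolute continuity of $F$; alternatively, one can write $F(z)g(z)=\int_0^z\bigl(f g + F g'\bigr)$ and apply Fubini directly.
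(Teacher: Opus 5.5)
Your proof is correct and follows essentially the same route as the paper, which integrates by parts on the difference $\int_0^b(\utilde{F}'-F')g$ in one step and then uses $\utilde{F}(0)=F(0)=0$, $\utilde{F}(b)\le F(b)$, $g(b)\ge 0$ and $(\utilde{F}-F)g'\ge 0$. Your remark that integration by parts is justified by the absolute continuity of $F$ (with $F'=f$ only almost everywhere) is a useful piece of care that the paper leaves implicit.
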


Further on, we have

\begin{lemma}\label{lem:FdNbound} 
Let $d\ge 3$ and $z\ge 0$. Then
\[
F_d^\Neu(z)\ge \frac{2}{(d-1)!} S_{d,+}(z),
\]
see \eqref{eq:Sdp} for the definition of the right-hand side.
\end{lemma}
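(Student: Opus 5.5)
The plan is to exploit the fact that $F_d^\Neu$ is the continuous piecewise-linear interpolant of its values at the nodes $z_m=m+\frac d2-1$. First I would prove the inequality at the nodes. Then I would extend it between nodes by a convexity argument, and handle separately the region where the right-hand side is non-positive.

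\emph{Step 1 (closed form at the nodes).} Telescoping \eqref{eq:kappadm} gives
\[
F_d^\Neu(z_m)=\sum_{j=0}^{m-1}\kappa_{d,j}=\binom{m+d-2}{d-1}+\binom{m+d-3}{d-1}.
\]
With $z=z_m$, the first binomial is $\frac{1}{(d-1)!}\prod_{i=0}^{d-2}\bigl(z+\frac d2-1-i\bigr)$. This is a product of $d-1$ factors placed symmetrically about the centre $z$. The second binomial is the same kind of product centred at $z-1$. Writing each as $\prod(c+a_i)$ with offsets $a_i$ symmetric about $0$, the product pairs up into factors $c^2-a_i^2$, times one extra factor $c$ when $d-1$ is odd.

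\emph{Step 2 (lower bound at the nodes).} For $c\ge\max|a_i|$, the Weierstrass product inequality $\prod(1-x_i)\ge 1-\sum x_i$ (valid for $x_i\in[0,1]$) gives
\[
\prod(c^2-a_i^2)\ge c^{2k}-\Bigl(\sum a_i^2\Bigr)c^{2k-2}.
\]
Here $\sum_i a_i^2=\frac{(d-1)d(d-2)}{12}$ when summed over all $d-1$ offsets; the paired form uses half of this.

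I would apply this to both products. The second product is centred at $z-1$, so I would re-expand it about $z$. The expected outcome is
\[
(d-1)!\,F_d^\Neu(z_m)\ge 2z^{d-1}-\frac{d-1}{2}z^{d-2}-\frac{(d-1)(d-2)(d-6)}{12}z^{d-3}=2S_d(z).
\]
The exact sub-leading coefficient $-\frac{d-1}{2}$ should come from averaging the two centres. The stated cubic coefficient is presumably what the second-order terms produce after combining the shift and the squared offsets.

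The dangerous part is the sign of the remaining lower-order terms. If they are not manifestly non-negative, I would group them pairwise, using $z\ge z_m\ge\frac d2-1$. For $d\le 6$ the $(d-6)_+$ truncation makes the target bound weaker, namely $S_{d,+}\le S_d$ there, which buys some slack. For $d=3,4,5$ I would simply verify the node inequality by direct expansion, since both sides are explicit low-degree polynomials in $m$.

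\emph{Step 3 (between nodes and small $z$).} On $[z_m,z_{m+1}]$ the function $F_d^\Neu$ is the chord of the node values. Hence if $S_{d,+}$ is convex on that interval, the chord lies above $\frac{2}{(d-1)!}S_{d,+}$ by Step 2.

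I would check convexity of $S_{d,+}$ on the set where $S_{d,+}\ge0$. This is the set $z\ge r_d$, where $r_d$ is the positive root of the quadratic bracket. There $z^{d-3}$ and the bracket are both non-negative and increasing, and $S_{d,+}''\ge0$ follows by differentiating the product.

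Where $S_{d,+}(z)\le0$ the claim is trivial, because $F_d^\Neu\ge0$. This covers in particular $z<z_0$, provided $r_d\ge\frac d2-1$, which I would verify directly. The one delicate case is an interval $[z_m,z_{m+1}]$ straddling $r_d$. There I would compare with $\max\bigl(0,\frac{2}{(d-1)!}S_{d,+}\bigr)$, which is still convex, so the same chord argument applies.

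I expect the main obstacle to be Step 2. The two symmetric products have different centres, and a uniform sign argument for all the discarded terms, valid for every $d\ge3$ and every $m\ge0$ including small $m$, may require care. For small $m$ the product inequality needs $c\ge\max|a_i|$, which fails for the shifted product. In those cases I would instead observe that $S_{d,+}(z_m)\le0$ for small $m$, since $r_d$ is of order $d^{3/2}$, so those nodes are covered by the trivial bound.
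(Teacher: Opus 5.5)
Your architecture (node inequality, chord/convexity between nodes, trivial region) is sound. Step 3 is a legitimate variant of the paper's argument. Since the positive root $r_d$ of the quadratic factor satisfies $r_d\ge\frac{d-1}{2}>\frac d2-1$ and $S_{d,+}'(r_d)>0$, the function $\max\{0,S_{d,+}\}$ is convex on $[0,\infty)$, so node inequalities do propagate to all $z$. The paper instead shows that $F_d^\Neu$ lies above the exact interpolating polynomial $\frac{2}{(d-1)!}(z-\frac12)\Pi_{d-2}(z-\frac d2)$, which is convex as a polynomial in $\zeta=z-\frac d2+1$ with non-negative coefficients. It then proves a polynomial inequality for all $z\ge\frac d2-1$.

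The gap is Step 2. The node inequality $(d-1)!\,F_d^\Neu(z_m)\ge 2S_d(z_m)$ \emph{is} the analytic content of the lemma. You only announce it as an ``expected outcome'' and leave the sign of the discarded terms to an unspecified pairwise grouping. Your displayed target is also misstated: $2S_d(z)=2z^{d-1}-(d-1)z^{d-2}-\frac{(d-1)(d-2)(d-6)}{12}z^{d-3}$. With $-\frac{d-1}{2}z^{d-2}$ in its place the inequality is false for large $m$, because $(d-1)!\,F_d^\Neu(z_m)\le 2(z_m-\frac12)^{d-1}$.

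The missing idea is that your two binomials share $d-2$ consecutive factors. Hence $(d-1)!\,F_d^\Neu(z_m)=(2z_m-1)\,\Pi_{d-2}(z_m-\frac d2)$, which is \eqref{eq:kappadmsum}. This is a \emph{single} product symmetric about $y=z_m-\frac12$, with the factor $y$ itself at the centre. One Weierstrass step (Lemma \ref{lem:Pi1}) gives $y\,\Pi_{d-2}(y-\frac{d-1}{2})\ge y^{d-1}-\alpha_{d-2}y^{d-3}$ for all $y\ge\frac{d-3}{2}$, with no small-$m$ exceptions. What remains is $y^{d-1}-\alpha_{d-2}y^{d-3}\ge S_d(y+\frac12)$. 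The paper proves this (Lemma \ref{lem:Pi3}) by expanding in powers of $y$: the coefficients of $y^{d-1},y^{d-2},y^{d-3}$ agree, and every lower coefficient on the right is checked to be negative.

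Your two-centre route can also be made to work, but not for free. The $z^{d-3}$ coefficients do match, since $\binom{d-1}{2}-\frac{d(d-1)(d-2)}{12}=-\frac{(d-1)(d-2)(d-6)}{12}$. However,
$\bigl(z^{d-1}-\alpha_{d-1}z^{d-3}\bigr)+\bigl((z-1)^{d-1}-\alpha_{d-1}(z-1)^{d-3}\bigr)-2S_d(z)=\binom{d-1}{3}\int_0^1\bigl(\frac d4-3(1-s)^2\bigr)(z-s)^{d-4}\,\dr s$.
This is manifestly non-negative only for $d\ge12$, and it vanishes for $d=3,4$. For $5\le d\le 11$ the integrand changes sign and the weight $(z-s)^{d-4}$ favours the negative part near $s=0$. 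A further estimate using $z\ge\frac d2$ is therefore needed, and that is precisely the step your proposal does not supply. Incidentally, the shifted product satisfies $c\ge\max|a_i|$ for every $m\ge1$; only the trivial node $m=0$ is exceptional.
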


\begin{proof}
We first check that the inequality holds for $z\le \frac{d}{2}-1$, when $F_d^\Neu(z)\equiv 0$. 
As  the quadratic term in the definition of  $S_{d,+}(z)$ is non-positive both at $z=0$, where it equals $-\frac{(d-1)(d-2)(d-6)_+}{24}$, and at $z= \frac{d}{2}-1$, where it equals $\frac{1}{2}-\frac{d}{4}-\frac{(d-1)(d-2)(d-6)_+}{24}$, we conclude that 
\[
0=F_d^\Neu(z)\ge \frac{2}{(d-1)!} S_{d,+}(z),\qquad z\in\left[0, \frac{d}{2}-1\right].
\]

We will now show that 
\begin{equation}\label{eq:FdNbound2}
F_d^\Neu(z)\ge\frac{2\left(z-\frac{1}{2}\right)\Pi_{d-2}\left(z-\frac{d}{2}\right)}{(d-1)!}.
\end{equation}
for $z\ge \frac{d}{2}-1$, where $\Pi_n$ is defined by \eqref{eq:Pin}.  Firstly, if $z=z_m:=m+\frac{d}{2}-1$, $m\in\mathbb{N}_0$, then
\[
F_d^\Neu(z_m)=\sum_{k=0}^{m-1} \kappa_{d,k}=\frac{2\left(m-1+\frac{d-1}{2}\right)\Pi_{d-2}(m-1)}{(d-1)!}
\]
by \eqref{eq:kappadmsum} with $l=m-1$ and therefore inequality \eqref{eq:FdNbound2} becomes equality; note that both sides vanish when $m=0$.
For $z\in(z_m, z_{m+1})$, the function $F_d^\Neu(z)$ is linear in $z$, and the right-hand side of \eqref{eq:FdNbound2} can be re-written as 
\[
\frac{2}{(d-1)!}\times\left(\zeta+\frac{d-3}{2}\right)\times\zeta\times(\zeta+1)\cdots\times(\zeta+d-3), \qquad \zeta:=z-\frac{d}{2}+1,
\]
which is a polynomial in $\zeta$ with positive coefficients, and therefore convex. Thus, \eqref{eq:FdNbound2} holds for all $z\ge \frac{d}{2}-1$.
We now apply  \eqref{eq:Pibounds2} with $n=d-2\ge 1$ and $x=z-\frac{d}{2}\ge -1$,
and therefore with 
$x+\frac{n}{2}+1=z$, to the right-hand side of \eqref{eq:FdNbound2}, which yields (still for $z\ge \frac{d}{2}-1$),
\[
F_d^\Neu(z)\ge \frac{2}{(d-1)!} S_d(z)\ge \frac{2}{(d-1)!} S_{d,+}(z).
\]
\end{proof}

We now apply Lemma \ref{lem:prodint} with $f=f_d^\Neu$, $g=G_\lambda$, $F=F_d^\Neu$, $\utilde{F}=\frac{2}{(d-1)!}S_{d,+}$, and $z=\mathcal{L}$ to the integral in the right-hand side of \eqref{eq:toestimate}.
Note that 
\[
\frac{2}{(d-1)!} S'_{d,+}(z) =
\begin{cases}
\frac{2}{(d-2)!}z^{d-2}-\frac{1}{(d-3)!}z^{d-3}-\frac{(d-6)_+}{12(d-4)!}z^{d-4}\qquad&\text{if }d\ge 4,\\
2z-1,\qquad&\text{if }d=3,
\end{cases}
\]
is continuous on $[0,+\infty)$ for all $d\ge 3$. We get
\begin{equation}\label{eq:intbound1}
\begin{split}
\int_0^{\mathcal{L}} f^\Neu_d(z)G_{\lambda}(z)\, \dr z &\ge \frac{2}{(d-1)!} \int_0^{\mathcal{L}} S'_{d,+}(z) G_{\lambda}(z)\, \dr z\\
&=\frac{2}{(d-1)!} \int_0^{\lambda}  S'_{d,+}(z) G_{\lambda}(z)\, \dr z - \frac{2}{(d-1)!} \int_{\mathcal{L}}^{\lambda} S'_{d,+}(z) G_{\lambda}(z)\, \dr z.
\end{split}
\end{equation}
The first integral in the right-hand side of \eqref{eq:intbound1} can be evaluated explicitly by Lemma \ref{lem:GintW} with $l=d-2,d-3,d-4$:
\begin{equation}\label{eq:explint}
\begin{split}
\frac{2}{(d-1)!} \int_0^{\lambda} S'_{d,+}(z) G_{\lambda}(z)\, \dr z&=\frac{2}{(d-2)!}\int_0^{\lambda}z^{d-2}G_{\lambda}(z)\, \dr z - \frac{1}{2}\cdot \frac{2}{(d-3)!}\int_0^{\lambda}z^{d-3}G_{\lambda}(z)\,\dr z \\&\qquad - \frac{(d-6)_{+}}{24}\cdot\frac{2}{(d-4)!}\int_0^{\lambda}z^{d-4}G_{\lambda}(z)\, \dr z \\
&=w_d\lambda^d-\frac 12w_{d-1}\lambda^{d-1}-\frac{(d-6)_+}{24}w_{d-2}\lambda^{d-2}.
\end{split}
\end{equation}
For the second integral in the right-hand side of \eqref{eq:intbound1} we use the obvious bounds
\[
\frac{2}{(d-1)!} \int_{\mathcal{L}}^{\lambda} S'_{d,+}(z)  G_{\lambda}(z)\, \dr z  \le \frac{2}{(d-2)!}\int_{\mathcal{L}}^{\lambda} z^{d-2}  G_{\lambda}(z)\, \dr z < \frac{2d}{(d-1)!}\int_{\mathcal{L}}^{\lambda} z^{d-2}  G_{\lambda}(z)\, \dr z.
\]

Combining  \eqref{eq:toestimate}, \eqref{eq:intbound1}, and \eqref{eq:explint} with \eqref{eq:Q}, and using \eqref{eq:wratios} and \eqref{eq:factwratios}, we arrive at
\[
\mathcal{Q}_d(\lambda)\ge 
-\left(\frac{d\mathcal{R}_d}{2} \lambda^{-1} + \frac{d^2(d-6)_+}{24\mathcal{R}_d}  \lambda^{-2}\right) 
-\frac{\sqrt{2 \pi } d^{5/2}}{\lambda^d}\int_{\mathcal{L}}^{\lambda} z^{d-2} G_{\lambda}(z)\, \dr z
+\frac{1}{4\mathcal{R}_d w_d  \lambda^d}\sum_{n=0}^{\mathcal{M}-1}\kappa_{d,n},
\]
completing the proof of Proposition \ref{prop:Qbound123}.

\subsection{Proof of Proposition \ref{prop:Q1b}}
In order to prove \eqref{eq:T1}, we use the explicit expression \eqref{eq:Q1} and estimates \eqref{eq:Rdbounds} from Lemma \ref{lem:Gammaratio1}, immediately giving the result.

Monotonicity of $\mathcal{T}_{d,1}(\tau)$ in $\tau$ is obvious, for the proof of monotonicity in $d$, see \S\ref{proof:prop:Q1b}.

\subsection{Proof of Proposition \ref{prop:Q2b}}

We start with 
\begin{lemma}\label{lem:estell} 
With $\mathcal{L}=\mathcal{L}_d(\lambda)$ defined by \eqref{eq:defofell0}, we have, for all $\lambda>0$,
\[
\mathcal{L}\ge \lambda-\omega\lambda^{1/3}=:\utilde{\mathcal{L}}=\utilde{\mathcal{L}}(\lambda),
\]
where $\omega$ given by \eqref{eq:omega} is defined as  $\omega:=\left(4c_1\right)^{-2/3}$ with $c_1$  given by \eqref{eq:c1c2}.
\end{lemma}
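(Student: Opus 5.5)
The plan is to reduce the claim to a single lower bound for $G_\lambda$ near the turning point $z=\lambda$, and then use monotonicity of $G_\lambda$.

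By \eqref{eq:defofm0} and \eqref{eq:defofell0} we have $G_\lambda(\mathcal{L})<\frac14$ in all cases, including $\mathcal{M}=0$, where $\mathcal{L}=\frac d2-1$. Differentiating \eqref{eq:Glambda} gives $G_\lambda'(z)=-\frac1\pi\arccos\frac z\lambda\le 0$ on $[0,\lambda]$, and $G_\lambda\equiv0$ for $z\ge\lambda$. Hence $G_\lambda$ is non-increasing on $[0,\infty)$, and strictly decreasing on $[0,\lambda]$. So it suffices to show that
\[
G_\lambda\bigl(\utilde{\mathcal{L}}\bigr)\ge\tfrac14 \qquad\text{whenever } \utilde{\mathcal{L}}\ge0 .
\]
Indeed, if $\utilde{\mathcal{L}}<0$ there is nothing to prove, since $\mathcal{L}\ge\frac12$. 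If $\utilde{\mathcal{L}}\ge0$ and $\mathcal{L}\le\utilde{\mathcal{L}}$, monotonicity would give $G_\lambda(\mathcal{L})\ge G_\lambda(\utilde{\mathcal{L}})\ge\frac14$, a contradiction. (In fact one gets the strict inequality $\mathcal{L}>\utilde{\mathcal{L}}$.)

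The key estimate is
\[
G_\lambda(z)\ge c_1\,\lambda^{-1/2}(\lambda-z)^{3/2},\qquad 0\le z\le\lambda,
\]
with $c_1=\frac{2\sqrt2}{3\pi}$. I expect this is exactly the constant in \eqref{eq:c1c2}; if so, I would simply quote that property from Appendix \ref{app:G}. To prove it directly, write
\[
G_\lambda(z)=\frac1\pi\int_z^\lambda\arccos\frac t\lambda\,\dr t,
\]
which follows from $G_\lambda(\lambda)=0$ and the derivative formula. Then use the elementary inequality $\arccos(1-u)\ge\sqrt{2u}$ for $u\in[0,1]$. To check it, put $u=1-\cos\theta=2\sin^2\frac\theta2$; the inequality becomes $\theta\ge2\sin\frac\theta2$, which is obvious. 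Integrating gives
\[
G_\lambda(z)\ge\frac{\sqrt2}{\pi\sqrt\lambda}\int_z^\lambda(\lambda-t)^{1/2}\,\dr t=\frac{2\sqrt2}{3\pi}\,\lambda^{-1/2}(\lambda-z)^{3/2}.
\]

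Finally, substitute $z=\utilde{\mathcal{L}}$, so that $\lambda-z=\omega\lambda^{1/3}$. Then $(\lambda-z)^{3/2}=\omega^{3/2}\lambda^{1/2}$, and the powers of $\lambda$ cancel:
\[
G_\lambda\bigl(\utilde{\mathcal{L}}\bigr)\ge c_1\omega^{3/2}=c_1(4c_1)^{-1}=\tfrac14 .
\]
As a consistency check, $(4c_1)^{-2/3}=\bigl(\frac{3\pi}{8\sqrt2}\bigr)^{2/3}=2^{-7/3}(3\pi)^{2/3}$, which matches \eqref{eq:omega}.

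I do not expect a real obstacle. The only points needing care are the elementary arccos inequality, and confirming that $c_1$ in \eqref{eq:c1c2} is this constant; if the appendix gives a sharper $c_1$, the same argument applies verbatim. The edge cases $\mathcal{M}=0$ and $\utilde{\mathcal{L}}<0$ are handled by the monotonicity argument in the first step.
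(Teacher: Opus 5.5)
Your proposal is correct and is essentially the paper's argument. Both rest on the lower bound $G_\lambda(z)>c_1\lambda\left(1-\frac{z}{\lambda}\right)^{3/2}$ from Lemma \ref{lem:Gbounds}, which you re-derive from the same inequality $\arccos v\ge\sqrt{2(1-v)}$, combined with $G_\lambda(\mathcal{L})<\frac14$. The only difference is that you evaluate the bound at $\utilde{\mathcal{L}}$ and use monotonicity, whereas the paper inserts $\mathcal{L}$ directly and solves for it; your route also explicitly covers the trivial case $\mathcal{L}\ge\lambda$, which lies outside the range $0<z<\lambda$ of Lemma \ref{lem:Gbounds}.
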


\begin{proof}
We have, using  \eqref{eq:defofell0} and the lower bound in Lemma \ref{lem:Gbounds},
\[
c_1\lambda\left(1-\frac{\mathcal{L}}{\lambda}\right)^{3/2}<G_\lambda(\mathcal{L})<\frac{1}{4},
\]
from where the result follows by solving the inequality with respect to $\mathcal{L}$.
\end{proof}

Set 
\begin{equation}\label{eq:Theta0}
\Theta=\Theta(\lambda):=1-\frac{\utilde{\mathcal{L}}}{\lambda}=\omega \lambda^{-2/3},
\end{equation} 
and impose the condition $\Theta(\lambda)\le 1$.

We use the upper bound on $G_\lambda(z)$ from Lemma \ref{lem:Gbounds} and the lower bound on $\mathcal{L}$ from Lemma \ref{lem:estell} to get
\[
\int_{\mathcal{L}}^{\lambda} z^{d-2} G_\lambda(z)\,\dr z \le \lambda \int_{\utilde{\mathcal{L}}}^{\lambda} z^{d-2} \left(c_1\left(1-\frac{z}{\lambda}\right)^{3/2}+c_2\left(1-\frac{z}{\lambda}\right)^{5/2}\right)\,\dr z.
\]

We make the change of variables $z=\lambda(1-\theta)$.
As the interval $z\in[\utilde{\mathcal{L}}(\lambda),\lambda]$ corresponds to $\theta\in[0,\Theta(\lambda)]\subset [0,1]$, we have
\begin{equation}\label{eq:in2b2}
\lambda \int_{\utilde{\mathcal{L}}}^{\lambda} z^{d-2} \left(c_1\left(1-\frac{z}{\lambda}\right)^{3/2}+c_2\left(1-\frac{z}{\lambda}\right)^{5/2}\right)\,\dr z 
= \lambda^d \int_0^{\Theta} (1-\theta)^{d-2}\left(c_1\theta^{3/2}+c_2\theta^{5/2}\right)\,\dr\theta.
\end{equation}
For $d\ge 3$ and $0\le \theta\le 1$, we have
\[ 
(1-\theta)^{d-2} \le 1-(d-2)\theta +\frac{(d-2)(d-3)}{2}\theta^2.
\]
Hence, substituting this bound to the right-hand side of \eqref{eq:in2b2},  integrating, and multiplying by the factor $\frac{\sqrt{2 \pi } d^{5/2}}{\lambda^d}$ appearing in \eqref{eq:Q2}, we obtain
\[
\begin{split}
Q_{d,2}(\lambda)\le \sqrt{2 \pi } d^{5/2} &\left(\frac{2}{5} c_1 \Theta ^{5/2}
-\frac{2}{7} \Theta ^{7/2} \left(c_1 (d-2)-c_2\right)\right.\\
&\left.+\frac{1}{9} (d-2) \Theta ^{9/2} \left(c_1 (d-3)-2 c_2\right)
+\frac{1}{11} c_2 (d-3) (d-2) \Theta ^{11/2}
\right).
\end{split}
\]
Substituting constants $c_1$ and $c_2$ from \eqref{eq:c1c2} and $\Theta$ from \eqref{eq:Theta0}, and switching from $\lambda$ to  $\tau$ in accordance with \eqref{eq:lambdatau}, we arrive, after some simplifications, at \eqref{eq:T2}.

For the proof of monotonicity of $\mathcal{T}_{d,2}(\tau)$ in both $\tau$ and $d$, subject to condition $\tau\ge 1$, see \S\ref{proof:prop:Q2b}.

\subsection{Proof of Proposition \ref{prop:Q3b}}

To start with, we apply \eqref{eq:kappadmsum} with $l=\mathcal{M}-1$ to get 
\begin{equation}\label{eq:sum1}
\sum_{j=0}^{\mathcal{M}-1} \kappa_{d,j}=\frac{2\left(\mathcal{M}-1+\frac{d-1}{2}\right)}{(d-1)!}\Pi_{d-2}(\mathcal{M}-1).
\end{equation}
We now estimate the right-hand side of \eqref{eq:sum1} using Lemma \ref{lem:Pi3} with $x=\mathcal{M}-1$, $n=d-2$, and therefore with $x+\frac{n}{2}+1=\mathcal{M}+\frac{d}{2}-1=\mathcal{L}$, giving, taking into account \eqref{eq:factwratios},
\[
\begin{split}
Q_{d,3}(\lambda)=\frac{1}{4\mathcal{R}_d w_d  \lambda^d}\sum_{j=0}^{\mathcal{M}-1} \kappa_{d,j}&\ge \frac{1}{2} \sqrt{\frac{\pi }{2}} d^{3/2} \lambda^{-d}\left(\mathcal{L}^{d-1} -\frac{d-1}{2}\mathcal{L}^{d-2} - \frac{(d-1)(d-2)(d-6)}{24}\mathcal{L}^{d-3}\right)\\
&= \frac{1}{2} \sqrt{\frac{\pi }{2}} d^{3/2} \lambda^{-d} S_d(\mathcal{L})
\end{split}
\]

We use the following result, see \S\ref{proof:lem:monder} for the proof.
\begin{lemma}\label{lem:monder}
Let $d\ge 3$ and let $\lambda\ge d^{3/2}$ (this condition is not necessary but sufficient). Then $S'_d(z)>0$ for all $z\ge \utilde{\mathcal{L}}(\lambda)$.
\end{lemma}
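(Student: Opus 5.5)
Since $S'_d(z)=z^{d-4}\,q(z)$ with the quadratic
\[
q(z):=(d-1)z^2-\frac{(d-1)(d-2)}{2}z-\frac{(d-1)(d-2)(d-3)(d-6)}{24},
\]
and $z^{d-4}>0$ for $z>0$, positivity of $S'_d$ for $z\ge\utilde{\mathcal{L}}(\lambda)>0$ reduces to showing that $\utilde{\mathcal{L}}(\lambda)$ lies to the right of the larger root of $q$. Here $z^{d-4}$ is read as $z^{-1}$ when $d=3$; the factorisation still holds for $z>0$. Dividing by $d-1$, we need
\[
z^2-\frac{d-2}{2}z-\frac{(d-2)(d-3)(d-6)}{24}>0 .
\]
The larger root is
\[
z_+=\frac{d-2}{4}+\sqrt{\frac{(d-2)^2}{16}+\frac{(d-2)(d-3)(d-6)}{24}}.
\]
If $d\le 6$, the constant term is nonnegative and we need only $z>\frac{d-2}{2}$. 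For large $d$ the root behaves like $z_+\approx d^{3/2}/\sqrt{24}+O(d)$. In all cases a crude bound such as $z_+\le \frac{d-2}{2}+\frac{d^{3/2}}{\sqrt{24}}$ should suffice, using $\sqrt{a+b}\le\sqrt a+\sqrt b$ and $(d-2)(d-3)(d-6)\le d^3$.

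Next, I would use that $\utilde{\mathcal{L}}(\lambda)=\lambda-\omega\lambda^{1/3}$ is increasing in $\lambda$ for $\lambda\ge (\omega/3)^{3/2}$, which certainly covers $\lambda\ge d^{3/2}\ge 3^{3/2}$. Hence it suffices to check the case $\lambda=d^{3/2}$, where
\[
\utilde{\mathcal{L}}=d^{3/2}-\omega d^{1/2}, \qquad \omega\approx0.8853 .
\]

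The remaining task is to compare $d^{3/2}-\omega\sqrt d$ with $\frac{d-2}{2}+d^{3/2}/\sqrt{24}$. Since $1-1/\sqrt{24}\approx0.796$, the difference is at least
\[
0.796\,d^{3/2}-\tfrac{d}{2}-0.886\sqrt d+1 .
\]
Writing $s=\sqrt d\ge\sqrt3$, this is $s\,(0.796s^2-0.5s-0.886)+1$, and the bracket is positive for $s\ge\sqrt3$, since there it is about $2.39-0.87-0.89>0$. To be rigorous I would replace $\omega$ by the rational upper bound $\frac{9}{10}$ and $1/\sqrt{24}$ by $\frac{21}{100}$, and then verify the polynomial inequality in $s$ directly.

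The main obstacle is making the bound on $z_+$ uniform in $d$ without losing too much for small $d$. If the crude bound fails at $d=3,4,5$, I would treat $d\le 6$ separately: there $z_+\le\frac{d-2}{2}$, and one only needs $d^{3/2}-\omega\sqrt d>\frac{d-2}{2}$, which is immediate.
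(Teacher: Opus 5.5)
Your proof is correct and follows essentially the same route as the paper: factor $S_d'(z)=z^{d-4}q(z)$, use that $\utilde{\mathcal{L}}$ is increasing to reduce to $\lambda=d^{3/2}$, and then show that the quadratic factor is positive for $z\ge d^{3/2}-\omega\sqrt d$. The only difference is in that last step, and it is cosmetic. You bound the larger root $z_+$ explicitly. The paper avoids $z_+$: it uses $\omega<1$ to get $\utilde{\mathcal{L}}>\sqrt d\,(d-1)$, and then $z^2-\frac{d-2}{2}z>\frac{z^2}{2}\ge\frac{d(d-1)^2}{2}$, which directly dominates the constant term. One small numerical point: $1-1/\sqrt{24}\approx0.7959<0.796$, so your intermediate constant $0.796$ is not quite a lower bound. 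Your rational replacements $\frac{9}{10}$ and $\frac{21}{100}$ fix this.
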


As a consequence, 
\[
\begin{split}
Q_{d,3}(\lambda) &\ge \frac{1}{2} \sqrt{\frac{\pi }{2}} d^{3/2} \lambda^{-d} S_d\left(\utilde{\mathcal{L}}\right) =  \frac{1}{2} \sqrt{\frac{\pi }{2}} d^{3/2} \lambda^{-d} S_d\left(\lambda(1-\Theta(\lambda))\right)\\
&= \frac{1}{2} \sqrt{\frac{\pi }{2}} d^{3/2} \lambda^{-1} \left(1-\Theta(\lambda)\right)^{d-1} \left(1-\frac{d-1}{2\lambda(1-\Theta(\lambda))} - \frac{(d-1)(d-2)(d-6)}{24\lambda^2(1-\Theta(\lambda))^2}\right)
\end{split}
\]
By \eqref{eq:Theta0}, $\Theta\left(\tau^3 d^{3/2}\right)=\frac{\omega}{d\tau^2}<\frac{1}{d}\le \frac{1}{3}$ for $\tau\ge 1$, and therefore by Lemma \ref{lem:log} with $x=\Theta(\lambda)$,
\[
\left(1-\Theta\right)^{d-1} = \er^{(d-1)\log \left(1-\Theta\right)} \ge\er^{(d-1)\left(-\Theta-\frac{\Theta^2}{2(1-\Theta)}\right)} = \er^{-d\Theta+\frac{\Theta(2-(d+1)\Theta)}{2(1-\Theta)}}>\er^{-d\Theta}=\er^{-\omega\tau^{-2}},
\]
where we have used $\Theta \le \frac 1d < \frac{2}{d+1}$ in the last inequality. 
Thus,
\[
\tau^3 Q_{d,3}\left(\tau d^{3/2}\right)\ge \frac{1}{2} \sqrt{\frac{\pi }{2}}\er^{-\omega\tau^{-2}} \left(1-\frac{d-1}{2 \sqrt{d} \left(d \tau ^3-\tau  \omega \right)}-\frac{(d-1)(d-2)(d-6)}{24 d \left(d \tau ^3-\tau  \omega \right)^2}\right),
\]
proving the first part of Proposition  \ref{prop:Q3b}. For the proof of monotonicity in $\tau\ge 1$ and $d$ see \S\ref{proof:prop:Q3b}.

\section{Low $\lambda$s and variational estimates}\label{sec:lowlambda}

\subsection{Summary}

In this section, using variational techniques, we establish

\begin{theorem}\label{thm:low}
Let $d\ge 3$. P\'olya's inequality \eqref{eq:Polyatau} holds for all $\tau\in\left[0, \tau^*\right]$, where $\tau^*=\frac{28}{25}$.
\end{theorem}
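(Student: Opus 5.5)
We need $\mathcal N^\Neu_{\ball d}(\lambda)>w_d\lambda^d$ for $\lambda\le {\tau^*}^3d^{3/2}$. The plan is to bound the counting function from below by counting only the \emph{first} radial eigenvalue in each angular momentum sector. The variational principle \eqref{eq:varpr} gives $p'_{d,m,1}\le\sqrt{A_d(m)}$. Hence, for every $M$ with $A_d(m)\le\lambda^2$ for all $0\le m\le M$,
\[
\mathcal N^\Neu_{\ball d}(\lambda)\ge\sum_{m=0}^{M}\kappa_{d,m}.
\]
By \eqref{eq:kappadmsum}, the right-hand side equals $\frac{2(M+\frac{d-1}{2})}{(d-1)!}\Pi_{d-2}(M)$, which is roughly $\frac{2}{(d-1)!}(M+\frac d2)^{d-1}$. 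We then compare this with $w_d\lambda^d=\lambda^d/(2^d\Gamma(\frac d2+1)^2)$.

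\textbf{Very small $\lambda$.} For $\lambda<p'_{d,1,1}$ only the constant eigenfunction is counted, so $\mathcal N^\Neu_{\ball d}(\lambda)\ge 1$. We check directly that $w_d\lambda^d<1$ on this range. It suffices to use the upper bound $p'_{d,1,1}\le\sqrt{A_d(1)}$ with $\rho(x)=x$, which gives $A_d(1)=d+2$. Indeed $w_d(d+2)^{d/2}<1$ follows from Stirling, since $\Gamma(\frac d2+1)^2 2^d\gg (d+2)^{d/2}$. For $\lambda\ge p'_{d,1,1}$ the count jumps to at least $1+d$, and we continue in the same way.

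\textbf{Test functions.} For general $m$ we first try $\rho(x)=x^m$. The integrals are explicit:
\[
A_d(m)=\frac{\bigl(m^2+m(m+d-2)\bigr)(2m+d)}{2m+d-2},
\]
which is approximately $2m^2+O(md)$. This yields $M\approx\lambda/\sqrt2$. Comparing $(\lambda/\sqrt2)^{d-1}\frac{2}{(d-1)!}$ with $w_d\lambda^d$ shows this choice is adequate only when $\lambda\lesssim d$. That is too small compared with the needed range $\lambda\sim d^{3/2}$.

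The dimension-dependent test functions mentioned in the introduction are designed to fix this. We take $\rho(x)=x^{m}(1+\beta x^2)$, or $x^{\gamma}$ with $\gamma=\gamma(m,d)$ optimised, or a sum of such terms. The aim is $A_d(m)\le m^2+c\,m\sqrt{d}+\dots$, mimicking the true asymptotic $p'_{d,m,1}\approx \nu+c\nu^{1/3}$ with $\nu=m+\frac d2-1$. The desired bound is of the form $A_d(m)\le (m+\frac d2+C d^{1/3}m^{1/3})^2$-type, valid uniformly up to $m\lesssim d^{3/2}$. All integrals are rational in $m,d,\beta$, so optimising $\beta$ gives an explicit bound. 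We then solve $A_d(M)\le\lambda^2$ for the largest admissible $M(\lambda)$.

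\textbf{Verifying the inequality.} With $\tau$ as in \eqref{eq:lambdatau}, we must show
\[
\frac{2(M+\frac{d-1}{2})\Pi_{d-2}(M)}{(d-1)!}>w_d\tau^{3d}d^{3d/2}\qquad\text{for }\tau\le\tau^*.
\]
Since the left-hand side is a step function and the right-hand side is increasing, it suffices to check this at the right endpoints of the $\lambda$-intervals between consecutive thresholds $\sqrt{A_d(M)}$. There $\lambda=\sqrt{A_d(M+1)}$ and the count is $\sum_{m\le M}\kappa_{d,m}$. This reduces to a two-parameter inequality in $(d,M)$. We handle it with Lemma \ref{lem:Pi3} together with Stirling-type bounds on $\Gamma(\frac d2+1)$ from Lemma \ref{lem:Gammaratio1}, taking logarithms and showing the difference is positive. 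For large $d$ we use the asymptotic regime $M\sim\tau^3d^{3/2}$. Finitely many small $d$ are checked by exact rational arithmetic.

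\textbf{Main obstacle.} The hard step is choosing the dimension-dependent test function so that the bound is sharp enough up to $\tau=28/25$ uniformly in $d$. Near $\lambda\sim d^{3/2}$ the margin is thin: losses of order $m^{1/3}d^{1/3}$ in $\sqrt{A_d(m)}$, raised to the power $d$, translate into exponential factors comparable to $\er^{-\omega\tau^{-2}}$. I would first try $\rho=x^{m}(1-\beta x^2)$ with $\beta$ chosen at the scale $m^{-1/3}$ times a dimension factor, and fall back on a piecewise choice if needed.
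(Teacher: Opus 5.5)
Your outer frame is the paper's: count only the first zeros $p'_{d,m,1}$, bound them via \eqref{eq:varpr}, sum multiplicities with \eqref{eq:kappadmsum}, and use $\rho=x$ to cover $w_d\lambda^d\le d+1$. The decisive step is missing, though. You never produce a test function whose Rayleigh quotient is sharp enough, uniformly in $d$, up to $\tau=\frac{28}{25}$, and the guidance you give for finding one points the wrong way.

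\emph{Your stated aim is impossible.} The bound $A_d(m)\le m^2+cm\sqrt d$ cannot hold, since $A_d(m)\ge (p'_{d,m,1})^2>m(m+d-2)$ by \eqref{eq:LS}.

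\emph{The admissible loss is misjudged by a factor $d^{1/3}$.} Suppose $\sqrt{A_d(m)}\le \nu+L$ with $\nu=m+\frac d2-1$. Then, as in \eqref{eq:N1stbound}, the ratio to $w_d\lambda^d$ is roughly $\sqrt{2\pi}\,\tau^{-3}\er^{-Ld/\lambda}$. So you need $L\le c\lambda^{1/3}=c\tau\sqrt d$ with $c\tau^{-2}$ below $\log\bigl(\sqrt{2\pi}\,\tau^{-3}\bigr)\approx 0.58$ at $\tau=\frac{28}{25}$. A loss $Cd^{1/3}m^{1/3}\sim C\tau d^{5/6}$ instead gives a factor $\er^{-C\tau^{-2}d^{1/3}}\to0$.

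\emph{Your preferred family $\rho=x^m(1\pm\beta x^2)$ cannot work at all.} Writing $\rho=x^mg$ and integrating by parts gives
\[
A_d(m)=\frac{m\,g(1)^2+\int_0^1 x^{2m+d-1}g'(x)^2\,\dr x}{\int_0^1 x^{2m+d-1}g(x)^2\,\dr x}.
\]
The weight $x^{2m+d-1}$ confines the test function to a layer of width $\sim1/m$. For $g=1-\beta x^2$ and $m\gg d$, even the best $\beta$ gives $A_d(m)\approx1.17\,m^2$. That is a loss of order $m$ in $\sqrt{A_d(m)}$, rather than $m^{1/3}$.

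What you are missing is the right scale. For $\nu\sim\lambda\sim d^{3/2}$, the first eigenfunction in each sector sits in an Airy-type boundary layer of width $\sim\lambda^{-2/3}\sim1/d$. So the test function must live on the scale $1/d$, not $1/m$, and it can be taken independent of $m$. The paper proceeds as follows.
\begin{itemize}
\item For $\tau\le\frac{17}{20}$ and $d\ge4$ it uses $\rho\equiv1$ (Propositions~\ref{prop:constant1} and~\ref{prop:constant2}).
\item For $d=3$ at low $\tau$ it uses the trivial count (Proposition~\ref{prop:verylow}).
\item For $\tau\in[\frac45,\frac{28}{25}]$ it uses $\rho(x)=x^{\frac{1-d}{2}}\bigl(x-1+\frac2d\bigr)_+^{3/2}$, supported in $[1-\frac2d,1]$. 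Its loss factor tends to $\exp\bigl(-\frac25-\frac1{16\tau^6}\bigr)$, cf.~\eqref{eq:q2lb1}, and this is what makes $\tau=\frac{28}{25}$ reachable.
\end{itemize}

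The paper also makes the verification finite. Replacing $\entire{\mathfrak X}$ by $\mathfrak X-1$ yields a smooth lower bound $\mathfrak F_d(\lambda)$. This bound is monotone (Lemma~\ref{lem:constantmonotone}) or unimodal (Lemma~\ref{lem:piecewise-unimodal}), so only the endpoints of each $\tau$-interval need checking. Lower bounds increasing in $d$ then reduce these to finitely many dimensions. Your plan of checking every threshold $\sqrt{A_d(M+1)}$ leaves an unbounded two-parameter family with no such reduction.

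Of the options you list, only $x^\gamma$ with $0\le\gamma=O(d)$ lives on the right scale; this family includes $\rho\equiv1$. Its asymptotic margin at $\tau=\frac{28}{25}$ is noticeably thinner than that of the paper's function, and the whole uniform verification would still have to be carried out.
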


Theorem \ref{thm:low} will follow from Propositions \ref{prop:verylow}, \ref{prop:varsummary}, \ref{prop:constant1}, \ref{prop:constant2}, and \ref{prop:othertest}.

\subsection{Very low $\lambda$s}

Before proceeding to variational estimates proper, we remark that $\mu_1\left(\ball{d}\right)=0$ and, 
$\mu_2\left(\ball{d}\right) \le d+2 \le w_d^{-\frac{2}{d}}$, where the first inequality follows from the variational principle by taking a trial function $x_1$, and the second one is an elementary calculation.
As for the ball in dimension $d$ we have 
\[
\mu_2\left(\ball{d}\right)=\mu_3\left(\ball{d}\right)=\dots=\mu_{d+1}\left(\ball{d}\right),
\]
we can guarantee that 
\[
\mathcal{N}^\Neu_{\ball{d}}(\lambda)\ge
\begin{cases}
1, \qquad& 0\le \lambda< w_d^{-\frac{1}{d}},\\
d+1, \qquad& \lambda\ge w_d^{-\frac{1}{d}}.
\end{cases}
\]
and therefore $\mathcal{N}^\Neu_{\ball{d}}(\lambda) \ge w_d \lambda^d$ for all $\lambda$ such that $w_d \lambda^d \le d+1$, that is for 
\[
0\le \lambda\le 2 (d+1)^{\frac{1}{d}}\Gamma \left(\frac{d}{2}+1\right)^{\frac{2}{d}}=:\lambda_d^\#.
\]
After changing the parameter  according to \eqref{eq:lambdatau}, we arrive at
\begin{prop}\label{prop:verylow} 
P\'olya's inequality
\[
\mathcal{N}^\Neu_{\ball{d}}\left(\tau^3 d^{3/2}\right)-w_d \tau^{3d} d^{3d/2}\ge 0
\]
holds for all 
\[
\tau\le \frac{2^{\frac{1}{3}}(d+1)^{\frac{1}{3d}}\,\Gamma\left(\frac{d}{2}+1\right)^{\frac{2}{3d}}}{\sqrt{d}}  =: \tau_d^{\#}.
\]
\end{prop}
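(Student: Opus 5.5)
The plan is to turn the counting argument sketched just before the statement into a proof: I will establish the two eigenvalue facts it relies on, and then check that the threshold $\lambda_d^\#$ corresponds exactly to $\tau_d^\#$.

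First, I show $\mu_2\left(\ball{d}\right)\le d+2$. I use the test function $u=x_1$, which in \eqref{eq:varpr} corresponds to $\rho(x)=x$ with $m=1$. Then
\[
A_d(1)=\frac{\int_0^1\left(x^{d-1}+(d-1)x^{d-1}\right)\dr x}{\int_0^1 x^{d+1}\,\dr x}=\frac{d/d}{1/(d+2)}=d+2 .
\]
This gives $\mu_{d,1,1}\le d+2$. The eigenvalue $\mu_{d,1,1}$ has multiplicity $\kappa_{d,1}=d$. Since $\mu_{d,0,1}=0$, the lowest $d+1$ eigenvalues, counted with multiplicity, are all $\le d+2$. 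Hence $\mathcal{N}^\Neu_{\ball{d}}(\lambda)\ge d+1$ whenever $\lambda^2\ge d+2$, and $\mathcal{N}^\Neu_{\ball{d}}(\lambda)\ge1$ for all $\lambda\ge0$.

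Second, I verify the elementary inequality $d+2\le w_d^{-2/d}=4\Gamma\left(\frac d2+1\right)^{4/d}$. This amounts to $\Gamma\left(\frac d2+1\right)^{2/d}\ge\frac{\sqrt{d+2}}{2}$. For $d=3,4$ I check it directly: for $d=3$ it reads $(3\sqrt\pi/4)^{2/3}\approx1.21\ge1.12$, and for $d=4$ it reads $2^{1/2}\ge\sqrt6/2$. For larger $d$ I use $\Gamma(x+1)\ge(x/\mathrm{e})^x$, which gives $\Gamma\left(\frac d2+1\right)^{2/d}\ge\frac{d}{2\mathrm{e}}$. This exceeds $\frac{\sqrt{d+2}}2$ as soon as $d^2\ge\mathrm{e}^2(d+2)$, i.e. for $d\ge10$. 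The remaining cases $5\le d\le9$ are a finite check, or can be handled with the sharper Stirling bound $\Gamma(x+1)\ge\sqrt{2\pi x}(x/\mathrm{e})^x$. This step is the only mildly fiddly one, and I expect it to be routine.

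Finally, I combine the two facts in two cases. If $\lambda<w_d^{-1/d}$, then $w_d\lambda^d<1\le\mathcal{N}^\Neu_{\ball{d}}(\lambda)$. If $w_d^{-1/d}\le\lambda\le\lambda_d^\#$, then $\lambda^2\ge w_d^{-2/d}\ge d+2$, so $\mathcal{N}^\Neu_{\ball{d}}(\lambda)\ge d+1\ge w_d\lambda^d$. The last inequality holds because $w_d\lambda^d\le d+1$ is equivalent to $\lambda\le\lambda_d^\#=2(d+1)^{1/d}\Gamma\left(\frac d2+1\right)^{2/d}$. Substituting $\lambda=\tau^3d^{3/2}$, the condition $\lambda\le\lambda_d^\#$ becomes $\tau^3\le 2(d+1)^{1/d}\Gamma\left(\frac d2+1\right)^{2/d}d^{-3/2}$. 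Taking cube roots gives exactly $\tau\le\tau_d^\#$, which proves the claim.
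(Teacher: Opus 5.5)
Your proposal is correct and follows the paper's argument: $\mu_1=0$, the test function $x_1$ (your $\rho(x)=x$, $m=1$ in \eqref{eq:varpr}) gives $\mu_{d,1,1}\le d+2$ with multiplicity $\kappa_{d,1}=d$, then $d+2\le w_d^{-2/d}$, so $\mathcal{N}\ge d+1\ge w_d\lambda^d$ for $\lambda\le\lambda_d^\#$, and finally the substitution $\lambda=\tau^3d^{3/2}$. The paper dismisses $d+2\le w_d^{-2/d}$ as ``an elementary calculation'', whereas you actually verify it (bound $\Gamma(x+1)\ge(x/\mathrm{e})^x$ for $d\ge10$, direct checks for small $d$), and counting the $d$-fold eigenvalue $\mu_{d,1,1}$ directly means you need not identify $\mu_2,\dots,\mu_{d+1}$ explicitly.
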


\begin{remark} In what follows, we will only have to use Proposition \ref{prop:verylow} for $d=3$, in which case we note that $ \tau_3^{\#}=\frac{(2 \pi )^{1/9}}{3^{5/18}}>\frac{17}{20}$.
\end{remark}

\subsection{Variational approach: the algorithm}
Let us count, in the right-hand side of \eqref{eq:defofneumcount}, only the \emph{first} zeros $p'_{d,m,1}$, which would obviously give another lower bound for  $\mathcal N^\Neu_{\mathbb B^d}(\lambda)$. With account of $p'_{d,0,1}=0$, we get 
\begin{equation}\label{eq:firstcount}
\mathcal{N}^\Neu_{\mathbb B^d}(\lambda) \ge 1+\sum_{m\in\mathbb{N}:  p'_{d,m,1}\le\lambda}\kappa_{d,m}=:\mathcal{N}^\Neu_{d, 1\mathrm{st}}(\lambda).
\end{equation} 
We rewrite the variational principle \eqref{eq:varpr}, setting $\eta:=m+\frac{d}{2}-1$, as
\[
\mathcal{A}_d\left(\eta\right):=A_d\left(\eta-d/2+1\right)=\frac{\mathcal{I}_{d,1}+\left(\eta^2-\left(\frac{d}{2}-1\right)^2\right)\mathcal{I}_{d,2}}{\mathcal{I}_{d,3}},
\] 
where
\[
\mathcal{I}_{d,1} := \int_0^1 |\rho'(x)|^2 x^{d-1}\,\dr x,\qquad \mathcal{I}_{d,2} :=  \int_0^1|\rho(x)|^2x^{d-3}\,\dr x, \qquad \mathcal{I}_{d,3} :=  \int_0^1|\rho(x)|^2x^{d-1}\, \dr x.
\]
Assume for now that $\rho$ does not depend upon $m$. Then $A_d(m)$ is monotone increasing in $m$. Denote, for brevity,
\begin{equation}\label{eq:ab} 
\mathfrak{a}_d:=\frac{\mathcal{I}_{d,3}}{\mathcal{I}_{d,2}}, \qquad \mathfrak{b}_d:=\frac{\mathcal{I}_{d,1}}{\mathcal{I}_{d,2}}.
\end{equation}

Let, for $\lambda>0$, $\mathfrak{X}=\mathfrak{X}_d(\lambda)\in\mathbb{R}$ be the positive solution  (or zero if there are no positive solutions) of the equation
\[
A_d\left(\mathfrak{X}\right) = \lambda^2.
\]
This can be solved explicitly: if $\mathfrak{Y}=\mathfrak{Y}(\lambda)\ge\frac{d}{2}-1$ solves $\mathcal{A}_d\left(\mathfrak{Y}\right)=\lambda^2$, then
\begin{equation}\label{eq:Y2defn}
\mathfrak{Y}^2 := \frac{\lambda^2  \mathcal{I}_{d,3} - \mathcal{I}_{d,1}}{\mathcal{I}_{d,2}}+\left(\frac{d}{2}-1\right)^2=\lambda^2 \mathfrak{a}_d - \mathfrak{b}_d +\left(\frac{d}{2}-1\right)^2,
\end{equation}
and
\[
\mathfrak{X} = 
\begin{cases}
\mathfrak{Y}-\left(\frac{d}{2}-1\right)=\sqrt{\frac{\lambda^2  \mathcal{I}_{d,3} - \mathcal{I}_{d,1}}{\mathcal{I}_{d,2}}+\left(\frac{d}{2}-1\right)^2}-\left(\frac{d}{2}-1\right)\qquad&\text{if }\lambda^2\ge\frac{\mathcal{I}_{d,1}}{\mathcal{I}_{d,3}}=\frac{\mathfrak{b}_d}{\mathfrak{a}_d},\\
0\qquad&\text{otherwise}.
\end{cases}
\]
Clearly, $\mathfrak{X}$ is an increasing function of $\lambda$, and is therefore invertible as long as $\mathfrak{X}>0$, in which case we have
\begin{equation}\label{eq:lambdaab}
\lambda^2=\frac{1}{\mathfrak{a}_d}\left(\mathfrak{X}\left(\mathfrak{X}+d-2\right)+\mathfrak{b}_d\right).
\end{equation}
 
Let
\[
\mathfrak{M}:=\entire{\mathfrak{X}}\ge \mathfrak{X}-1.
\]
Then, using \eqref{eq:kappadmsum}, \eqref{eq:factwratios}, and \eqref{eq:Rdbounds},
\begin{equation}\label{eq:N1st}
\begin{split}
\frac{1}{w_d}\mathcal{N}^\Neu_{d, 1\mathrm{st}}(\lambda) &\ge \frac{1}{w_d}\sum_{m=0}^{\mathfrak{M}} \kappa_{d,m} =\sqrt{2\pi}\mathcal{R}_d d^{3/2} \left(\mathfrak{M}+\frac{d-1}{2}\right)\Pi_{d-2}(\mathfrak{M})\\
&\ge \sqrt{2\pi}\left(1+\frac{1}{4d}-\frac{1}{32d^2}\right)d^{3/2} \left(\mathfrak{X}+\frac{d-3}{2}\right)\times \mathfrak{X}\times (\mathfrak{X}+1)\times \dots\times  (\mathfrak{X}+d-3)=:\utilde{\mathcal{N}}_{d, 1\mathrm{st}}(\lambda).
\end{split}
\end{equation}
For brevity, set
\[
\mathfrak c_d
:=
\sqrt{2\pi}
\left(1+\frac1{4d}-\frac1{32d^2}\right),
\]
\begin{equation}\label{eq:Fd}
\mathfrak{F}_d(\lambda):=\frac{\utilde{\mathcal{N}}_{d, 1\mathrm{st}}(\lambda)}{\lambda^d}=
\frac{\mathfrak{c}_d d^{3/2} \left(\mathfrak{X}(\lambda)+\frac{d-3}{2}\right)\times \mathfrak{X}(\lambda)\times \dots\times  (\mathfrak{X}(\lambda)+d-3)}{\lambda^d}.
\end{equation}

We therefore arrive at
\begin{prop}\label{prop:varsummary} 
Suppose that for a given $\tau>0$ and a given test function $\rho\in H^1\left((0,1), x^{d-1}\,\dr x\right)$ independent of $m$, we have
\begin{equation}\label{eq:N1staim}
\mathfrak{F}_d\left(\tau^3 d^{3/2}\right)-1 > 0.
\end{equation}
Then, with $\lambda=\tau^3 d^{3/2}$,
\[
\mathcal{N}^\Neu_{\mathbb B^d}(\lambda) > w_d \lambda^d.
\] 
\end{prop}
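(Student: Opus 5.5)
This is essentially a chain of inequalities already assembled in the preceding discussion; the plan is to make each link explicit. Fix $\tau>0$, set $\lambda=\tau^3d^{3/2}$, and fix a test function $\rho$ independent of $m$.

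\emph{Step 1: the first zeros are bounded by the variational quotient.} By \eqref{eq:varpr}, $(p'_{d,m,1})^2\le A_d(m)$ for every $m\ge 1$. Since $\mathcal{I}_{d,2}>0$ and $m(m+d-2)$ increases in $m$, the quotient $A_d(m)$ is increasing in $m$. For every integer $m$ with $1\le m\le\mathfrak{X}$ we therefore get $A_d(m)\le A_d(\mathfrak{X})=\lambda^2$, hence $p'_{d,m,1}\le\lambda$. The index $m=0$ is always counted, because $p'_{d,0,1}=0$. So every $m\in\{0,\dots,\mathfrak{M}\}$ contributes to \eqref{eq:firstcount}, which gives
\[
\mathcal{N}^\Neu_{\mathbb B^d}(\lambda)\ge\mathcal{N}^\Neu_{d,1\mathrm{st}}(\lambda)\ge\sum_{m=0}^{\mathfrak{M}}\kappa_{d,m}.
\]
If $\mathfrak{X}=0$, then $\mathfrak{F}_d(\lambda)=0$ because the factor $\mathfrak{X}$ appears in the product. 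Hypothesis \eqref{eq:N1staim} therefore forces $\mathfrak{X}>0$, and we may work in that regime.

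\emph{Step 2: lower-bound the sum of multiplicities.} Use \eqref{eq:kappadmsum} with $l=\mathfrak{M}$ to write the sum in closed form as $\frac{2}{(d-1)!}\bigl(\mathfrak{M}+\frac{d-1}{2}\bigr)\Pi_{d-2}(\mathfrak{M})$. Divide by $w_d$ using \eqref{eq:factwratios}; this produces the factor $\sqrt{2\pi}\mathcal{R}_d d^{3/2}$. Then bound $\mathcal{R}_d$ from below by $1+\frac1{4d}-\frac1{32d^2}$ via \eqref{eq:Rdbounds}.

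Next, replace $\mathfrak{M}$ by $\mathfrak{X}-1$, using $\mathfrak{M}\ge\mathfrak{X}-1$. The expression $(x+\frac{d-1}{2})\Pi_{d-2}(x)$ is increasing for $x\ge-1$ on the relevant range, since all its factors are nonnegative and increasing there. Evaluating at $x=\mathfrak{X}-1$ gives $(\mathfrak{X}+\frac{d-3}{2})\,\mathfrak{X}(\mathfrak{X}+1)\cdots(\mathfrak{X}+d-3)$, which is the right-hand side of \eqref{eq:N1st}. This identification is the main point to check: one must confirm that $\Pi_{d-2}(\mathfrak{X}-1)$ is exactly the product $\mathfrak{X}(\mathfrak{X}+1)\cdots(\mathfrak{X}+d-3)$. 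Monotonicity itself is straightforward once all factors are shown to be nonnegative for $\mathfrak{X}\ge 0$.

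\emph{Step 3: conclude.} We now have $\mathcal{N}^\Neu_{\mathbb B^d}(\lambda)\ge w_d\,\utilde{\mathcal N}_{d,1\mathrm{st}}(\lambda)=w_d\lambda^d\mathfrak{F}_d(\lambda)$. Hypothesis \eqref{eq:N1staim} says $\mathfrak{F}_d(\lambda)>1$, so
\[
\mathcal{N}^\Neu_{\mathbb B^d}(\lambda)>w_d\lambda^d,
\]
which is the claim.
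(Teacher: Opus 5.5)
Your proposal is correct and follows the paper's own proof. The paper proves this by the one-line chain $\mathcal{N}^\Neu_{\mathbb B^d}(\lambda)\ge\mathcal{N}^\Neu_{d,1\mathrm{st}}(\lambda)\ge w_d\,\utilde{\mathcal{N}}_{d,1\mathrm{st}}(\lambda)=w_d\lambda^d\mathfrak{F}_d(\lambda)>w_d\lambda^d$, built from \eqref{eq:firstcount}, \eqref{eq:N1st} and \eqref{eq:N1staim}. You spell out what the paper leaves implicit: that monotonicity of $A_d(m)$ gives $p'_{d,m,1}\le\lambda$ for $m\le\mathfrak{M}$, the identity $\Pi_{d-2}(\mathfrak{X}-1)=\mathfrak{X}(\mathfrak{X}+1)\cdots(\mathfrak{X}+d-3)$, and the degenerate case $\mathfrak{X}=0$.
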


\begin{proof}
We have, by \eqref{eq:firstcount}, \eqref{eq:N1st}, and \eqref{eq:N1staim},
\[
\mathcal{N}^\Neu_{\mathbb B^d}(\lambda) \ge \mathcal{N}^\Neu_{d, 1\mathrm{st}}(\lambda)\ge w_d\utilde{\mathcal{N}}_{d, 1\mathrm{st}}(\lambda)\ge w_d \lambda^d.
\]
\end{proof}

We aim, for a given $d\ge 3$ and a test function $\rho$, to find a range of values of the  parameter $\tau$ such that \eqref{eq:N1staim} holds.
Before proceeding to considering specific test functions $\rho$, we make additional observations. 
Using \eqref{eq:lambdaab}, we can re-write \eqref{eq:Fd} as 
$\mathfrak{F}_d(\lambda) = \mathfrak{Z}_d(\mathfrak{X}(\lambda))$, where
\[
 \mathfrak{Z}_d(\mathfrak{X}) :=  \mathfrak{c}_d d^{3/2} \mathfrak{a}_d^{\frac{d}{2}}\frac{\left(\mathfrak{X}+\frac{d-3}{2}\right) \Pi_{d-2}\left(\mathfrak{X}-1\right)}{\left(\mathfrak{X}\left(\mathfrak{X}+d-2\right)+\mathfrak{b}_d\right)^{d/2}}.
\]
Applying Lemma \ref{lem:Pi3} to the right-hand side of \eqref{eq:N1st},
we deduce that with $\lambda=\tau^3 d^{3/2}$ we have
\begin{equation}\label{eq:N1stbound}
\mathfrak{F}_d\left(\tau^3 d^{3/2}\right) \ge \tau^{-3} \mathfrak{c}_d \left(\frac{\mathfrak{Y}(\lambda)}{\lambda}\right)^{d-1} \left(1-\frac{d-1}{2\mathfrak{Y}(\lambda)}-\frac{(d-1)(d-2)(d-6)}{24\left(\mathfrak{Y}(\lambda)\right)^2}\right).
\end{equation}

\subsection{A constant test function}

We start with $\rho(x)\equiv 1$. Then 
\begin{equation}
\mathcal{I}_{d,1} = 0, \qquad\mathcal{I}_{d,2}=\frac{1}{d-2}, \qquad\mathcal{I}_{d,3}=\frac{1}{d},\qquad \mathfrak{a}_d=\frac{d-2}{d},\qquad \mathfrak{b}_d=0,
\end{equation}
and
\begin{equation}\label{eq:Xlambda1}
\mathfrak{X}(\lambda)=\sqrt{\frac{(d-2) \lambda ^2}{d}+\left(\frac{d}{2}-1\right)^2}-\left(\frac{d}{2}-1\right).
\end{equation}
In Figure \ref{fig:varone},  the left-hand side of \eqref{eq:N1staim} 
is plotted as a function of $\tau$.

\begin{figure}[htb]
\centering
\includegraphics{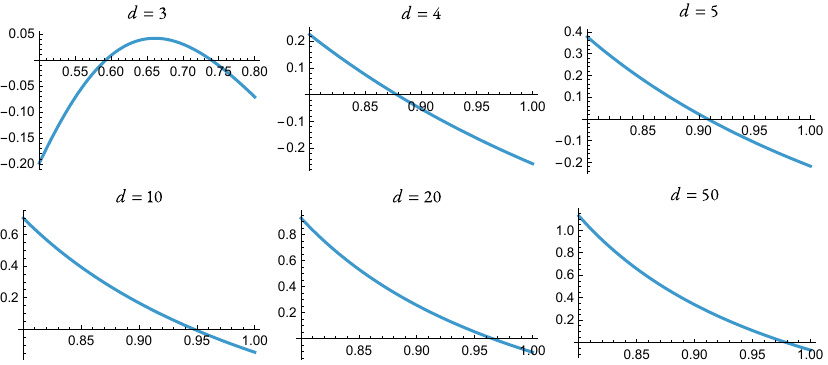}
\caption{Plots of the numerically-evaluated left-hand side of \eqref{eq:N1staim} as a function of $\tau$ for the constant test function. We are looking at the intervals of positivity. \label{fig:varone}}
\end{figure}

\begin{lemma}\label{lem:constantmonotone}
Let $\rho(x)\equiv 1$ and $d\ge4$. Then the function $\mathfrak{F}_d(\lambda)$ is strictly decreasing in the variable $\lambda\in(0,\infty)$. 
\end{lemma}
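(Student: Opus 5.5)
The approach is to use the factorisation $\mathfrak{F}_d(\lambda)=\mathfrak{Z}_d(\mathfrak{X}(\lambda))$ noted above and treat the two factors separately. First I show that the inner map $\lambda\mapsto\mathfrak{X}(\lambda)$ is strictly increasing from $(0,\infty)$ onto $(0,\infty)$. Then I show that $\mathfrak{Z}_d$ is strictly decreasing on $(0,\infty)$; the lemma follows by composition.

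\emph{The inner map.} For $\rho\equiv1$ we have $\mathfrak{b}_d=0$, so \eqref{eq:Xlambda1} holds. This gives $\mathfrak{X}(\lambda)>0$ for every $\lambda>0$, and $\mathfrak{X}$ is visibly strictly increasing in $\lambda$.

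\emph{Setting up $\mathfrak{Z}_d$.} Since $\mathfrak{c}_d d^{3/2}\mathfrak{a}_d^{d/2}>0$, it suffices to show that
\[
\Phi(X):=\log\left(X+\tfrac{d-3}{2}\right)+\sum_{j=0}^{d-3}\log(X+j)-\tfrac d2\log X-\tfrac d2\log(X+d-2)
\]
has negative derivative for $X>0$. Here I use $\Pi_{d-2}(X-1)=X(X+1)\cdots(X+d-3)$, which is a genuine product of $d-2\ge2$ factors because $d\ge4$. I complete the sum to $\sum_{j=0}^{d-2}$ and subtract the extra term $1/(X+d-2)$. This gives
\[
\Phi'(X)=\frac{1}{X+\frac{d-3}{2}}+\sum_{j=0}^{d-2}\frac{1}{X+j}-\frac{1}{X+d-2}-\frac d2\left(\frac1X+\frac1{X+d-2}\right).
\]

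\emph{Bounding the symmetric sum.} The $d-1$ nodes $X+j$, $0\le j\le d-2$, are symmetric about $c:=X+\frac{d-2}{2}$. The function $t\mapsto\frac1{c-t}+\frac1{c+t}$ is strictly increasing in $|t|<c$. Averaging the pairs $j$ and $d-2-j$ therefore gives
\[
\sum_{j=0}^{d-2}\frac1{X+j}\le\frac{d-1}{2}\left(\frac1X+\frac1{X+d-2}\right),
\]
and the inequality is strict, since for $d\ge4$ there is at least one inner node ($j=1$). It follows that
\[
\Phi'(X)<\frac{1}{X+\frac{d-3}{2}}-\frac{1}{X+d-2}-\frac12\left(\frac1X+\frac1{X+d-2}\right),
\]
so it remains to show that the right-hand side is nonpositive.

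\emph{The remaining inequality.} Combining the last two terms, I need
\[
\frac1{X+\frac{d-3}{2}}\le\frac{2X+\frac{d-2}{2}}{X(X+d-2)}.
\]
After cross-multiplying, this becomes
\[
X^2+(d-2)X\le 2X^2+\Bigl((d-3)+\tfrac{d-2}{2}\Bigr)X+\tfrac{(d-2)(d-3)}{4}.
\]
This holds for $X>0$ because $(d-3)+\frac{d-2}{2}\ge d-2$ whenever $d\ge4$. Hence $\Phi'<0$ on $(0,\infty)$, so $\mathfrak{Z}_d$ is strictly decreasing there. Composing with the strictly increasing $\mathfrak{X}(\lambda)$ shows that $\mathfrak{F}_d$ is strictly decreasing in $\lambda$.

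The main obstacle is the extra factor $X+\frac{d-3}{2}$. Taken alone, its logarithmic derivative exceeds the harmonic-mean term $1/(X+\frac{d-2}{2})$, so a crude comparison fails. The fix above is to keep the additional negative term $-1/(X+d-2)$ left over from completing the symmetric sum, and to use it to absorb that excess.
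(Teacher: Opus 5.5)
Your proof is correct and is essentially the paper's argument. The same pairing of symmetric reciprocals reduces $\partial_{\mathfrak{X}}\log\mathfrak{Z}_d$ to at most $\frac{1}{\mathfrak{X}+\frac{d-3}{2}}-\frac{1}{2\mathfrak{X}}-\frac{3}{2(\mathfrak{X}+d-2)}$, and your cross-multiplied inequality is exactly the positivity of $4\mathfrak{X}^2+2(d-4)\mathfrak{X}+(d-2)(d-3)$ used in the paper. The only cosmetic difference is that you pair over all nodes $j=0,\dots,d-2$ and subtract the extra term $\frac{1}{\mathfrak{X}+d-2}$, while the paper pairs only the inner nodes $k=1,\dots,d-3$.
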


For the proof, see \S\ref{proof:constant}.

We now observe that when $\lambda\to 0^+$, the Taylor expansion of \eqref{eq:Xlambda1} gives $\mathfrak X(\lambda)=\frac{\lambda^2}{d}+O(\lambda^4)$.
Hence, for $d\ge4$,
\[
\mathfrak{F}_d(\lambda)
\sim
\mathfrak{c}_d d^{1/2}
\left(\frac{d-3}{2}\right)(d-3)!\,\lambda^{2-d}\to+\infty
\qquad\text{as }\lambda\to0^+.
\]
On the other hand, as $\lambda\to+\infty$, $\mathfrak X(\lambda) = \sqrt{\frac{d-2}{d}}\,\lambda+O(1)$.
Thus,
\[
\mathfrak{F}_d(\lambda)
\sim
\mathfrak{c}_d d^{3/2}\left(\frac{d-2}{d}\right)^{\frac{d-1}{2}} \lambda^{-1}
\to 0
\qquad\text{as }\lambda\to+\infty.
\]
Combining these limiting behaviours and Lemma \ref{lem:constantmonotone}, we obtain

\begin{prop}\label{prop:constant1}
Let $d\ge 4$. Then there exists a unique $\tau^\circ_d>0$ such that $\mathfrak{F}_d\left({\tau_d^\circ}^3 d^{3/2}\right)=1$, and  $\mathfrak{F}_d\left(\tau^3 d^{3/2}\right)>1$ for all $\tau\in\left(0,  \tau_d^\circ\right)$.
\end{prop}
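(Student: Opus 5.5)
The plan is to treat $\phi(\tau):=\mathfrak{F}_d\left(\tau^3 d^{3/2}\right)$ as a function of $\tau\in(0,\infty)$ and to combine three facts: it is continuous, it is strictly decreasing, and its limits at the endpoints are $+\infty$ and $0$. Once these are in hand, the intermediate value theorem gives a unique $\tau_d^\circ$ with $\phi(\tau_d^\circ)=1$, and strict monotonicity gives $\phi(\tau)>1$ for $\tau<\tau_d^\circ$.

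\emph{Continuity.} For $\rho\equiv 1$ we have $\mathfrak{b}_d=0$. The threshold condition $\lambda^2\ge \mathfrak{b}_d/\mathfrak{a}_d$ therefore holds for every $\lambda>0$, so $\mathfrak{X}(\lambda)$ is given by the explicit formula \eqref{eq:Xlambda1}. This is continuous and strictly positive for $\lambda>0$. Hence the numerator in \eqref{eq:Fd}, a polynomial in $\mathfrak{X}$, is continuous and positive, and $\mathfrak{F}_d(\lambda)$ is continuous on $(0,\infty)$. Since $\tau\mapsto\tau^3d^{3/2}$ is a continuous increasing bijection of $(0,\infty)$ onto itself, $\phi$ is continuous as well.

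\emph{Monotonicity.} This comes directly from Lemma \ref{lem:constantmonotone}: for $d\ge4$, $\mathfrak{F}_d$ is strictly decreasing in $\lambda$, and composing with the increasing map $\tau\mapsto\tau^3 d^{3/2}$ shows $\phi$ is strictly decreasing.

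\emph{Limits.} We use the asymptotics computed just before the statement.
\begin{itemize}
\item As $\lambda\to0^+$, $\mathfrak{X}(\lambda)=\lambda^2/d+O(\lambda^4)$. The only factor in the numerator of \eqref{eq:Fd} that vanishes is $\mathfrak{X}$ itself; the factors $\mathfrak{X}+\frac{d-3}{2}$ and $\mathfrak{X}+1,\dots,\mathfrak{X}+d-3$ tend to positive constants because $d\ge4$. So the numerator behaves like a constant times $\lambda^2$, and $\mathfrak{F}_d(\lambda)\asymp\lambda^{2-d}\to+\infty$ since $d\ge4$.
\item As $\lambda\to\infty$, $\mathfrak{X}$ grows linearly in $\lambda$. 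The numerator is a polynomial of degree $d-1$ in $\mathfrak{X}$, so $\mathfrak{F}_d(\lambda)\asymp\lambda^{-1}\to0$.
\end{itemize}

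With continuity, strict decrease, $\phi(0^+)=+\infty$ and $\phi(\infty)=0$, the intermediate value theorem yields $\tau_d^\circ$ with $\phi(\tau_d^\circ)=1$. Uniqueness and $\phi(\tau)>1$ on $(0,\tau_d^\circ)$ follow from strict monotonicity.

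The only substantive step is the monotonicity in Lemma \ref{lem:constantmonotone}, which we may assume. Beyond that, the one point to handle carefully is the small-$\lambda$ limit, which is exactly where $d\ge4$ is needed: for $d=3$ the factor $\mathfrak{X}+\frac{d-3}{2}$ also vanishes, so the numerator is of order $\lambda^4$ and $\mathfrak{F}_3(\lambda)\to0$ rather than blowing up.
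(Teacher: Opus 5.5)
Your proof is correct and follows the paper's argument. The paper likewise combines the strict monotonicity from Lemma \ref{lem:constantmonotone} with the limits $\mathfrak{F}_d(\lambda)\to+\infty$ as $\lambda\to0^+$ and $\mathfrak{F}_d(\lambda)\to0$ as $\lambda\to+\infty$; you additionally make the continuity and intermediate-value step explicit, and your remark that $d\ge4$ is exactly what is needed for the blow-up at small $\lambda$ is accurate.
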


A practical lower bound for $\tau^\circ_d$ is given by
\begin{prop}\label{prop:constant2}
For any $d\ge 4$, 
\[
\tau^\circ_d > \frac{17}{20}=:\tau^\circ.
\]
\end{prop}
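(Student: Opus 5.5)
By Proposition \ref{prop:constant1} and Lemma \ref{lem:constantmonotone}, $\mathfrak{F}_d(\tau^3d^{3/2})$ is strictly decreasing in $\tau$ and crosses the level $1$ exactly once, at $\tau=\tau_d^\circ$. So it suffices to prove
\[
\mathfrak{F}_d\left(\lambda_\circ\right)>1,\qquad \lambda_\circ:=\left(\tfrac{17}{20}\right)^3 d^{3/2},
\]
for every $d\ge4$. I would use the lower bound \eqref{eq:N1stbound} with $\tau=\tau^\circ$. For the constant test function, \eqref{eq:Y2defn} gives
\[
\mathfrak{Y}^2=\tfrac{d-2}{d}\lambda^2+\left(\tfrac d2-1\right)^2 .
\]
The task then reduces to showing
\begin{equation*}
\Phi_d:=\left(\tfrac{20}{17}\right)^3\mathfrak{c}_d\left(\frac{\mathfrak{Y}}{\lambda_\circ}\right)^{d-1}\left(1-\frac{d-1}{2\mathfrak{Y}}-\frac{(d-1)(d-2)(d-6)}{24\mathfrak{Y}^2}\right)>1 .
\end{equation*}

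\textbf{Large $d$.} Here I would set $\mathfrak{Y}/\lambda_\circ=\sqrt{1-2/d+\epsilon_d}$, where $\epsilon_d=(d/2-1)^2/\lambda_\circ^2=O(d^{-1})$. Then
\[
\left(\mathfrak{Y}/\lambda_\circ\right)^{d-1}\to \er^{-1+\frac{1}{8}(20/17)^6}.
\]
The bracket factor tends to $1-\frac{1}{2}(20/17)^3\cdot\frac{1}{\sqrt d}\cdot(\dots)$ plus a term from $(d-1)(d-2)(d-6)/(24\mathfrak{Y}^2)$. That term is of order $d^3/d^3$, a constant: since $\mathfrak{Y}\approx\lambda_\circ\approx 0.614\,d^{3/2}$, it tends to $\frac{1}{24}(20/17)^6$, up to lower-order corrections. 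The limiting value is then
\[
(20/17)^3\sqrt{2\pi}\,\er^{-1+0.0658\ldots}\left(1-0.0218\ldots\right)\approx 1.628\cdot2.507\cdot0.393\cdot0.978\approx1.57>1.
\]
This leaves a comfortable margin. I would turn it into a rigorous statement for all $d\ge d_0$ by replacing each factor with an explicit monotone bound. For the power I would use Lemma \ref{lem:log}, i.e.\ $\log(1-x)\ge -x-\frac{x^2}{2(1-x)}$, together with the trivial $\mathfrak{Y}\ge\lambda_\circ\sqrt{1-2/d}$. The term $\frac{d-1}{2\mathfrak{Y}}\le \frac{C}{\sqrt d}$ and the cubic term would get explicit rational upper bounds. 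The resulting lower bound is monotone in $d$, and I would aim for $d_0$ of the order of a few dozen.

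\textbf{Small $d$, $4\le d<d_0$.} For the finitely many remaining dimensions I would evaluate $\mathfrak{F}_d(\lambda_\circ)$ directly from \eqref{eq:Fd}, using exact arithmetic with certified enclosures of the square root. Using the exact product instead of \eqref{eq:N1stbound} matters for small $d$, where $\mathfrak{X}$ is small and the Lemma \ref{lem:Pi3} bound loses too much.

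The main obstacle is this small-$d$ regime, especially $d=4,5,6$, where $\lambda_\circ$ is moderate and the margin above $1$ may be thin. Figure~\ref{fig:varone} suggests the crossing point $\tau_d^\circ$ is smallest there. If a certified evaluation fails for some tiny $d$, I would check whether the exact $\mathcal{N}^\Neu_{d,1\mathrm{st}}$ (with $\entire{\mathfrak{X}}$ instead of $\mathfrak{X}-1$) suffices. I expect the plain evaluation to work, since the threshold $17/20$ was presumably chosen with this in mind.
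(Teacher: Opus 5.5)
Your proposal is correct and follows essentially the paper's own proof. You reduce via monotonicity to $\mathfrak{F}_d\bigl({\tau^\circ}^3d^{3/2}\bigr)>1$, bound the right-hand side of \eqref{eq:N1stbound} term by term by factors increasing in $d$ for large $d$, and evaluate \eqref{eq:Fd} directly for the remaining small $d$.

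The only real difference is that the paper keeps the factor $\bigl(1+\frac{d-2}{4{\tau^\circ}^6d^2}\bigr)^{(d-1)/2}\ge 1+\frac{(d-1)(d-2)}{8d^2{\tau^\circ}^6}$ instead of discarding it via $\mathfrak{Y}\ge\lambda_\circ\sqrt{1-2/d}$; this brings the threshold down to $d=7$, so only $d=4,5,6$ need direct evaluation. Also, your heuristic constants are miscomputed: $\frac18(20/17)^6\approx0.331$ and $\frac1{24}(20/17)^6\approx0.110$, so the limit is about $1.86$, which does not affect the argument.
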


\begin{proof} It is enough, by monotonicity, to show that $\mathfrak{F}_d\left({\tau^\circ}^3 d^{3/2}\right)>1$. We set
\[
\lambda^\circ : = {\tau^\circ}^3 d^{3/2}, \qquad \mathfrak{Y}^\circ =  \mathfrak{Y}\left(\lambda^\circ\right) = \sqrt{\frac{(d-2) {\lambda^\circ}^2}{d}+\left(\frac{d}{2}-1\right)^2},
\]
substitute these values into \eqref{eq:N1stbound}, and estimate the result term-by-term.

Firstly, we have 
\begin{equation}\label{eq:cflower}
\mathfrak{c}_d\ge \sqrt{2\pi}\ge \frac{193}{77}.
\end{equation}

Secondly,
\[
\left(\frac{\mathfrak{Y}^\circ}{\lambda^\circ}\right)^{d-1} = \left(1-\frac{2}{d}\right)^{\frac{d-1}{2}} \left(1+\frac{d-2}{4{\tau^\circ}^6 d^2}\right)^{\frac{d-1}{2}},
\]
and we estimate the two factors separately. For the first one, by Lemma \ref{lem:log} and \eqref{eq:loglower}, 
\[
\begin{split}
\left(1-\frac{2}{d}\right)^{\frac{d-1}{2}} &= \exp\left(\frac{d-1}{2} \log\left(1-\frac{2}{d}\right)\right) \ge \exp\left(-\frac{(d-1)^2}{d (d-2)}\right)\\\
&=\exp\left(-1-\frac{1}{d (d-2)}\right)\ge \exp\left(-1+\log\left(1-\frac{1}{d (d-2)}\right)\right)=\er^{-1} \left(1-\frac{1}{d(d-2)}\right),
\end{split}
\]
and for the second one, as $4{\tau^\circ}^6>1$, 
\[
\left(1+\frac{d-2}{4{\tau^\circ}^6 d^2}\right)^{\frac{d-1}{2}} \ge 1+\frac{(d-1) (d-2)}{8 d^2 {\tau^\circ}^6}.
\]

Finally, as $\mathfrak{Y}^\circ \ge {\tau^\circ}^3 d \sqrt{d-2}$,
\begin{equation}\label{eq:factor3lower}
-\frac{d-1}{2\mathfrak{Y}^\circ}\ge -\frac{d-1}{ 2d \sqrt{d-2}{\tau^\circ}^3},\qquad -\frac{(d-1)(d-2)(d-6)}{24{\mathfrak{Y}^\circ}^2}\ge -\frac{(d-1)(d-6)_+}{24 {\tau^\circ}^6 d^2} \ge -\frac{1}{24 {\tau^\circ}^6},
\end{equation}
the latter bound being trivial for $d\le 6$.

Substituting bounds \eqref{eq:cflower}--\eqref{eq:factor3lower}  into \eqref{eq:N1stbound}, we obtain
\[
\mathfrak{F}_d\left({\tau^\circ}^3 d^{3/2}\right) \ge 
\frac{193\er^{-1}}{77{\tau^\circ}^3}\left(1-\frac{1}{d(d-2)}\right) \left(1+\frac{(d-1) (d-2)}{8 d^2 {\tau^\circ}^6}\right) \left(1-\frac{d-1}{2 d \sqrt{d-2} {\tau^\circ}^3}-\frac{1}{24 {\tau^\circ}^6}\right),
\]
where each factor in the right-hand side is monotone increasing in $d$. Evaluating at $d=7$ gives
\[
\mathfrak{F}_7\left({\tau^\circ}^3 7^{3/2}\right) \ge \frac{878684768105600 \left(450888949-70747200 \sqrt{5}\right)}{93387562322467096652199\er}\approx 1.01312>1,
\]
thus proving the statement for all $d\ge 7$. Explicitly evaluating, using \eqref{eq:Fd},
\[
\mathfrak{F}_4\left({\tau^\circ}^3 4^{3/2}\right)\approx 1.07496,\quad  \mathfrak{F}_5\left({\tau^\circ}^3 5^{3/2}\right)\approx 1.1803, \quad \mathfrak{F}_6\left({\tau^\circ}^3 6^{3/2}\right)\approx 1.24828
\]
completes the proof.
\end{proof}

\subsection{An alternative test function}

We continue with another test function
\begin{equation}\label{eq:rhoNF}
\rho(x):=x^{\frac{1-d}{2}}\left(x-1+\frac{2}{d}\right)_+^{3/2}.
\end{equation}
Then, by explicit calculation,
\begin{equation}\label{eq:INF}
\begin{aligned}
\mathcal{I}_{d,1} &=\frac{-6 d^4+42 d^3-98 d^2+88 d-8+3 d (d-2)^2 \left(d^2-4 d+3\right) \ell_d}{4 d^3},\\
\mathcal{I}_{d,2} &=\frac{-6 d^2+18 d -8 +3 d (d-2)^2 \ell_d}{d^3},\\
\mathcal{I}_{d,3} &=\frac{4}{d^4},
\end{aligned}
\end{equation}
where we have denoted
\begin{equation}\label{eq:epsL}
\epsilon:=\frac{2}{d}\in(0,1),\qquad \ell_d:=\log\frac{d}{d-2}=-\log(1-\epsilon).
\end{equation}
Note that in this case $\mathfrak{b}_d>0$.

The experiments,  for this test function, are plotted in Figure \ref{fig:varNFMy}.

\begin{figure}[htb]
\centering
\includegraphics{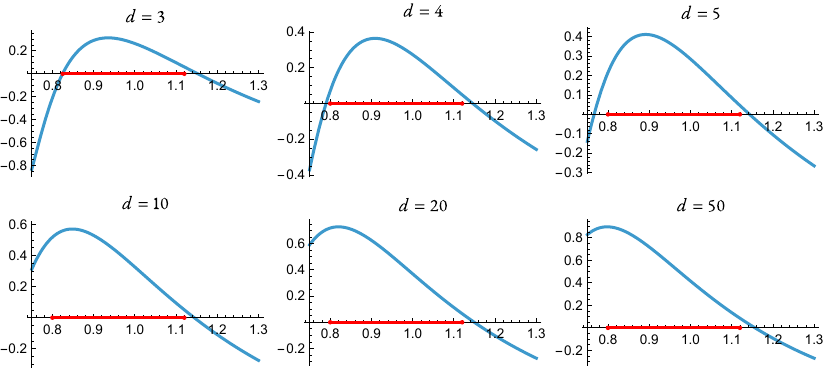}
\caption{Numerically evaluated left-hand side of \eqref{eq:N1staim} for the test function \eqref{eq:rhoNF}, shown as a function of $\tau$. The red lines show the positivity intervals guaranteed by Proposition \ref{prop:othertest}.  \label{fig:varNFMy}}
\end{figure}

\begin{lemma}\label{lem:piecewise-unimodal}
Let $d\ge 3$ and $\rho$ be given by \eqref{eq:rhoNF}. Then the function $\mathfrak{F}_d(\lambda)$ is unimodal in the variable $\lambda\in\left[\sqrt{\frac{\mathfrak{b}_d}{\mathfrak{a}_d}},\infty\right)$: that is,  it 
has exactly one critical point in that interval,  is strictly increasing to the
left of this critical point and strictly decreasing to the right. 
\end{lemma}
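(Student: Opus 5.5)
Since $\mathfrak{X}(\lambda)$ is a smooth strictly increasing bijection from $\bigl[\sqrt{\mathfrak{b}_d/\mathfrak{a}_d},\infty\bigr)$ onto $[0,\infty)$, and $\mathfrak{F}_d(\lambda)=\mathfrak{Z}_d(\mathfrak{X}(\lambda))$, I will prove unimodality of $\mathfrak{Z}_d$ on $\mathfrak{X}\in[0,\infty)$ and then transfer it to $\lambda$. The plan is to work with the logarithmic derivative. Writing $\Pi_{d-2}(\mathfrak{X}-1)=\prod_{j=0}^{d-3}(\mathfrak{X}+j)$, we have
\[
\frac{\mathfrak{Z}_d'(\mathfrak{X})}{\mathfrak{Z}_d(\mathfrak{X})}=\frac{1}{\mathfrak{X}+\frac{d-3}{2}}+\sum_{j=0}^{d-3}\frac{1}{\mathfrak{X}+j}-\frac{d}{2}\cdot\frac{2\mathfrak{X}+d-2}{\mathfrak{X}(\mathfrak{X}+d-2)+\mathfrak{b}_d}=:h_d(\mathfrak{X}).
\]
At the endpoint $\mathfrak{X}=0$, the factor $\mathfrak{X}$ in $\Pi$ makes $\mathfrak{Z}_d(0)=0$, with $h_d\to+\infty$ as $\mathfrak{X}\to0^+$; this holds because $\mathfrak{b}_d>0$, so the last term stays bounded. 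For large $\mathfrak{X}$, the sum behaves like $(d-1)/\mathfrak{X}$ while the last term behaves like $d/\mathfrak{X}$, so $h_d(\mathfrak{X})\sim-1/\mathfrak{X}<0$. Hence $\mathfrak{Z}_d$ has at least one critical point, and it remains to show that $h_d$ changes sign exactly once.

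To count the sign changes, I multiply $h_d$ by the positive denominator $D(\mathfrak{X}):=\mathfrak{X}(\mathfrak{X}+d-2)+\mathfrak{b}_d$ and study
\[
k_d(\mathfrak{X}):=D(\mathfrak{X})\left(\frac{1}{\mathfrak{X}+\frac{d-3}{2}}+\sum_{j=0}^{d-3}\frac{1}{\mathfrak{X}+j}\right)-\frac{d}{2}(2\mathfrak{X}+d-2).
\]
The key step is to show that $k_d$ is strictly decreasing on $(0,\infty)$. Set $\phi(\mathfrak{X}):=\sum_i 1/(\mathfrak{X}+a_i)$ with $a_i\in\{0,\dots,d-3,\frac{d-3}{2}\}$. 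Then
\[
k_d'=D'\phi+D\phi'-d=\sum_i\frac{(2\mathfrak{X}+d-2)(\mathfrak{X}+a_i)-\mathfrak{X}(\mathfrak{X}+d-2)-\mathfrak{b}_d}{(\mathfrak{X}+a_i)^2}-d,
\]
and each summand simplifies to $1+\frac{a_i(d-2-a_i)-\mathfrak{b}_d}{(\mathfrak{X}+a_i)^2}$. Since there are $d-1$ values $a_i$, this gives
\[
k_d'(\mathfrak{X})=-1+\sum_i\frac{a_i(d-2-a_i)-\mathfrak{b}_d}{(\mathfrak{X}+a_i)^2}.
\]
For $a_i=0$ the numerator is $-\mathfrak{b}_d<0$, but for $1\le a_i\le d-3$ it is positive whenever $\mathfrak{b}_d<a_i(d-2-a_i)$. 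So $k_d'<0$ is not automatic, and this is the main obstacle. If monotonicity of $k_d$ fails, I will fall back on a weaker property that still yields a single sign change: for instance, that $k_d'<0$ at every zero of $k_d$, or that $k_d$ is concave on the region where it is positive.

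The concrete route is to use the explicit values \eqref{eq:INF}, together with the bounds on $\ell_d$ from the logarithm estimates of Lemma \ref{lem:log} and \eqref{eq:loglower}, to show that $\mathfrak{b}_d$ is large, of order $d^2$. Heuristically the test function concentrates near $x=1$ on a scale $1/d$, which makes $\mathcal{I}_{d,1}$ large relative to $\mathcal{I}_{d,2}$. Since $\max_{a}a(d-2-a)=(d-2)^2/4$, a bound $\mathfrak{b}_d\ge(d-2)^2/4$ would make every summand numerator nonpositive, giving $k_d'\le-1<0$ at once. I will expand $\mathfrak{b}_d=\mathcal{I}_{d,1}/\mathcal{I}_{d,2}$ in $\epsilon=2/d$, using series bounds for $\ell_d$ with explicit remainders, and verify this inequality for all $d$ beyond some threshold $d_0$. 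The finitely many $d$ with $3\le d<d_0$ I will check directly in exact rational arithmetic with certified enclosures of $\ell_d$, in the same spirit as the other finite checks in the paper.

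Given strict monotonicity of $k_d$ and its limits ($+\infty$ near $0$, $-\infty$ at infinity), $k_d$ has exactly one zero $\mathfrak{X}_*$, and $h_d$ has the same sign as $k_d$. Thus $\mathfrak{Z}_d$ strictly increases on $(0,\mathfrak{X}_*)$ and strictly decreases after $\mathfrak{X}_*$. Composing with the increasing map $\lambda\mapsto\mathfrak{X}(\lambda)$, which has positive derivative for $\lambda>\sqrt{\mathfrak{b}_d/\mathfrak{a}_d}$, gives the claimed unimodality of $\mathfrak{F}_d$ with the unique critical point $\lambda_*=\mathfrak{X}^{-1}(\mathfrak{X}_*)$.
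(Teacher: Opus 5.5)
Your proposal is correct and is essentially the paper's proof. Your $k_d$ is half of the paper's $\mathfrak{J}_d$, and the paper likewise proves $\mathfrak{b}_d\ge\frac{(d-2)^2}{4}$ to get $k_d'\le -1$; your exact identity for $k_d'$ is equivalent to the paper's estimate via $\mathfrak{H}_d(\mathfrak{X})\ge(\mathfrak{X}+\frac d2-1)^2$.

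The key inequality needs no threshold $d_0$, finite computer check or fallback, because it holds uniformly in one line. From \eqref{eq:INF}, $\mathcal{I}_{d,1}-\frac{(d-2)^2}{4}\mathcal{I}_{d,2}=\frac{6d^2-16d+24+3d(d-2)^2\log(1-\frac2d)}{4d^3}$, and \eqref{eq:loglower} bounds this below by $\frac{d+6}{2d^3}>0$ for every $d\ge3$.
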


For the proof, see \S\ref{proof:testNF}.

\begin{prop}\label{prop:othertest}
With the choice of test function as above, for every $d\ge 3$ there exists an interval $\left[\tau_{d,-}^\bullet, \tau_{d,+}^\bullet\right]$ such that $\mathfrak{F}_d\left(\tau^3 d^{3/2}\right)-1>0$ for $\tau$ in this interval. Moreover, 
for all $d\ge 4$,  $\left[\tau_{d,-}^\bullet, \tau_{d,+}^\bullet\right]\supset [\tau_-^\bullet,\tau_+^\bullet] := \left[\frac{4}{5},\frac{28}{25}\right]$, and for $d=3$, $\left[\tau_{3,-}^\bullet, \tau_{3,+}^\bullet\right]\supset \left[\frac{33}{40},\frac{28}{25}\right]$.
\end{prop}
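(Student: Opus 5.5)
The plan is to reduce the statement, via Lemma \ref{lem:piecewise-unimodal}, to checking the inequality at the two endpoints only. Fix $d\ge3$ and the test function \eqref{eq:rhoNF}. The map $\tau\mapsto\lambda=\tau^3d^{3/2}$ is increasing, and $\mathfrak{F}_d$ is unimodal on $[\sqrt{\mathfrak{b}_d/\mathfrak{a}_d},\infty)$. So the set where $\mathfrak{F}_d(\tau^3 d^{3/2})>1$ is an interval, provided it is nonempty and lies in that range; we take $[\tau^\bullet_{d,-},\tau^\bullet_{d,+}]$ to be any closed subinterval of it. The inclusion claims then follow once we show two things: $\mathfrak{F}_d\left(\tau^3 d^{3/2}\right)>1$ at $\tau=\frac45$ and at $\tau=\frac{28}{25}$ for $d\ge4$ (at $\tau=\frac{33}{40}$ and $\frac{28}{25}$ for $d=3$), and $\frac45^3d^{3/2}\ge\sqrt{\mathfrak{b}_d/\mathfrak{a}_d}$. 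A positive value at one endpoint already shows the set is nonempty. For the last condition, \eqref{eq:INF} gives $\mathfrak{b}_d/\mathfrak{a}_d=\mathcal{I}_{d,1}/\mathcal{I}_{d,3}$, which is of order $d^3$ with a small constant. I would check it with the elementary bounds $\epsilon+\epsilon^2/2+\epsilon^3/3\le\ell_d\le\epsilon+\epsilon^2/2+\epsilon^3/3+\epsilon^4/(4(1-\epsilon))$ on $\ell_d=-\log(1-\epsilon)$.

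For large $d$, say $d\ge d_0$, I would argue as in the proof of Proposition \ref{prop:constant2}, starting from \eqref{eq:N1stbound}. First expand $\mathfrak{a}_d$ and $\mathfrak{b}_d$ in $\epsilon=2/d$. Substituting the series for $\ell_d$ into \eqref{eq:INF}, the leading cancellations give $\mathcal{I}_{d,2}=\frac{4}{d^3}\left(1+O(1/d)\right)$, hence $\mathfrak{a}_d=\frac1d(1+O(1/d))$. It also gives $\mathcal{I}_{d,1}=O(d^{-1})$, hence $\mathfrak{b}_d=\beta d^2(1+O(1/d))$ for an explicit constant $\beta$. Consequently $\mathfrak{Y}^2/\lambda^2=\mathfrak{a}_d+(d^2/4-\beta d^2+\dots)/(\tau^6d^3)$. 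With rigorous truncated bounds for $\ell_d$ (one-sided in the needed directions), I would get a lower bound for $(\mathfrak{Y}/\lambda)^{d-1}$ of the form $\exp(-\text{const})\,(\dots)$, using Lemma \ref{lem:log} as before. The other factors $1-\frac{d-1}{2\mathfrak{Y}}-\frac{(d-1)(d-2)(d-6)}{24\mathfrak{Y}^2}$ would be bounded below using $\mathfrak{Y}\gtrsim\tau^3 d$. The result is a product of factors, each monotone increasing in $d$. Evaluating it exactly in rational arithmetic at $d=d_0$, for both $\tau=\frac45$ and $\tau=\frac{28}{25}$, would give values $>1$ and so settle all $d\ge d_0$.

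For the finitely many remaining dimensions $3\le d<d_0$, I would evaluate $\mathfrak{F}_d$ directly from \eqref{eq:Fd} with the exact $\mathfrak{X}(\lambda)$ of \eqref{eq:Y2defn}, using certified interval enclosures of $\ell_d$ at the required endpoints. For $d=3$ the left endpoint $\frac{33}{40}$ is used instead of $\frac45$. Figure \ref{fig:varNFMy} indicates margins of a few percent, so modest precision suffices.

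The main obstacle will be the large-$d$ asymptotic step. The leading terms of $\mathcal{I}_{d,1}$ and $\mathcal{I}_{d,2}$ cancel to several orders in $\epsilon$, so the $\ell_d$ expansion must be carried far enough, with correctly signed remainders, to pin down $\mathfrak{b}_d$ to order $d^2$. Moreover, the margin at $\tau=\frac{28}{25}$ is presumably thin, so $d_0$ may need to be fairly large. If a clean monotone-in-$d$ bound fails, I would instead bound the $d\to\infty$ limit of the right-hand side of \eqref{eq:N1stbound}, which is $\tau^{-3}\sqrt{2\pi}\,\exp(\cdot)$ at each endpoint, and control the $O(1/d)$ corrections explicitly.
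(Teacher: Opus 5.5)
Your architecture matches the paper's. Lemma \ref{lem:piecewise-unimodal} reduces everything to the two endpoints, $d=3$ is settled by direct evaluation at $\frac{33}{40}$ and $\frac{28}{25}$, large $d$ is handled by a lower bound for the right-hand side of \eqref{eq:N1stbound} that is monotone in $d$, and the remaining dimensions are evaluated directly from \eqref{eq:Fd}.

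\textbf{The gap is in the large-$d$ step, whose quantitative input is miscomputed.} From \eqref{eq:rhoNF},
\[
\mathcal{I}_{d,2}=\int_{1-\epsilon}^{1}(x-1+\epsilon)^3x^{-2}\,\dr x=\frac{\epsilon^4}{4}\bigl(1+O(\epsilon)\bigr)=\frac{4}{d^4}\Bigl(1+\frac{4}{5d}+O(d^{-2})\Bigr),
\]
not $\frac{4}{d^3}(1+O(1/d))$. The numerator $-6d^2+18d-8+3d(d-2)^2\ell_d$ in \eqref{eq:INF} equals $\frac4d+O(d^{-2})$, so the cancellation goes one order further than you assumed. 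Consequently:
\begin{itemize}
\item $\mathfrak{a}_d=1-\frac{4}{5d}+O(d^{-2})\to1$, not $\frac1d$;
\item $\mathcal{I}_{d,1}\sim\frac{3}{2d^2}$ and $\mathfrak{b}_d\sim\frac38 d^2$;
\item $\mathfrak{b}_d/\mathfrak{a}_d$ is of order $d^2$, not $d^3$.
\end{itemize}
This is not cosmetic. With $\mathfrak{a}_d\approx\frac1d$ as you state, $(\mathfrak{Y}/\lambda)^{d-1}$ would be of size $(C/d)^{(d-1)/2}$, so no lower bound of the form $\exp(-\mathrm{const})$ can come out of your formula for $\mathfrak{Y}^2/\lambda^2$. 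Likewise, your estimate $\mathfrak{Y}\gtrsim\tau^3 d$ only gives $\frac{(d-1)(d-2)(d-6)}{24\mathfrak{Y}^2}\lesssim\frac{d}{24\tau^6}$, which grows linearly in $d$ and eventually makes the last factor in \eqref{eq:N1stbound} negative. As written, the large-$d$ step cannot produce a bound exceeding $1$.

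What rescues the argument is the correct scaling $\mathfrak{Y}=\tau^3d^{3/2}\mathfrak{Q}$, where $\mathfrak{Q}:=\mathfrak{Y}/\lambda$ is bounded below by a constant close to $1$. The paper obtains this without the delicate signed truncations of $\ell_d$ you anticipate, by converting the closed forms into exact power series in $\epsilon$ (Lemma \ref{lem:q2}):
\begin{itemize}
\item $1/\mathfrak{a}_d=\sum_{n\ge0}\frac{24\epsilon^n}{(n+2)(n+3)(n+4)}$ has positive coefficients. This gives $\frac{1}{1+\mathfrak{s}_d}\le\mathfrak{a}_d<1$ with $\mathfrak{s}_d=\frac{4(d-1)}{5d(d-2)}$.
\item The combination $\mathfrak{q}_d=\mathcal{I}_{d,1}-(\frac d2-1)^2\mathcal{I}_{d,2}$ has an explicit series giving $0\le\frac d4\mathfrak{q}_d\le\frac1{8d}+\frac1{d^2}$.
\item By \eqref{eq:q2id}, $\mathfrak{Q}^2=\mathfrak{a}_d\bigl(1-\frac{d\mathfrak{q}_d}{4\tau^6}\bigr)$.
\end{itemize}
These yield
\[
\mathfrak{Q}^{d-1}\ge\exp\Bigl(-\frac{2(d-1)^2}{5d(d-2)}-\frac{(d-1)(d+8)}{2(8d^2\tau^6-d-8)}\Bigr)\to\exp\Bigl(-\frac25-\frac1{16\tau^6}\Bigr).
\]
The cubic term tends to $\frac1{24\tau^6}$; your fallback limit $\tau^{-3}\sqrt{2\pi}\exp(\cdot)$ omits the resulting non-vanishing factor $1-\frac{1}{24\tau^6}$. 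The monotone bound then exceeds $1$ from $d=11$ at $\tau=\frac45$ and from $d=16$ at $\tau=\frac{28}{25}$.

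Incidentally, your separate check $\lambda\ge\sqrt{\mathfrak{b}_d/\mathfrak{a}_d}$ is unnecessary. Since $\mathfrak{F}_d=0$ wherever $\mathfrak{X}=0$, having $\mathfrak{F}_d>1$ at an endpoint already places it in the range of Lemma \ref{lem:piecewise-unimodal}.
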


The case $d=3$ is dealt with by directly evaluating, using \eqref{eq:Fd},
\[
\mathfrak{F}_{3}\left(\left(\frac{33}{40}\right)^3 3^{3/2}\right)\approx 1.00199 >1, \qquad
\mathfrak{F}_{3}\left(\left(\frac{28}{25}\right)^3 3^{3/2}\right)\approx 1.05989 >1.
\]
The rest of Proposition \ref{prop:othertest} follows immediately from Lemma \ref{lem:piecewise-unimodal} and the following
\begin{prop}\label{prop:othertestvalues} 
Let $d\ge 4$. Then  $\mathfrak{F}_d\left({\tau_-^\bullet}^3 d^{3/2}\right)>1$ and  $\mathfrak{F}_d\left({\tau_+^\bullet}^3 d^{3/2}\right)>1$.
\end{prop}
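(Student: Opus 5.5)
We plan to mirror the proof of Proposition \ref{prop:constant2}: plug the explicit integrals \eqref{eq:INF} into the lower bound \eqref{eq:N1stbound} and bound each factor from below by an expression monotone increasing in $d$. That reduces the claim to checking one dimension $d_0$ and finitely many small $d$ by direct evaluation of \eqref{eq:Fd}. Set $\epsilon=2/d$ and expand $\ell_d=-\log(1-\epsilon)=\sum_{k\ge1}\epsilon^k/k$. In $\mathcal{I}_{d,2}$ and $\mathcal{I}_{d,1}$ the leading powers of $d$ cancel. To control these cancellations we will truncate the series with explicit remainder bounds; Lemma \ref{lem:log} and \eqref{eq:loglower} are the tools, supplemented by the bound $\sum_{k>K}\epsilon^k/k\le \epsilon^{K+1}/((K+1)(1-\epsilon))$. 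This should give rational-function two-sided bounds $\mathfrak{a}_d\ge \underline{\mathfrak a}(d)$ and $\mathfrak{b}_d\le \overline{\mathfrak b}(d)$. Heuristically, $\mathcal{I}_{d,2}\approx 8/(3d^4)\cdot(\ldots)$, so $\mathfrak a_d\to 3/2$-ish times corrections and $\mathfrak b_d=O(d)$; the precise asymptotics will be read off from the expansion.

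Next we bound $\mathfrak{Y}^2=\lambda^2\mathfrak a_d-\mathfrak b_d+(d/2-1)^2$ at $\lambda=\tau^3d^{3/2}$ for $\tau\in\{\tau^\bullet_-,\tau^\bullet_+\}$. We write
\[
\left(\frac{\mathfrak Y}{\lambda}\right)^{d-1}=\mathfrak a_d^{\frac{d-1}{2}}\left(1+\frac{(d/2-1)^2-\mathfrak b_d}{\mathfrak a_d\lambda^2}\right)^{\frac{d-1}{2}},
\]
with $\lambda^2=\tau^6d^3$. The first factor behaves like $\exp(c)$ times $(1+O(1/d))$ for an explicit constant $c$, since $\mathfrak a_d=1-\gamma/d+O(d^{-2})$ for the appropriate normalization; this is what makes the test function beat the constant one, where $\mathfrak a_d=1-2/d$ gave $\er^{-1}$. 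We will bound it below using $\log(1-x)\ge -x-x^2/(2(1-x))$ from Lemma \ref{lem:log}, followed by $\er^{-y}\ge 1-y$ on the $O(1/d)$ remainder. The second factor is $1+O(d^{-1})$, and we bound it below either by Bernoulli's inequality when the bracket exceeds $1$, or by the logarithmic bound when it is below $1$. The remaining bracket in \eqref{eq:N1stbound} is handled exactly as in \eqref{eq:factor3lower}, using $\mathfrak Y\ge \tau^3 d\sqrt{d\,\underline{\mathfrak a}(d)-\ldots}$ together with $\mathfrak c_d\ge 193/77$.

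Assembling these gives $\mathfrak F_d\ge \tau^{-3}\cdot\frac{193}{77}\cdot\Phi_\tau(d)$, where $\Phi_\tau$ is a product of factors each increasing in $d$; this is verified factor by factor, possibly after weakening the factors slightly. We then evaluate at some $d_0$ (I expect $d_0$ around $10$–$20$ for $\tau_+^\bullet=28/25$, where the margin is smallest, since $\tau^{-3}$ is then small) to get a value $>1$ in exact rational arithmetic. For $4\le d<d_0$ we evaluate \eqref{eq:Fd} directly with certified values, as in the proof of Proposition \ref{prop:constant2}.

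The main obstacle is the cancellation in $\mathcal I_{d,1}$ and $\mathcal I_{d,2}$. The numerators involve differences of terms of size $d^4$ or $d^5$ that cancel to much lower order, so enough terms of the $\ell_d$ series must be kept, and the signs of the truncation errors must go in the right direction: a lower bound for $\mathcal I_{d,2}$ in $\mathfrak a_d$'s denominator requires an upper bound on $\ell_d$, while $\mathfrak b_d$ needs an upper bound on $\mathcal I_{d,1}$ and a lower bound on $\mathcal I_{d,2}$. The resulting bounds must also stay sharp enough at $\tau^\bullet_+$, where the numerical margin in Figure \ref{fig:varNFMy} appears thin. If the margin is too thin, we will keep more series terms and raise $d_0$.
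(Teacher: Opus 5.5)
Your plan is essentially the paper's proof: expand in $\epsilon=2/d$ to control the cancellations, bound $(\mathfrak{Y}/\lambda)^{d-1}$ and the last bracket in \eqref{eq:N1stbound} from below by factors increasing in $d$, check one threshold dimension (the paper uses $d=11$ for $\tau^\bullet_-$ and $d=16$ for $\tau^\bullet_+$), and evaluate \eqref{eq:Fd} directly below it. The paper handles the cancellation more cleanly through $\mathfrak{q}_d=\mathcal{I}_{d,1}-\bigl(\frac d2-1\bigr)^2\mathcal{I}_{d,2}$: it writes $(\mathfrak{Y}/\lambda)^2=\mathfrak{a}_d\bigl(1-\frac{d\,\mathfrak{q}_d}{4\tau^6}\bigr)$ and uses explicit series for $1/\mathfrak{a}_d$ and $\mathfrak{q}_d$. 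Your heuristics are off, however: in fact $\mathfrak{a}_d=1-\frac{4}{5d}+O(d^{-2})<1$ and $\mathfrak{b}_d\sim\frac{3d^2}{8}$, so your ``second factor'' tends to $\er^{-1/(16\tau^6)}$ rather than $1$, a genuine loss of about $0.79$ at $\tau^\bullet_-$. This does not break the argument, because the logarithmic bound you plan for a bracket below $1$ captures this factor correctly.
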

For the proof, see \S\ref{proof:prop:othertestvalues}.

We summarise the results of Propositions \ref{prop:verylow}, \ref{prop:constant1}, \ref{prop:constant2}, and \ref{prop:othertest} in Figure \ref{fig:totallowpict}.

\begin{figure}[htb]
\centering
\includegraphics{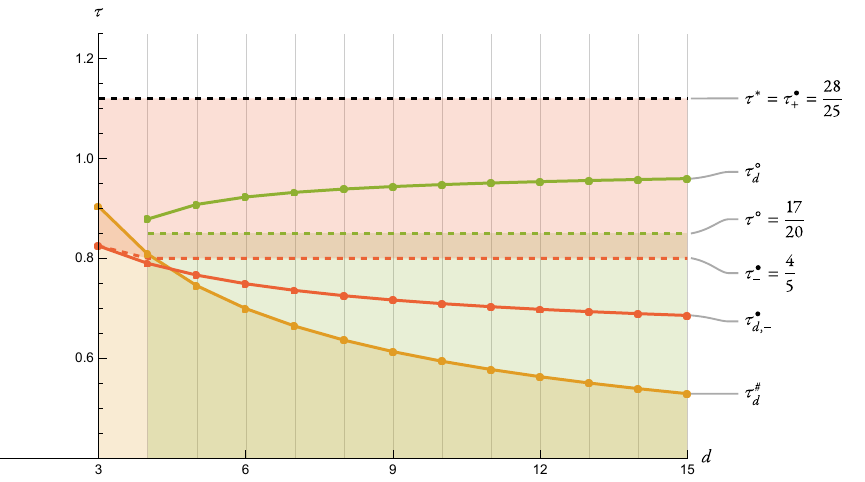}
\caption{Summary of the results of \S\ref{sec:lowlambda} in $(d,\tau)$-parameter space.\label{fig:totallowpict}}
\end{figure}

\section{Filling the gap}\label{sec:gap}

Whenever a finite gap was left in the analytic proofs in our previous papers \cite{FLPS, FLPS-AB, FLPS-Ann}, we have filled it using verified rational approximations of trapezoidal floor sums. Here, we use a different approach: namely, we create an algorithm to resolve

\

\noindent\textbf{Main task} 

\noindent\textit{Input}: an integer $d\ge 3$, a rational $\Lambda>0$ and a rational tolerance $\epsilon>0$. 

\noindent\textit{Output}: the \emph{complete} list $\mathtt{ListP}_{d,\Lambda}$ of all triples $\left(m,k,\overline{p'_{d,m,k}}\right)$ which includes \emph{all} pairs $(m,k)$ with $p'_{d,m,k}\le\Lambda$, 
with each entry also including a rational number $\overline{p'_{d,m,k}}$ such that
\[
p'_{d,m,k} < \overline{p'_{d,m,k}} \le \min\left\{p'_{d,m,k}+\epsilon, \Lambda\right\}
\]
(for the exceptional zero $p'_{d,0,1}=0$, the  output  is $(0,1,0)$).

The algorithm is designed in such a way that every claim is certified: all arithmetic is exact in $\mathbb{Q}$, all inequalities are proved, and no assertion relies on floating-point evaluation. 
Two-sided enclosures $\left[\underline{p'_{d,m,k}},\overline{p'_{d,m,k}}\right]\ni p'_{d,m,k}$ of width at most $\epsilon$ are in fact produced along the way.

\begin{definition}[Representations via a non-zero factor times a function $\mathbb{Q}\to\mathbb{Q}$]\label{defn:ratn}
Set, for $\nu\in \frac{1}{2}\mathbb{Z}$, $\nu\ge 0$, 
\begin{equation}\label{eq:deff}
F_\nu(t):=\Gamma(\nu+1)\left(\frac x2\right)^{-\nu}\left.J_\nu(x)\right|_{t=x^2/4}
=\sum_{j=0}^{\infty}\frac{(-t)^j}{j!\,\Pi_j(\nu)} ,
\end{equation}
which is entire in $t$; the series representation follows from the standard expansion \cite[Eq.~10.2.2]{dlmf}. Since $\nu\in\tfrac12\mathbb{Z}$, all coefficients of the series are rational.
If $f_{\nu, k}$ denotes the $k$th positive zero of $F_\nu$, then $f_{\nu,k}=\frac{j_{\nu,k}^2}{4}$.

Set also
\begin{equation}\label{eq:defh}
H_{d,m}(t):=m F_{m+\frac{d}{2}-1}(t)-\frac{2t}{m+\frac{d}{2}}\,F_{m+\frac{d}{2}}(t),
\end{equation}
which is entire with rational Taylor coefficients and $H_{d,m}(0)=m$. Then 
\begin{equation}\label{eq:uprime}
P_{d,m}'(x)
=\frac{x^{-\frac{d}{2}}(x/2)^{m+\frac{d}{2}-1}}{\Gamma(m+\frac{d}{2})}\; H_{d,m}\!\left(\frac{x^2}4\right).
\end{equation}
If $h_{d, m, k}$ denotes the $k$th positive zero of $H_{d,m}$, except that $h_{d, 0, 1}:=0$,  then $h_{d,m,k}=\frac{{p'_{d,m,k}}^2}{4}$.
\end{definition}

Our approach  follows the philosophy of computer-assisted proofs in spectral theory as surveyed in \cite{GS19} (see also the monographs \cite{Tuc11, NPW19}), and in particular the two-stage pattern (fast localisation, then rigorous certification) used in \cite{DS20,DGH21,DGP26}. It is based on Lemma \ref{lem:pandj} and the fact that the functions from Definition \ref{defn:ratn} can be exactly evaluated at rational points with arbitrary certified precision due to  
\begin{lemma}\label{lem:enclosures}
Let $N\in\mathbb{N}_0$ and $t\in \mathbb{Q}$, $t>0$. Under notation of Definition \ref{defn:ratn}, define, additionally, the partial sums
\[
F_{\nu, N}(t):=\sum_{j=0}^{N}\frac{(-t)^j}{j!\Pi_j(\nu)}\in\mathbb{Q}, \qquad H_{d,m,N}(t):=m F_{m+\frac{d}{2}-1, N}(t)-\frac{2t}{m+\frac{d}{2}}F_{m+\frac{d}{2},N}(t)\in\mathbb{Q}.
\]
If $N$ is large enough so that 
\[
r_{\nu,N}(t):=\frac{t}{(N+2)\left(\nu+N+2\right)}<1,
\]
then
\[
F_\nu(t)\in\left[F_{\nu,N}(t) - \delta_{F_{\nu, N}}(t),F_{\nu,N}(t) + \delta_{F_{\nu, N}}(t)\right],
\]
with 
\[ 
\delta_{F_{\nu, N}}(t):=\frac{t^{N+1}}{(1-r_{\nu,N}(t))(N+1)!\Pi_{N+1}(\nu)}\in\mathbb{Q}.
\]
Similarly, if 
\[
r_{m+\frac{d}{2}-1,N}(t)<1,
\]
then
\[
H_{d,m}(t)\in\left[H_{d,m,N}(t) - \delta_{H_{d,m, N}}(t), H_{d,m,N}(t) + \delta_{H_{d,m, N}}(t)\right],
\]
with
\[
\delta_{H_{d,m, N}}(t):=m \delta_{F_{m+\frac{d}{2}-1,N}}(t)+\frac{2t}{m+\frac{d}{2}}\delta_{F_{m+\frac{d}{2},N}}(t)\in\mathbb{Q}.
\]
The remainder estimates tend to zero super-polynomially as $N\to\infty$.
\end{lemma}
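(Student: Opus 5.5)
The plan is to bound the tail of the alternating series \eqref{eq:deff} by a geometric series. Write the terms as
\[
a_j:=\frac{(-t)^j}{j!\,\Pi_j(\nu)},
\]
where $\Pi_j(\nu)=(\nu+1)(\nu+2)\cdots(\nu+j)$ is the Pochhammer-type product used in \eqref{eq:deff}. I am assuming $\Pi_j(\nu)=\prod_{i=1}^{j}(\nu+i)$, consistent with the series for $J_\nu$.

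The ratio of consecutive terms is
\[
\left|\frac{a_{j+1}}{a_j}\right|=\frac{t}{(j+1)(\nu+j+1)}.
\]
This ratio is decreasing in $j$. For every $j\ge N+1$ it is therefore at most $\frac{t}{(N+2)(\nu+N+2)}=r_{\nu,N}(t)<1$. Consequently
\[
|a_{N+1+i}|\le |a_{N+1}|\, r_{\nu,N}(t)^i\qquad\text{for all } i\ge 0.
\]
Summing this geometric series gives
\[
\bigl|F_\nu(t)-F_{\nu,N}(t)\bigr|\le\sum_{j\ge N+1}|a_j|\le\frac{|a_{N+1}|}{1-r_{\nu,N}(t)},
\]
and the right-hand side is exactly $\delta_{F_{\nu,N}}(t)$. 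I will not use the sharper alternating-series estimate, since the statement only requires the cruder bound.

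Rationality follows directly. Since $t\in\mathbb{Q}$ and $\nu\in\frac12\mathbb{Z}$, every term, and hence $F_{\nu,N}(t)$ and $\delta_{F_{\nu,N}}(t)$, is rational.

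For $H_{d,m}$, apply the $F$-bound twice: once with $\nu=m+\frac d2-1$ and once with $\nu=m+\frac d2$. The hypothesis is stated only for $r_{m+\frac d2-1,N}(t)<1$. But $r_{\nu,N}(t)$ is decreasing in $\nu$, so $r_{m+\frac d2,N}(t)\le r_{m+\frac d2-1,N}(t)<1$, and both enclosures hold. The triangle inequality applied to \eqref{eq:defh}, with weights $m$ and $\frac{2t}{m+\frac d2}$, then yields $\delta_{H_{d,m,N}}(t)$. This quantity is again rational.

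Finally, for fixed $t$ and $\nu$, the bound $\delta_{F_{\nu,N}}(t)\le \frac{2t^{N+1}}{(N+1)!}$ holds once $r_{\nu,N}(t)\le\frac12$, using $\Pi_{N+1}(\nu)\ge1$. This decays faster than any power of $N$, and the same holds for $\delta_{H_{d,m,N}}$.

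There is no real obstacle here. The only point needing care is checking the monotonicity of the term ratio for $j\ge N+1$, so that $r_{\nu,N}$ (evaluated at $j=N+1$) bounds the ratio uniformly over the entire tail, together with the monotonicity in $\nu$ used for the $H$ case.
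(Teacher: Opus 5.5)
Your proof is correct, but it takes a slightly different route from the paper's. The paper disposes of the lemma in one line as ``the standard remainder estimates of alternating series'', i.e.\ the Leibniz bound. You instead dominate the whole tail in absolute value by a geometric series and deliberately ignore the alternation.

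The two are close but not identical. Since $r_{\nu,N}(t)<1$ makes $|a_j|$ strictly decreasing for $j\ge N+1$, the Leibniz estimate gives the sharper bound $|F_\nu(t)-F_{\nu,N}(t)|\le |a_{N+1}|$. It also gives the sign of the error, namely that of $a_{N+1}$, which would yield a one-sided enclosure. The stated $\delta_{F_{\nu,N}}(t)=|a_{N+1}|/(1-r_{\nu,N}(t))$ is a weakening of this. Your geometric majorant, by contrast, is exactly what produces the factor $(1-r_{\nu,N}(t))^{-1}$ appearing in the statement, and it needs no sign structure at all.

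You also supply two details the paper leaves implicit. First, $r_{\nu,N}(t)$ is decreasing in $\nu$, so the single hypothesis $r_{m+\frac d2-1,N}(t)<1$ covers both enclosures needed for $H_{d,m}$. Second, the explicit bound $2t^{N+1}/(N+1)!$ once $r_{\nu,N}(t)\le\frac12$ justifies the super-polynomial decay claim.
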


\begin{proof} These are the standard remainder estimates of alternating series.
\end{proof}

This means that the roots of these functions can be also localised with arbitrary precision. For further details of the process and its theoretical justification we refer to Appendix \ref{app:rational}.

For each $d\in \{3,\dots,12\}$, we apply the procedure $\mathtt{MAIN}(d,\Lambda_{d}^*,\epsilon)$ with $\Lambda_{d}^*:=\ceiling{{\tau_d^*}^3 d^{3/2}}$ and $\epsilon=\frac{1}{100}$. Let $K_d=\#\mathtt{ListP}_{d,\Lambda_d^*}$. We re-arrange and re-index the list $\mathtt{ListP}_{d,\Lambda_d^*}$ into another list of quadruples $\left(n_i, m_i, k_i, \overline{p'_{d,m_i,k_i}}\right)$, $i=1,\dots, K_d$, with $\overline{p'_{d,m_i,k_i}} < \overline{p'_{d,m_{i+1},k_{i+1}}}$, $\left(n_1, m_1, k_1, \overline{p'_{d,m_1,k_1}}\right):=(1,0,1,0)$, and $n_{i+1}:=n_i+\kappa_{d,m_i}$, so that the Neumann eigenvalues of $\mathbb{B}^d$ satisfy
\[
\mu_{n_i}=\dots=\mu_{n_i+\kappa_{d,m_i}-1}<\left(\overline{p'_{d,m_i,k_i}}\right)^2.
\]
It  then remains to check if 
\begin{equation}\label{eq:checkpolya}
\max_{i=2,\dots,K_d} \frac{w_d \left(\overline{p'_{d,m_i,k_i}}\right)^d}{n_i-1}<1;
\end{equation}
all the numbers involved are rational except that for odd $d$ a factor $\frac{1}{\pi}$ appears in $w_d$ which we replace  by $\frac{1}{3}$. The summary of results is below; the last column shows a numerical approximation of the left-hand side of \eqref{eq:checkpolya}: the numbers are in fact exact but too long to display.

{\centering
\begin{longtable}{@{}*{5}{>{$}r<{$}}@{}}
\toprule
d & \Lambda_d^* & K_d & \mathcal{N}^\Neu_{\mathbb{B}^d}\left(\Lambda_d^*\right)&\text{approx. LHS of \eqref{eq:checkpolya}}\\
\midrule
 3 & 19 & 50 & 570 & 0.9061 \\
 4 & 22 & 63 & 4701 & 0.8533 \\
 5 & 25 & 77 & 36853 & 0.8396 \\
 6 & 29 & 101 & 379315 & 0.7688 \\
 7 & 34 & 133 & 4307698 & 0.8230 \\
 8 & 39 & 171 & 49018662 & 0.7520 \\
 9 & 44 & 215 & 650176935 & 0.7673 \\
 10 & 49 & 262 & 7562155655 & 0.7290 \\
 11 & 54 & 314 & 105849911256 & 0.7484 \\
 12 & 60 & 386 & 1524056239175 & 0.7016 \\
\bottomrule\\
\caption{Computer-assisted data}\label{table:data}
\end{longtable}
}

\begin{appendices} 
\renewcommand{\thesection}{\Alph{section}}

\section{Properties of $G_\lambda$}\label{app:G}

We recall that $G_\lambda$ is defined by \eqref{eq:Glambda}.

\begin{lemma}[{\cite[Lemma 4.5]{FLPS}}]\label{lem:Gdiff} 
The function $G_\lambda:[0,\lambda]\to\left[0,\frac{\lambda}{\pi}\right]$ is a strictly monotone decreasing convex $C^1$ function with
\[
\begin{alignedat}{3}
&&\qquad G_\lambda(0) &= \frac{\lambda}\pi,&\qquad G_\lambda(\lambda) &= 0,\\
G'_\lambda(z) &= - \frac{1}{\pi}\arccos \frac{z}{\lambda}, &\qquad G'_\lambda(0) &= - \frac{1}{2}, &\qquad G'_\lambda(\lambda) &= 0.
\end{alignedat}
\]
\end{lemma}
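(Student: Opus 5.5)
The plan is to verify each claim directly from the formula \eqref{eq:Glambda}, working on $z\in(0,\lambda)$ and then passing to the endpoints by continuity.

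\emph{Endpoint values.} At $z=0$ we have $\arccos 0=\pi/2$, but the term containing it is multiplied by $z=0$, so $G_\lambda(0)=\lambda/\pi$. At $z=\lambda$ we have $\sqrt{\lambda^2-\lambda^2}=0$ and $\arccos 1=0$, so $G_\lambda(\lambda)=0$.

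\emph{First derivative.} On $(0,\lambda)$ I will differentiate term by term:
\[
\frac{\dr}{\dr z}\sqrt{\lambda^2-z^2}=-\frac{z}{\sqrt{\lambda^2-z^2}},
\qquad
\frac{\dr}{\dr z}\left(z\arccos\frac{z}{\lambda}\right)=\arccos\frac{z}{\lambda}-\frac{z}{\lambda\sqrt{1-z^2/\lambda^2}}=\arccos\frac{z}{\lambda}-\frac{z}{\sqrt{\lambda^2-z^2}}.
\]
The two algebraic terms cancel, which leaves $G'_\lambda(z)=-\frac1\pi\arccos\frac z\lambda$. This expression extends continuously to $[0,\lambda]$ with values $-\frac12$ at $z=0$ and $0$ at $z=\lambda$. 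Because $G_\lambda$ is itself continuous on the closed interval, the mean value theorem shows that the one-sided derivatives at the endpoints exist and equal these limits. Hence $G_\lambda\in C^1[0,\lambda]$ with the stated derivative values. If we also use the zero extension for $z>\lambda$, it joins in a $C^1$ way at $z=\lambda$, since both the value and the derivative vanish there.

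\emph{Monotonicity and convexity.} For $z\in[0,\lambda)$, $\arccos(z/\lambda)>0$, so $G'_\lambda<0$ except at the single endpoint $z=\lambda$. Therefore $G_\lambda$ is strictly decreasing. Since $\arccos$ is decreasing, $G'_\lambda$ is increasing; explicitly, $G''_\lambda(z)=\frac{1}{\pi\sqrt{\lambda^2-z^2}}>0$ on the open interval. So $G_\lambda$ is convex.

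\emph{Range.} Monotonicity together with the endpoint values gives the range $[0,\lambda/\pi]$.

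There is no real obstacle here. The only point needing care is the endpoint behaviour of the derivative at $z=\lambda$, where $\sqrt{\lambda^2-z^2}$ is not differentiable. The cancellation on the open interval removes that singularity, and continuity of $G_\lambda$ combined with the mean value theorem settles the one-sided derivatives.
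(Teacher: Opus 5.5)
Your proof is correct. The paper gives no proof of this lemma and simply cites \cite[Lemma 4.5]{FLPS}; your argument is the standard direct computation: differentiate \eqref{eq:Glambda}, observe that the algebraic terms cancel to give $G'_\lambda(z)=-\frac1\pi\arccos\frac z\lambda$, then read off the endpoint values, monotonicity and convexity, with the mean value theorem settling the one-sided derivative at $z=\lambda$.
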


\begin{lemma}[{\cite[Corollary 4.7]{FLPS}}]\label{lem:GintW} 
Let $l\in\mathbb{N}_0$. Then
\[
\frac{2}{l!} \int_0^{\lambda} z^{l} G_\lambda(z)\, \dr z = w_{l+2}\lambda^{l+2}.
\]
\end{lemma}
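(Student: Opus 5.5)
The identity is proved by direct computation. I will scale out $\lambda$, integrate by parts twice so that only a Wallis-type integral remains, and then match constants with Legendre's duplication formula. Throughout I use the formula $G'_\lambda(z)=-\frac1\pi\arccos\frac z\lambda$ and the boundary values $G_\lambda(\lambda)=0$, $G_\lambda(0)=\frac{\lambda}{\pi}$ from Lemma \ref{lem:Gdiff}.

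\emph{First integration by parts.} Take $\frac{z^{l+1}}{l+1}$ as the antiderivative of $z^l$. The boundary term vanishes at $z=0$ because of the factor $z^{l+1}$, and at $z=\lambda$ because $G_\lambda(\lambda)=0$. This gives
\[
\int_0^\lambda z^l G_\lambda(z)\,\dr z=\frac{1}{\pi(l+1)}\int_0^\lambda z^{l+1}\arccos\frac z\lambda\,\dr z .
\]

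\emph{Second integration by parts.} Now take $\frac{z^{l+2}}{l+2}$ as the antiderivative of $z^{l+1}$. The boundary terms vanish again, since $\arccos 1=0$ at the upper end and $z^{l+2}=0$ at the lower end. Using $\frac{\dr}{\dr z}\arccos\frac z\lambda=-(\lambda^2-z^2)^{-1/2}$, we obtain
\[
\int_0^\lambda z^{l+1}\arccos\frac z\lambda\,\dr z=\frac{1}{l+2}\int_0^\lambda\frac{z^{l+2}}{\sqrt{\lambda^2-z^2}}\,\dr z .
\]

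\emph{The Wallis integral.} Set $n:=l+2$ and substitute $z=\lambda\cos\theta$. This gives
\[
\int_0^\lambda\frac{z^{n}}{\sqrt{\lambda^2-z^2}}\,\dr z=\lambda^{n}\int_0^{\pi/2}\cos^{n}\theta\,\dr\theta=\lambda^{n}\,\frac{\sqrt\pi}{2}\,\frac{\Gamma\left(\frac{n+1}{2}\right)}{\Gamma\left(\frac n2+1\right)},
\]
which is the standard Beta-function evaluation of the Wallis integral.

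\emph{Collecting the constants.} Combining the three displays and multiplying by $\frac{2}{l!}$ yields
\[
\frac{2}{l!}\int_0^\lambda z^lG_\lambda(z)\,\dr z=\lambda^{l+2}\,\frac{\Gamma\left(\frac{l+3}{2}\right)}{\sqrt\pi\,(l+2)!\,\Gamma\left(\frac l2+2\right)} .
\]
Since $w_{l+2}=\left(2^{l+2}\Gamma\left(\frac l2+2\right)^2\right)^{-1}$, the claim is equivalent to the identity
\[
2^{l+2}\,\Gamma\left(\frac{l+3}{2}\right)\Gamma\left(\frac{l+4}{2}\right)=\sqrt\pi\,(l+2)! .
\]
This is exactly Legendre's duplication formula $\Gamma(s)\Gamma\left(s+\frac12\right)=2^{1-2s}\sqrt\pi\,\Gamma(2s)$ with $s=\frac{l+3}{2}$, because then $\Gamma(2s)=\Gamma(l+3)=(l+2)!$ and $2^{1-2s}=2^{-(l+2)}$.

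There is no real obstacle in this argument. The only points needing care are checking that every boundary term vanishes (this uses that $G_\lambda$ is $C^1$ on the closed interval) and tracking the constants up to the duplication-formula step.
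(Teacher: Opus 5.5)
Your proof is correct: both integrations by parts, the Wallis integral $\int_0^{\pi/2}\cos^{l+2}\theta\,\dr\theta$, and the duplication formula with $s=\frac{l+3}{2}$ all check out, and this is essentially how the identity is obtained in \cite{FLPS} (reduction to a Beta-type integral, then the duplication formula), from which the present paper simply quotes it as \cite[Corollary 4.7]{FLPS}. The one point to make explicit is that in the second integration by parts $\arccos\frac{z}{\lambda}$ is not $C^1$ at $z=\lambda$. It is, however, absolutely continuous with integrable derivative $-(\lambda^2-z^2)^{-1/2}$, so the step is justified; alternatively, integrate up to $\lambda-\delta$ and let $\delta\to0^+$.
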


The following result improves \cite[Lemma 6.2]{FLPS-Ann}. Set
\begin{equation}\label{eq:c1c2}
c_1:=\frac{2\sqrt{2}}{3\pi}\approx 0.300105, \qquad c_2:=\frac{1}{5}\left(1-\frac{2\sqrt{2}}{\pi}\right)\approx 0.0199367.
\end{equation}

\begin{lemma}\label{lem:Gbounds}
Let $0<z<\lambda$, and let
\[
\theta:=1-\frac{z}{\lambda}.
\]
Then
\[
c_1\lambda \theta^{3/2} < G_\lambda(z) <
\lambda\left(c_1\theta^{3/2} + c_2\theta^{5/2}\right).
\]
\end{lemma}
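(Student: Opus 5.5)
Both bounds follow from a one-variable reduction. Write $z=\lambda\cos\phi$ with $\phi\in(0,\pi/2)$, so that $\arccos(z/\lambda)=\phi$ and
\[
\pi G_\lambda(z)=\lambda\left(\sin\phi-\phi\cos\phi\right),\qquad \theta=1-\cos\phi .
\]
The claim becomes a pair of inequalities in $\theta\in(0,1)$ alone:
\[
c_1\theta^{3/2}<g(\theta):=\frac{1}{\pi}\left(\sqrt{2\theta-\theta^2}-(1-\theta)\arccos(1-\theta)\right)<c_1\theta^{3/2}+c_2\theta^{5/2}.
\]

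The constants are set by the two endpoints. As $\theta\to0^+$, expanding in $\phi$ gives $\sin\phi-\phi\cos\phi=\phi^3/3-\phi^5/30+\dots$. With $\phi^2=2\theta+\theta^2/3+\dots$ this becomes $g(\theta)=c_1\theta^{3/2}+O(\theta^{5/2})$, so $c_1=2\sqrt2/(3\pi)$ is the sharp leading coefficient. At $\theta=1$ we have $g(1)=1/\pi$, and indeed $c_1+c_2=1/\pi$; this is exactly how $c_2$ is defined in \eqref{eq:c1c2}. The upper bound is therefore an equality at $\theta=1$, and the open interval $0<z<\lambda$, i.e. $\theta<1$, is the right range for a strict inequality.

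To compare, I differentiate in $\theta$. By Lemma \ref{lem:Gdiff}, $\frac{\dr}{\dr z}G_\lambda=-\frac1\pi\arccos\frac z\lambda$, which gives $g'(\theta)=\frac1\pi\arccos(1-\theta)=\phi/\pi$. The function $h_1(\theta):=g(\theta)-c_1\theta^{3/2}$ vanishes at $0$, and
\[
h_1'(\theta)=\frac{1}{\pi}\left(\arccos(1-\theta)-\sqrt2\,\theta^{1/2}\right).
\]
Substituting $\theta=1-\cos\phi=2\sin^2(\phi/2)$ gives $\sqrt{2\theta}=2\sin(\phi/2)<\phi$. Hence $h_1'>0$ on $(0,1]$, and the lower bound follows immediately.

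For the upper bound, set $h_2(\theta):=c_1\theta^{3/2}+c_2\theta^{5/2}-g(\theta)$. It satisfies $h_2(0)=h_2(1)=0$, and it is positive near $0$ because the $\theta^{5/2}$ coefficient of $g$ is below $c_2$. That coefficient equals $\frac{2\sqrt2}{\pi}\bigl(\frac1{24}-\frac1{60}\bigr)\cdot\frac32\approx 0.0135$ (to be confirmed carefully), while $c_2\approx 0.0199$. Positivity on all of $(0,1)$ then follows if $h_2$ has only one interior critical point. In the variable $s=\sqrt\theta$, where $\phi=2\arcsin(s/\sqrt2)$, the derivative reads
\[
\pi h_2'=\tfrac32\pi c_1 s+\tfrac52\pi c_2 s^3-2\arcsin(s/\sqrt2).
\]
This expression vanishes at $s=0$ and is positive for small $s$. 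Its $s$-derivative equals $\frac32\pi c_1+\frac{15}{2}\pi c_2 s^2-\frac{2}{\sqrt{2-s^2}}$, which is $0$ at $s=0$ since $\frac32\pi c_1=\sqrt2$. The second $s$-derivative is
\[
15\pi c_2 s-\frac{2s}{(2-s^2)^{3/2}},
\]
which is positive for small $s$ and changes sign exactly once on $(0,1)$, at $(2-s^2)^{3/2}=2/(15\pi c_2)$. This forces $\pi h_2'$ to increase and then decrease, so it has at most one zero in $(0,1)$. Consequently $h_2$ increases and then decreases, and with both endpoint values equal to $0$ it is positive on $(0,1)$.

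The main obstacle is this concavity-sign bookkeeping for the upper bound, in particular checking numerically that $h_2'(1)<0$ so that the single sign change actually occurs. In the $s$-variable this is $\frac32\pi c_1+\frac52\pi c_2-2\arcsin(1/\sqrt2)=\sqrt2+0.1566-\frac{\pi}{2}\approx0.0001$, which is dangerously close to zero. If it is in fact slightly positive, then $h_2$ is increasing near $\theta=1$ and ends at $0$, which would contradict positivity. So I would first recompute $c_2$ against $g(1)$ exactly. If $h_2'(1)\ge0$ turns out to hold, I would instead argue via the monotonicity of $h_2(\theta)/\theta^{5/2}$.
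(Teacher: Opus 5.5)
Your reduction to $g(\theta)$ and your lower bound are correct; $\sqrt{2\theta}=2\sin(\phi/2)<\phi$ is equivalent to the paper's inequality $\cos(\sqrt2\sqrt{1-v})>v$. For the upper bound you have found the paper's own auxiliary function. Since $\frac32\pi c_1=\sqrt2$ and $\frac52\pi c_2=\frac\pi2-\sqrt2$, your $\pi h_2'$ in the variable $s=\sqrt\theta$ is exactly $h(s)=\sqrt2\,s+cs^3-\arccos(1-s^2)$ with $c=\frac\pi2-\sqrt2$.

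However, the step where you conclude that $h_2>0$ rests on two false endpoint facts.
\begin{itemize}
\item $c_1+c_2\neq 1/\pi$. In fact $c_1+c_2=\frac15+\frac{4\sqrt2}{15\pi}\approx0.32004$, whereas $g(1)=\frac1\pi\approx0.31831$. So $h_2(1)\approx0.0017>0$, and the upper bound is \emph{not} attained at $\theta=1$.
\item $\pi h_2'(1)=\sqrt2+\bigl(\frac\pi2-\sqrt2\bigr)-\frac\pi2=0$ \emph{exactly}. Your value $\approx0.0001$ is rounding error. The constant $c_2$ is chosen so that the \emph{derivative} bound is sharp at $\theta=1$ (i.e.\ at $z=0$), not the value.
\end{itemize}
Consequently your picture of $h_2$ rising and then falling between two zeros is wrong. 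Both your worry about the sign of $h_2'(1)$ and the unspecified fallback via $h_2/\theta^{5/2}$ come from the false premise $h_2(1)=0$. As written, the upper-bound argument does not close.

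The fix uses only what you have already established, once the endpoint values are corrected. In the variable $s$, $\pi h_2'$ vanishes at $s=0$. By your single sign change of its second $s$-derivative, starting from a zero first $s$-derivative, it increases and then decreases on $(0,1)$. It also vanishes exactly at $s=1$. Hence $\pi h_2'>0$ on $(0,1)$. Therefore $h_2$ is strictly increasing, and $h_2(\theta)>h_2(0)=0$ for $\theta\in(0,1)$; the value $h_2(1)$ is never needed.

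This is precisely the paper's proof. There it is phrased as the pointwise inequality $\frac1\pi\arccos v<\bigl(\frac12-\bigl(\frac12-\frac{\sqrt2}{\pi}\bigr)v\bigr)\sqrt{1-v}$ on $(0,1)$, which is established via $h>0$ and then integrated from $z$ to $\lambda$.

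A minor aside: the $\theta^{5/2}$ coefficient of $g$ is $\frac{\sqrt2}{30\pi}\approx0.0150$, not the expression you wrote. This does not matter, since positivity near $0$ already follows from the sign of the second derivative.
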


\begin{proof}
We first prove that, for $0<w=v\lambda<\lambda$, with $0<v<1$, 
we have
\begin{equation}\label{eq:wineq-improved}
\frac{\sqrt{2}}{\pi}\sqrt{1-v} < -G'_\lambda(w) = \frac{1}{\pi}\arccos v 
< \left(\frac{1}{2} -
\left(\frac{1}{2}-\frac{\sqrt{2}}{\pi}\right) v \right) \sqrt{1-v}.
\end{equation}

The left inequality in \eqref{eq:wineq-improved} is equivalent to 
\begin{equation}\label{eq:vineq-improved}
\cos\left(\sqrt2\sqrt{1-v}\right) > v.
\end{equation}
We have
\[
\left. \left(
\cos\left(\sqrt{2}\sqrt{1-v}\right)-v
\right) \right|_{v=1}=0,
\]
and
\[
\frac{\dr}{\dr v}
\left(
\cos\left(\sqrt2\sqrt{1-v}\right)-v
\right) =\frac{\sin\left(\sqrt{2}\sqrt{1-v}\right)}{\sqrt{2}\sqrt{1-v}}-1
<0\qquad\text{for all } v\in(0,1),
\]
hence \eqref{eq:vineq-improved} holds for all $v\in(0,1)$.

In order to prove the right inequality in \eqref{eq:wineq-improved}, we introduce yet another variable
\[
t:=\sqrt{1-v}\in(0,1),
\]
and restate the inequality, after minimal simplifications, as
\begin{equation}\label{eq:phiineq}
h(t):=\sqrt{2}t+ c t^3-\arccos\left(1-t^2\right)>0
\qquad \text{for all }0< t <1,
\end{equation}
with
\[ 
c:=\frac{\pi}{2}-\sqrt{2}\approx 0.156583.
\]
It is easy to check that
\[
h(0)=h'(0)=h''(0)=h(1)=0,\qquad h'''(0)>0,\qquad h'(1), h''(1)<0.
\]
Moreover, using the definition of the constant $c$,  we obtain that
\[
h''(t)
=
t\left(
3\pi-6\sqrt2-\frac{2}{(2-t^2)^{3/2}}
\right)
\]
has a single zero in the interval $(0,1)$, see Figure \ref{fig:phi}, and thus proves \eqref{eq:phiineq} and therefore the right inequality in \eqref{eq:vineq-improved}.

\begin{figure}[htb]
\centering
\includegraphics{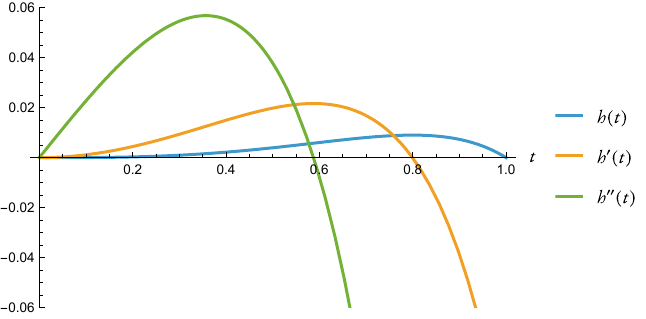}
\caption{Function $h(t)$ and its derivatives.\label{fig:phi}}
\end{figure}

Finally, since $G_\lambda(\lambda)=0$, we have
\[
G_\lambda(z)
=
\int_z^\lambda \left(-G'_\lambda(w)\right)\,\dr w.
\]
Integrating the left inequality \eqref{eq:wineq-improved} from $z$ to $\lambda$ gives the lower bound
\[
G_\lambda(z)
>
\frac{\sqrt2}{\pi}
\int_z^\lambda
\left(1-\frac{w}{\lambda}\right)^{1/2}\,\dr w
=
c_1\lambda
\left(1-\frac{z}{\lambda}\right)^{3/2}.
\]
For the upper bound, we obtain
\[
G_\lambda(z)
<
\int_z^\lambda
\left(
\frac{1}{2}
-
\left(\frac{1}{2}-\frac{\sqrt2}{\pi}\right)\frac{w}{\lambda}
\right)
\left(1-\frac{w}{\lambda}\right)^{1/2}
\,\dr w.
\]
Writing $u=1-w/\lambda$, this becomes
\[
G_\lambda(z)
<
\lambda
\int_0^\theta
\left(
\frac{\sqrt2}{\pi}
+
\left(\frac{1}{2}-\frac{\sqrt2}{\pi}\right)u
\right)
u^{1/2}\,\dr u,
\]
therefore
\[
G_\lambda(z)
<
\lambda\left(
c_1\theta^{3/2}
+
c_2\theta^{5/2}
\right),
\]
as claimed.
\end{proof}

\section{Combinatorial bounds}\label{app:comb}
Throughout this appendix,  $\kappa_{d,m}$ is defined by \eqref{eq:kappadm}, and 
\begin{equation}\label{eq:Pin}
\Pi_n(x):=(x+1)\times\cdots\times(x+n), \qquad x\ge -1, n\in\mathbb{N}; \quad \Pi_0(x):=1 
\end{equation}
is related to Pochhammer's symbol or rising factorial $(x)_n:=x(x+1)\times \cdots \times  (x+n-1)$ by $\Pi_n(x)=(x+1)_n$.

\begin{lemma}\label{lem:kappas}
Let $d\ge 3$, $m\ge 0$, and $l\ge 0$. Then
\begin{equation}\label{eq:kappadmalt}
\kappa_{d,m}=\frac{2m+d-2}{(d-2)!}\Pi_{d-3}(m)
\end{equation}
and
\begin{equation}\label{eq:kappadmsum}
\sum_{j=0}^{l} \kappa_{d,j}=\frac{2\left(l+\frac{d-1}{2}\right)}{(d-1)!}\Pi_{d-2}(l).
\end{equation}
\end{lemma}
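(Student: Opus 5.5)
Both identities are elementary consequences of \eqref{eq:kappadm}, and I will derive them by rewriting binomial coefficients as products of consecutive integers, that is, in terms of $\Pi_n$ from \eqref{eq:Pin}.

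\textbf{Formula \eqref{eq:kappadmalt}.} First write
\[
\binom{m+d-1}{d-1}=\frac{\Pi_{d-1}(m)}{(d-1)!}=\frac{(m+1)\cdots(m+d-1)}{(d-1)!}.
\]
Similarly, for $m\ge 0$,
\[
\binom{m+d-3}{d-1}=\frac{(m-1)m(m+1)\cdots(m+d-3)}{(d-1)!},
\]
which correctly vanishes when $m=0$ or $m=1$. Both products share the factor $\Pi_{d-3}(m)=(m+1)\cdots(m+d-3)$. Factoring it out gives
\[
\kappa_{d,m}=\frac{\Pi_{d-3}(m)}{(d-1)!}\bigl((m+d-2)(m+d-1)-(m-1)m\bigr).
\]
The bracket equals $(d-1)(2m+d-2)$, and dividing by $(d-1)!$ yields \eqref{eq:kappadmalt}. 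The case $d=3$, where $\Pi_0=1$, is covered by the same computation. For $m=0$ the general formula also holds, since it gives $\kappa_{d,0}=1$.

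\textbf{Formula \eqref{eq:kappadmsum}.} The sum telescopes:
\[
\sum_{j=0}^{l}\left[\binom{j+d-1}{d-1}-\binom{j+d-3}{d-1}\right]=\binom{l+d-1}{d-1}+\binom{l+d-2}{d-1}.
\]
This holds because $\binom{j+d-3}{d-1}=0$ for $j=0,1$, so the terms shift by two indices. Writing both binomials over $(d-1)!$ and extracting the common factor $\Pi_{d-2}(l)=(l+1)\cdots(l+d-2)$ gives
\[
\frac{\Pi_{d-2}(l)}{(d-1)!}\bigl((l+d-1)+l\bigr)=\frac{2\left(l+\frac{d-1}{2}\right)}{(d-1)!}\Pi_{d-2}(l),
\]
which is \eqref{eq:kappadmsum}. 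Alternatively, one could verify this by induction on $l$ using \eqref{eq:kappadmalt}, but the telescoping argument is more direct.

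The only step requiring care is the bookkeeping of boundary cases: the conventions for $\binom{n}{k}$ when $n<k$ (including $n=-1,-2$ at $m=0$), and the degenerate product $\Pi_0$ when $d=3$. I would check these explicitly, but they pose no real difficulty.
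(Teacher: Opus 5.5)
Your proof is correct and follows the paper's argument: the first identity is obtained exactly as in the paper, by factoring $\Pi_{d-3}(m)$ out of $\Pi_{d-1}(m)$ and $(m-1)m\,\Pi_{d-3}(m)$. For the second identity you reach the same intermediate expression $\binom{l+d-1}{d-1}+\binom{l+d-2}{d-1}$ and then factor out $\Pi_{d-2}(l)$, the only difference being that you get there by a shift-by-two telescoping (using $\binom{d-2}{d-1}=\binom{d-3}{d-1}=0$) rather than the paper's hockey-stick identity followed by Pascal's rule, which is slightly more direct; your worry about negative upper indices is unnecessary, since none arise for $d\ge 3$.
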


\begin{proof} 
These identities are standard in the theory of spherical harmonics, see, e.g. \cite{AtHa}. They are proved by a direct computation, and we include the proofs for completeness.  

We have
\[
\begin{split}
\kappa_{d,m}&=\binom{m+d-1}{d-1}-\binom{m+d-3}{d-1}=\frac{1}{(d-1)!}\left(\Pi_{d-1}(m)-\Pi_{d-1}(m-2)\right)\\
&=\frac{1}{(d-1)!}\Pi_{d-3}(m)\left((m+d-2)(m+d-1)-(m-1)m\right)=\frac{2m+d-2}{(d-2)!}\Pi_{d-3}(m),
\end{split}
\]
and
\[
\begin{split}
\sum_{k=0}^{l} \kappa_{d,k}&=
\sum_{k=0}^{l} \left(\binom{k+d-1}{d-1}-\binom{k+d-3}{d-1}\right)\\
&=\binom{l+d}{d}-\binom{l+d-2}{d}=\binom{l+d-1}{d-1}+\binom{l+d-1}{d}-\binom{l+d-2}{d}\\
&=\binom{l+d-2}{d-1}+\binom{l+d-1}{d-1}
=\frac{2\left(l+\frac{d-1}{2}\right)\Pi_{d-2}(l)}{(d-1)!},
\end{split}
\]
where we have additionally used the standard identity \cite[Eq. 26.3.7]{dlmf} 
\[
\sum_{i=l}^{r}\binom{i}{l}=\binom{r+1}{l+1},\qquad r\ge l,
\]
and another identity \cite[Eq. 26.3.5]{dlmf},
\[
\binom{k}{l}=\binom{k+1}{l+1}-\binom{k}{l+1},\qquad k\ge l.
\]
\end{proof}

\begin{lemma}\label{lem:Pi1}
Let $x\ge -1$, $n\in\mathbb{N}$. Then
\begin{equation}\label{eq:Pibounds1}
\left(x+\frac{n+1}{2}\right)^n -\alpha_n\left(x+\frac{n+1}{2}\right)^{n-2}\le \Pi_n(x)\le \left(x+\frac{n+1}{2}\right)^n,
\end{equation}
where
\[
\alpha_n:=\frac{(n-1)n(n+1)}{24}.
\]
\end{lemma}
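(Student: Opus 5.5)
We will write $c:=\frac{n+1}{2}$ and $y:=x+c$. Since $x\ge-1$, we have $y\ge c-1=\frac{n-1}{2}\ge 0$. The factors of $\Pi_n(x)$ are $x+j=y+(j-c)$ for $j=1,\dots,n$, and the shifts $s_j:=j-c$ run over $\{-\frac{n-1}{2},\dots,\frac{n-1}{2}\}$, which is symmetric about zero. Pairing $j$ with $n+1-j$ gives
\[
\Pi_n(x)=y^{\varepsilon}\prod_{k}\left(y^2-a_k^2\right),
\]
where $a_k$ ranges over the positive shifts, and $\varepsilon=1$ if $n$ is odd (the middle factor $s=0$) and $\varepsilon=0$ otherwise.

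\textbf{Upper bound.} Each factor satisfies $y^2-a_k^2\le y^2$. Every factor is nonnegative, because $y\ge\frac{n-1}{2}\ge a_k$. Taking the product gives $\Pi_n(x)\le y^n$ at once.

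\textbf{Lower bound.} Set $u_k:=a_k^2/y^2\in[0,1]$ (for $y>0$; the case $y=0$ forces $n=1$, which is trivial). We then need
\[
\prod_k(1-u_k)\ge 1-\sum_k u_k.
\]
This is the standard Weierstrass product inequality, proved by induction on the number of factors using $u_k\in[0,1]$. It yields
\[
\Pi_n(x)\ge y^n-\Bigl(\sum_k a_k^2\Bigr)y^{n-2}.
\]
It remains to identify $\sum_k a_k^2=\frac12\sum_{j=1}^n (j-c)^2$. This is half of $n$ times the variance of the uniform distribution on $\{1,\dots,n\}$, namely $\frac12\cdot n\cdot\frac{n^2-1}{12}=\frac{(n-1)n(n+1)}{24}=\alpha_n$.

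\textbf{Main obstacle.} There is no real obstacle. The only care needed is to check that every paired factor $y^2-a_k^2$ is nonnegative, which is exactly where the hypothesis $x\ge-1$ enters: the largest shift $a_k=\frac{n-1}{2}$ does not exceed $y$. Without this, the Weierstrass inequality could fail.
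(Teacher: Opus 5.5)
Your proof is correct and follows essentially the same route as the paper. The paper also gets the lower bound by pairing the factors symmetrically about $y=x+\frac{n+1}{2}$, writing $\Pi_n(x)=y^n\prod_k\left(1-a_k^2/y^2\right)$ with every factor in $[0,1]$ (using $x\ge -1$), and applying $\prod(1-u_k)\ge 1-\sum u_k$. The differences are cosmetic: the paper handles even and odd $n$ separately, summing $(k-\frac12)^2$ and $k^2$ explicitly, and proves the upper bound by AM--GM, whereas you treat both parities at once via the variance identity and read the upper bound directly off the same paired factorization.
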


\begin{proof}
Set
\[
y:=x+\frac{n+1}{2}.
\]
Then $y\ge (n-1)/2$.

To prove the upper bound, we apply the AM-GM inequality
\[
l\left(\beta_1\cdots \beta_l\right)^{1/l}\le \beta_1+\dots+\beta_l,\qquad l\in\mathbb{N},\quad \beta_1,\dots,\beta_l\ge 0, 
\]
with $l=n$ and $\beta_j=x+j$, $j=1,\dots,n$, and raise both sides of the result to the power $n$.
Since
\[
\frac1n\sum_{j=1}^n(x+j)=x+\frac{n+1}{2}=y,
\]
we obtain
\[
\Pi_n(x)=\prod_{j=1}^n(x+j)\le y^n.
\]

For the lower bound, we distinguish the parity of $n$.

If $n=2m$, then
\[
\Pi_n(x)
=\prod_{k=1}^m\left(y-(k-\frac12)\right)\left(y+(k-\frac12)\right)
= y^n \prod_{k=1}^m\left(1-\frac{(k-\frac12)^2}{y^2}\right).
\]
Since $y\ge m-\frac12$, all factors on the right belong to $[0,1]$. Hence, using
\[
\prod_{j=1}^m (1-u_j)\ge 1-\sum_{j=1}^m u_j,
\qquad 0\le u_j\le 1,
\]
we get
\[
\Pi_n(x)\ge y^n-\left(\sum_{k=1}^m\left(k-\frac12\right)^2\right)y^{n-2}.
\]
Now
\[
\sum_{k=1}^m\left(k-\frac12\right)^2
=\frac{m(4m^2-1)}{12}
=\frac{(n-1)n(n+1)}{24}
=\alpha_n.
\]

If $n=2m+1$, then
\[
\Pi_n(x)
= y\prod_{k=1}^m (y-k)(y+k)
= y^n \prod_{k=1}^m\left(1-\frac{k^2}{y^2}\right).
\]
Again $y\ge m$, so all factors belong to $[0,1]$, and therefore
\[
\Pi_n(x)\ge y^n-\left(\sum_{k=1}^m k^2\right)y^{n-2}.
\]
Since
\[
\sum_{k=1}^m k^2
=\frac{m(m+1)(2m+1)}{6}
=\frac{(n-1)n(n+1)}{24}
=\alpha_n,
\]
the lower bound follows also in this case.

This proves \eqref{eq:Pibounds1}.
\end{proof}

\begin{lemma}\label{lem:Pi3}
Let $x\ge -1$, $n\in\mathbb{N}$. Then
\begin{equation}\label{eq:Pibounds2}
\left(x+\frac{n+1}{2}\right)\Pi_n(x)\ge \left(x+\frac{n}{2}+1\right)^{n+1} -\frac{n+1}{2}\left(x+\frac{n}{2}+1\right)^n -\gamma_n\left(x+\frac{n}{2}+1\right)^{n-1}=S_{n+2}\left(x+\frac{n}{2}+1\right),
\end{equation}
where 
\begin{equation}\label{eq:gamman}
\gamma_n:=\frac{(n-4)n(n+1)}{24},
\end{equation}
and $S_{n+2}$ is defined by \eqref{eq:Sd}.
\end{lemma}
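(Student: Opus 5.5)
The plan is to rewrite both sides as polynomials in the single variable $z:=x+\frac n2+1$ and to use the root structure of the left-hand side. The condition $x\ge-1$ is the same as $z\ge\frac n2$.

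\emph{Step 1: roots of the left-hand side.} Set $P(z):=\left(x+\frac{n+1}{2}\right)\Pi_n(x)$.
\begin{itemize}
\item The factor $x+\frac{n+1}{2}$ equals $z-\frac12$.
\item For $j=1,\dots,n$, the factor $x+j$ equals $z-\left(\frac n2+1-j\right)$.
\end{itemize}
So $P$ is monic of degree $n+1$. Its roots are $\frac12$ together with $\frac n2,\frac n2-1,\dots,1-\frac n2$, which is a set symmetric about $\frac12$. All roots are at most $\frac n2\le z$, so every factor is non-negative in our range.

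The sum of the roots is $\frac{n+1}{2}$. This gives the coefficient $-\frac{n+1}{2}$ of $z^n$, exactly as on the right-hand side.

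The coefficient $e_2$ of $z^{n-1}$ is $\frac12\bigl(s_1^2-p_2\bigr)$, where $s_1$ is the sum and $p_2$ the sum of squares of the roots. I will verify by direct summation that $e_2=-\gamma_n$. As a check, for $n=1$ one gets $e_2=\frac14=-\gamma_1$. Hence the right-hand side of \eqref{eq:Pibounds2}, namely $S_{n+2}(z)$, is precisely the truncation of $P(z)$ after its first three terms. The claim therefore reduces to showing that the tail $R(z):=P(z)-S_{n+2}(z)$, of degree at most $n-2$, is non-negative for $z\ge\frac n2$.

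\emph{Step 2: reduce to Lemma \ref{lem:Pi1}.} I will pass to the centred variable $w:=z-\frac12=x+\frac{n+1}{2}\ge\frac{n-1}{2}$.
\begin{itemize}
\item In this variable $P=w\,\Pi_n(x)$.
\item Lemma \ref{lem:Pi1} gives $P\ge w^{n+1}-\alpha_n w^{n-1}$.
\item So it suffices to prove $D(w):=w^{n+1}-\alpha_n w^{n-1}-S_{n+2}\left(w+\frac12\right)\ge0$ for $w\ge\frac{n-1}{2}$.
\end{itemize}
By the coefficient matching of Step 1, the coefficients of $w^{n+1}$, $w^n$ and $w^{n-1}$ in $D$ cancel. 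Thus $D(w)$ equals the binomial tail $-\bigl[(w+\tfrac12)^{n+1}-\tfrac{n+1}{2}(w+\tfrac12)^n-\gamma_n(w+\tfrac12)^{n-1}\bigr]_{\le n-2}$.

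To handle this tail, I will write $S_{n+2}(w+\frac12)=(w+\frac12)^{n-1}q(w)$ with $q$ quadratic. I will then compare with $w^{n-1}\bigl(w^2-\alpha_n\bigr)$, which is the same as $w^{n-1}\bigl(q(w)+c\bigr)$ for an explicit constant $c$ coming from the cancellation. The comparison will use $(w+\frac12)^{n-1}\ge w^{n-1}$ together with the sign of $q$.

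\emph{Main obstacle.} The hard part is the sign analysis of $D$ (or of $R$) near the left endpoint $w=\frac{n-1}{2}$, where the lower-order binomial terms are comparable in size to the leading ones. If the route through Lemma \ref{lem:Pi1} loses too much there, I will instead work with $P$ directly. Since the roots are symmetric about $\frac12$, $P(w)=w\prod_k\left(w^2-r_k^2\right)$ is an odd or even polynomial in $w$ with alternating elementary symmetric coefficients $e_{2i}(r^2)$. I would then use the Newton/Maclaurin-type bound $e_{2i}\le e_2^i/i!$, as in the product inequality in the proof of Lemma \ref{lem:Pi1}, to control the tail $R$ termwise for $w$ in the admissible range. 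The small cases $n=1,2,3$, where $\gamma_n\le 0$ and the tail is short, would be checked by hand.
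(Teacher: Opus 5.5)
Your Step 1 and the first half of Step 2 are correct and follow the paper's route. With $w=x+\frac{n+1}{2}$, Lemma \ref{lem:Pi1} reduces the claim to $D(w)=w^{n+1}-\alpha_n w^{n-1}-S_{n+2}(w+\frac12)\ge 0$. Your computation $e_2=-\gamma_n$ is right, so the top three coefficients of $D$ cancel.

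The argument you then give for $D\ge 0$ fails, for two reasons. First, $w^2-\alpha_n$ is not $q(w)+c$ with $c$ constant. With $q(w)=w^2-\frac{n-1}{2}w-\frac n4-\gamma_n$ one gets $w^2-\alpha_n-q(w)=\frac{n-1}{2}\left(w-\frac n4\right)$. Second, and more seriously, $(w+\frac12)^{n-1}\ge w^{n-1}$ only helps where $q(w)\le 0$. Where $q(w)>0$, i.e.\ for all large $w$, it bounds $(w+\frac12)^{n-1}q(w)$ from below, which is the wrong direction.

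That large-$w$ regime is where the content lies. There the top three terms on both sides coincide, so you must control an $O(w^{n-2})$ remainder. Replacing $(w+\frac12)^{n-1}$ by $w^{n-1}$ discards a term of size about $\frac{n-1}{2}w^{n}$, far larger than that remainder. So your ``main obstacle near the left endpoint'' is misplaced. The Maclaurin fallback also aims at the wrong object: it controls the tail of $P$ in $w$, which Lemma \ref{lem:Pi1} already handles, not the low-degree part of $S_{n+2}(w+\frac12)$, where the remaining inequality lives.

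The missing idea is that no sign analysis on an interval is needed. You had already written $D(w)$ as minus the degree-$\le n-2$ part of $(w+\frac12)^{n-1}q(w)$, but did not check the signs of its coefficients. Every one of them is non-positive, so $D$ has non-negative coefficients and $D(w)\ge 0$ for all $w\ge 0$. This also makes the separate small-$n$ checks unnecessary. Concretely, use $\frac n4+\gamma_n=\frac{n(n-1)(n-2)}{24}$. For $0\le j\le n-2$, with $\binom{n-1}{-1}=\binom{n-1}{-2}=0$, the coefficient of $w^j$ in $(w+\frac12)^{n-1}q(w)$ is
\[
\left(\tfrac12\right)^{n-1-j}\left(\tfrac14\tbinom{n-1}{j-2}-\tfrac{n-1}{4}\tbinom{n-1}{j-1}-\tfrac{n(n-1)(n-2)}{24}\tbinom{n-1}{j}\right)
=-\left(\tfrac12\right)^{n-1-j}\tbinom{n-1}{j}\,\frac{nj(n-j)+\frac16 n(n-1)(n-2)(n-j)(n-j+1)}{4(n-j)(n-j+1)}\le 0 .
\]
This is exactly the computation the paper carries out.
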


\begin{proof}
Set
\[
y:=x+\frac{n+1}{2}\ge 0,\qquad z:=x+\frac{n}{2}+1=y+\frac{1}{2}.
\]
By Lemma~\ref{lem:Pi1},
\[
y\,\Pi_n(x)\ge y^{n+1}-\alpha_n y^{n-1},
\]
so it suffices to show that for $y\ge 0$ and $n\in\mathbb N$,
\begin{equation}\label{eq:sufficesforPi3}
y^{n+1}-\alpha_n y^{n-1}\ge \left(y+\frac 12\right)^{n-1}\left(y^2-\frac{n-1}{2}y-\frac{n}{4}-\gamma_n\right).
\end{equation}
Expanding the right-hand side of \eqref{eq:sufficesforPi3} using a binomial series, and rearranging terms, it becomes
\begin{equation}\label{eq:suffforsuff}
\sum_{j=2}^{n+1}\binom{n-1}{n+1-j}\left(\frac 12\right)^{n+1-j} y^j -\frac{n-1}{2}\sum_{j=1}^{n}\binom{n-1}{n-j}\left(\frac 12\right)^{n-j}y^j - \left(\frac n4+\gamma_n\right)\sum_{j=0}^{n-1}\binom{n-1}{n-1-j}\left(\frac 12\right)^{n-1-j}y^j.
\end{equation}
The coefficient of $y^{n+1}$ is equal to one, the coefficient of $y^n$ is zero, and the coefficient of $y^{n-1}$ is
\[
-\frac{(n-1)n}{8}-\frac{n}{4}-\gamma_n = -\alpha_n.
\]
All of these are equal to the coefficients of the left-hand side of \eqref{eq:sufficesforPi3}. It remains to show that all coefficients of $y^j$, $j=0,\dots,n-2$, of \eqref{eq:suffforsuff} are negative. This may easily be checked directly for $j=0$ and $j=1$. For $2\le j\le n-2$, the coefficient is, by symmetry of binomial coefficients,
\[
\left(\frac 12\right)^{n-1-j}\left(\frac 14\binom{n-1}{j-2} - \frac{n-1}{4}\binom{n-1}{j-1}-\left(\frac n4+\gamma_n\right)\binom{n-1}{j}\right).
\]
Factoring out $\binom{n-1}{j}$, this is
\[
\left(\frac 12\right)^{n-1-j}\binom{n-1}{j}\left(\frac 14\frac{(j-1)j}{(n-j)(n-(j-1))} - \frac{n-1}{4}\frac{j}{n-j}-\frac{1}{24}n(n-1)(n-2))\right).
\]
Simplifying this yields
\[
\left(\frac 12\right)^{n-1-j}\binom{n-1}{j}\frac{1}{4(n-j)(n-j+1)}\left(-nj(n-j) - \frac 16n(n-1)(n-2)(n-j)(n-j+1) \right),
\]
which is negative, as desired.
\end{proof}

\begin{lemma}\label{lem:Gammaratio1}
For every real $d\ge 3$, and  with $\mathcal{R}_d$ given by  \eqref{eq:Rd},
\begin{equation}\label{eq:Rdbounds}
1+\frac{1}{4d}-\frac{1}{32d^2}
<
\mathcal{R}_d 
<
1+\frac{1}{4d}+\frac{1}{32d^2}.
\end{equation}
\end{lemma}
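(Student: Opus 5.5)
The plan is to work in the variable $x:=d/2\ge\frac32$ and write $R(x):=\mathcal{R}_{2x}=x^{-1/2}\,\Gamma(x+1)/\Gamma\bigl(x+\tfrac12\bigr)$. The target bounds then read
\[
L(x):=1+\frac{1}{8x}-\frac{1}{128x^2}<R(x)<1+\frac{1}{8x}+\frac{1}{128x^2}=:U(x).
\]
This is the natural shape: the classical asymptotic expansion gives $R(x)=1+\frac1{8x}+\frac1{128x^2}-\frac{5}{1024x^3}+\dots$. So the upper bound is sharp to second order and relies on the negative cubic term, while the lower bound has slack already at second order.

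The key tool is a one-step functional relation for $R$ that avoids any delicate Stirling remainders. From $\Gamma(x+2)=(x+1)\Gamma(x+1)$ and $\Gamma\bigl(x+\tfrac32\bigr)=\bigl(x+\tfrac12\bigr)\Gamma\bigl(x+\tfrac12\bigr)$ we get
\[
\frac{R(x+1)}{R(x)}=\frac{\sqrt{x(x+1)}}{x+\frac12}.
\]
We also need $R(x)\to1$ as $x\to\infty$, which is the standard limit $\Gamma(x+1)/\Gamma\bigl(x+\tfrac12\bigr)\sim\sqrt{x}$ (Wendel's inequality, or Stirling). Then for $g_U:=R/U$ and $g_L:=R/L$ we have $g_U(x+n),g_L(x+n)\to1$ as $n\to\infty$. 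So it suffices to prove $g_U(x)<g_U(x+1)$ and $g_L(x)>g_L(x+1)$ for every real $x\ge\frac32$. Iterating along $x,x+1,x+2,\dots$ then gives $g_U(x)<1<g_L(x)$, which are exactly the two claimed strict inequalities.

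The monotonicity steps reduce to elementary inequalities:
\[
\frac{U(x+1)}{U(x)}<\frac{\sqrt{x(x+1)}}{x+\frac12}<\frac{L(x+1)}{L(x)},\qquad x\ge\tfrac32.
\]
Note that $L>0$ there. Squaring and multiplying by $(128)^2x^4(x+1)^4\bigl(x+\tfrac12\bigr)^2$ turns each into a polynomial inequality $P_U(x)>0$, respectively $P_L(x)>0$, with integer coefficients. I would expand these (by computer algebra, as elsewhere in the paper) and aim to show positivity on $[\frac32,\infty)$. First try substituting $x=\frac32+s$ and checking that all coefficients in $s$ are nonnegative, with a positive constant term. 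If some coefficients are negative, bound those terms by a dominant positive term on $s\ge0$.

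The main obstacle I expect is the upper bound. Since $U$ matches $R$ through order $x^{-2}$, the difference between the two sides of the upper-bound inequality is only of order $x^{-5}$, driven by the $-\frac{5}{1024}x^{-3}$ term. So $P_U$ will have massive cancellation in its top coefficients, and its leading surviving coefficient must come out positive. If positivity on all of $[\frac32,\infty)$ fails near the left endpoint, I would split the range. On some $[x_0,\infty)$ I would use the polynomial argument. On $[\frac32,x_0]$ I would check the original bounds directly, using the monotone dependence of $\Gamma$-ratios on a fine rational grid together with the Lipschitz control coming from the same recursion.
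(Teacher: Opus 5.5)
Your argument is correct, and it takes a genuinely different route from the paper. The paper never uses the recursion. It imports an explicit expansion of the Wallis ratio with quantified remainder (Lampret), $\log R(x)=\frac{1}{8x}-\frac{1}{192x^3}+\delta(x)$ with $|\delta(x)|<\frac{1}{630x^5}$. After that it only needs the elementary inequality $\log(1+t)>t-\frac{t^2}{2}$ for the upper bound and $\er^t>1+t$ for the lower bound, on $y=1/x\in(0,\frac23]$.

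Your telescoping argument replaces that cited remainder estimate with the exact identity $R(x+1)/R(x)=\sqrt{x(x+1)}/(x+\frac12)$ and the soft fact $R(x)\to1$. The work then moves into two polynomial inequalities, and these turn out to be easy, so your fallback of splitting the range is not needed. Put $u(x)=128x^2+16x+1$, $\ell(x)=128x^2+16x-1$ and $h=x+\frac12\ge2$. The two one-step inequalities are $P_U>0$ and $P_L>0$, where
\[
P_U:=(x+1)^5u(x)^2-x^3\bigl(x+\tfrac12\bigr)^2u(x+1)^2=480h^5+364h^4-\frac{793}{4}h^3-\frac{1333}{8}h^2+\frac{325}{16}h+\frac{625}{32},
\]
\[
P_L:=x^3\bigl(x+\tfrac12\bigr)^2\ell(x+1)^2-(x+1)^5\ell(x)^2=1024h^6+96h^5-732h^4-\frac{39}{4}h^3+\frac{1461}{8}h^2-\frac{69}{16}h-\frac{529}{32}.
\]
Both are plainly positive for $h\ge2$: for instance $480h^5>\frac{793}{4}h^3$ and $364h^4>\frac{1333}{8}h^2$, and $1024h^6>732h^4$ and $96h^5>\frac{39}{4}h^3$, with the remaining terms handled trivially.

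One small correction to your expectation. In $P_U$ the coefficients of $h^9,\dots,h^6$ cancel, so the relative gap is of order $x^{-4}$, not $x^{-5}$. This matches $\log U-\log R\sim\frac{5}{1024x^3}$, whose one-step difference is $\sim\frac{15}{1024x^4}$.

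The paper's route gives brevity and direct two-sided control of $\log R$. Yours is self-contained, needing only something like Wendel's inequality for the limit. It is also systematic: any further truncation of the asymptotic series could be certified as a bound the same way.
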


\begin{proof}
Set \(x:=\frac{d}{2}\), so that \(x\ge\frac{3}{2}\), and write
\[
R(x):=\frac{\Gamma(x+1)}{\Gamma\left(x+\frac{1}{2}\right)\sqrt{x}}.
\]
Thus it suffices to prove that
\[
1+\frac{1}{8x}-\frac{1}{128x^2}<R(x)<1+\frac{1}{8x}+\frac{1}{128x^2}.
\]

Let
\[
W(x):=\frac{1}{\sqrt{\pi}}\frac{\Gamma\left(x+\frac{1}{2}\right)}{\Gamma(x+1)}
\]
denote the generalised Wallis ratio. By its explicit asymptotic expansion with remainder, see \cite[Lemma 3.1]{Lampret},
\[
W(x) =
\frac{1}{\sqrt{\pi x}}
\exp\left(-\frac{1}{8x}+\frac{1}{192x^3}+\delta(x)\right),
\qquad
|\delta(x)|<\frac{1}{630x^5}.
\]
Since \(R(x)=\frac{1}{\sqrt{\pi x}\,W(x)}\), it follows that
\[
\frac{1}{8x}-\frac{1}{192x^3}-\frac{1}{630x^5}
<
\log R(x)
<
\frac{1}{8x}-\frac{1}{192x^3}+\frac{1}{630x^5}.
\]

For the upper bound, with \(y=\frac{1}{x}\in\left(0,\frac{2}{3}\right]\),
\[
\log\left(1+\frac{y}{8}+\frac{y^2}{128}\right)
> \frac{y}{8}+\frac{y^2}{128} - \frac{1}{2}\left(\frac{y}{8}+\frac{y^2}{128}\right)^2
=\frac{y}{8}-\frac{y^3}{1024}-\frac{y^4}{32768}
> \frac{y}{8}-\frac{y^3}{192}+\frac{y^5}{630},
\]
where the second inequality is true for \(0<y\le \frac{2}{3}\) since
\[
\begin{split}
\left(-\frac{y^3}{1024}-\frac{y^4}{32768}\right)
-\left(-\frac{y^3}{192}+\frac{y^5}{630}\right) 
&=
\frac{13 y^3}{3072}\left(1 - \frac{3 y}{416} - \frac{512 y^2}{1365}\right)\\
&\ge
\frac{13 y^3}{3072}\left(1 - \frac{3 (2/3)}{416} - \frac{512 (2/3)^2}{1365}\right)=\frac{162847 y^3}{46448640}>0.
\end{split}
\]
Hence
\[
\log R(x)< \frac{1}{8x}-\frac{1}{192x^3}+\frac{1}{630x^5}
< \log\left(1+\frac1{8x}+\frac1{128x^2}\right),
\]
and therefore
\[
R(x)<1+\frac1{8x}+\frac1{128x^2}.
\]

For the lower bound, we use the elementary inequality \(\er^t>1+t\), valid for all real \(t\). Thus
\[
R(x)=\er^{\log R(x)}
>
1+\log R(x)
>
1+\frac{1}{8x}-\frac{1}{192x^3}-\frac{1}{630x^5}.
\]
It therefore remains to check that
\[
\frac{1}{8x}-\frac{1}{192x^3}-\frac{1}{630x^5}
\ge
\frac{1}{8x}-\frac{1}{128x^2},
\]
that is,
\[
\frac{1}{128x^2}-\frac{1}{192x^3}-\frac{1}{630x^5}\ge 0.
\]
Multiplying by \(x^5>0\), this becomes
\[
\frac{x^3}{128}-\frac{x^2}{192}-\frac{1}{630}\ge 0.
\]
For \(x\ge\frac32\),
\[
\frac{x^3}{128}-\frac{x^2}{192}-\frac{1}{630}
\ge
\frac{(3/2)^3}{128}-\frac{(3/2)^2}{192}-\frac{1}{630}
=
\frac{221}{26880}>0.
\]
Hence
\[
R(x)>1+\frac1{8x}-\frac1{128x^2}.
\]

Substituting \(x=\frac d2\) gives
\[
1+\frac{1}{4d}-\frac{1}{32d^2}
<
\mathcal{R}_d 
<
1+\frac{1}{4d}+\frac{1}{32d^2},
\]
as claimed.
\end{proof}

Recall that
\[
w_d:=\frac{1}{2^d\Gamma\left(\frac d2+1\right)^2},
\qquad d\ge 3.
\]

\begin{lemma}\label{lem:wdrd}
Let $d\ge 3$. 
Then
\begin{alignat}{2}
\frac{w_{d-1}}{w_d} &= d\mathcal{R}_d^2,&\qquad \frac{w_{d-2}}{w_d} &=d^2,\label{eq:wratios}\\
\frac{1}{(d-1)!\,w_d} &= \sqrt{\frac{\pi}{2}}d^{3/2}\mathcal{R}_d,&\qquad \frac{1}{(d-2)!\,w_d} &= \sqrt{\frac{\pi}{2}}(d-1)d^{3/2}\mathcal{R}_d.\label{eq:factwratios}
\end{alignat}
\end{lemma}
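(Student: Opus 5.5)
The plan is to express every quantity through Gamma functions and use only the functional equation $\Gamma(s+1)=s\Gamma(s)$ together with the definition \eqref{eq:Rd} of $\mathcal{R}_d$. No analytic estimates are needed; each identity should be a short algebraic manipulation.

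\emph{The ratio $w_{d-2}/w_d$.} From $w_d=2^{-d}\Gamma\left(\frac d2+1\right)^{-2}$ we get
\[
\frac{w_{d-2}}{w_d}=4\,\frac{\Gamma\left(\frac d2+1\right)^2}{\Gamma\left(\frac d2\right)^2}=4\left(\frac d2\right)^2=d^2 .
\]

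\emph{The ratio $w_{d-1}/w_d$.} Similarly,
\[
\frac{w_{d-1}}{w_d}=2\,\frac{\Gamma\left(\frac d2+1\right)^2}{\Gamma\left(\frac d2+\frac12\right)^2}.
\]
Squaring \eqref{eq:Rd} gives $\mathcal{R}_d^2=\frac2d\,\Gamma\left(\frac d2+1\right)^2/\Gamma\left(\frac d2+\frac12\right)^2$. Hence $\frac{w_{d-1}}{w_d}=d\mathcal{R}_d^2$.

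\emph{The factorial identities.} Here the key tool is the Legendre duplication formula, applied with $s=\frac d2+\frac12$:
\[
\Gamma(d+1)=d!=\frac{2^{d}}{\sqrt{\pi}}\,\Gamma\left(\tfrac d2+\tfrac12\right)\Gamma\left(\tfrac d2+1\right).
\]
From this, $(d-1)!=\frac{2^d}{d\sqrt\pi}\Gamma\left(\frac d2+\frac12\right)\Gamma\left(\frac d2+1\right)$. Substituting,
\[
\frac{1}{(d-1)!\,w_d}=\frac{d\sqrt\pi\,\Gamma\left(\frac d2+1\right)}{\Gamma\left(\frac d2+\frac12\right)}.
\]
By \eqref{eq:Rd}, the ratio $\Gamma\left(\frac d2+1\right)/\Gamma\left(\frac d2+\frac12\right)$ equals $\mathcal{R}_d\sqrt{d/2}$. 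So the right-hand side becomes $d\sqrt{\pi}\sqrt{d/2}\,\mathcal{R}_d=\sqrt{\frac\pi2}\,d^{3/2}\mathcal{R}_d$. The last identity then follows from $(d-2)!=(d-1)!/(d-1)$.

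\emph{Where care is needed.} The only step that needs attention is applying the duplication formula with the correct shift. This is easily checked at $d=3$: $\Gamma(2.5)\Gamma(2.5)\cdot 8/\sqrt\pi$ should equal $6$. Indeed $\Gamma(2.5)=\frac{3\sqrt\pi}{4}$, and $8\cdot\frac{9\pi}{16}/\sqrt\pi$ — more precisely, $\Gamma(2)\Gamma(2.5)\cdot 8/\sqrt\pi=6$ once $s$ is chosen as above. Everything is valid for all real $d\ge3$, since none of the steps use integrality beyond the factorial notation, interpreted as $\Gamma(d)$ and $\Gamma(d-1)$.
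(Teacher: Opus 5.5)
Your proof is correct and is essentially the paper's argument: the paper also invokes the duplication formula $\Gamma(\frac d2+1)\Gamma(\frac d2+\frac12)=2^{-d}\sqrt{\pi}\,d!$ and leaves the rest as ``elementary transformations''. You spell those out correctly, and you rightly note that the two ratios of the $w$'s need only $\Gamma(s+1)=s\Gamma(s)$ and the definition of $\mathcal{R}_d$. The one blemish is the first half of your $d=3$ sanity check, which is wrong since $\Gamma(5/2)^2\cdot 8/\sqrt{\pi}=9\sqrt{\pi}/2\neq 6$. Your corrected check $\Gamma(2)\Gamma(5/2)\cdot 8/\sqrt{\pi}=6$ is the right one and should simply replace it.
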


\begin{proof}
By the duplication formula,
\[
\Gamma\left(\frac d2+1\right)\Gamma\left(\frac d2+\frac{1}{2}\right)
=
2^{-d}\sqrt{\pi}\,\Gamma(d+1)
=
2^{-d}\sqrt{\pi}\,d!,
\]
and relations \eqref{eq:wratios}, \eqref{eq:factwratios} follow by elementary transformations.
\end{proof}

\section{Proofs of some technical results}\label{app:technical} 
\subsection{Proof of Lemma \ref{lem:prodint}}\label{proof:lem:prodint}
We have 
\[
\utilde{F}(z)-F(z)= \int_{0}^z \left(\utilde{F}'(t)-f(t)\right)\,\dr t\le 0.
\]
Integrating by parts, we get
\[
\begin{split}
 \int_{0}^b \utilde{F}'(z) g(z)\,\dr z -  \int_0^b f(z)g(z)\,\dr z &= \int_0^b \left(\utilde{F}'(z)-F'(z)\right) g(z) \,\dr z\\
&= \left.\left(\utilde{F}(z)-F(z)\right)g(z)\right|_0^b-\int_0^b \left(\utilde{F}(z)-F(z)\right)g'(z) \,\dr z\le 0
\end{split}
\]
since $\utilde{F}(0)=F(0)=0$, $\utilde{F}(b) \le F(b)$, $g(b)\ge 0$, and $(\utilde{F}(z)-F(z))g'(z)\ge 0$, thus proving  \eqref{eq:fgprod}.

\subsection{Proof of Proposition \ref{prop:Q1b} (the monotonicity of $\mathcal{T}_{d,1}(\tau)$ in $d$ part)}\label{proof:prop:Q1b}
First of all, we observe that both coefficients $t_{1,0}(d)$ and $t_{1,3}(d)$  are non-negative, with the former monotone decreasing in $d$ and the latter monotone increasing. Thus, to prove that $\mathcal{T}_{d,1}(\tau)$ is monotone decreasing in $d$ for all $\tau\ge 1$, it is sufficient to consider the ``worst-case" scenario of $\tau=1$. 

For $3\le d\le 6$, one has $(d-6)_+=0$, and therefore $\mathcal{T}_{d,1}(\tau)=t_{1,0}(d)$  is strictly decreasing in $d$. 

It remains to consider $d\ge 6$, which we now regard as a real variable. Then
\[
\left.\frac{\partial}{\partial d}\mathcal{T}_{d,1}(\tau)\right|_{\tau=1}
= -\frac{32d^2+24d+5}{128d^{7/2}}
+ \frac{8(100d^2-d+3)}{3(32d^2+8d-1)^2}.
\]
For $d\ge 6$,
\[
\frac{32d^2+24d+5}{128d^{7/2}}
>
\frac{32d^2}{128d^{7/2}}
=
\frac{1}{4d^{3/2}},
\]
whereas
\[
\frac{8(100d^2-d+3)}
{3(32d^2+8d-1)^2}
<
\frac{8\cdot 100d^2}{3(32d^2)^2}
=
\frac{25}{96d^2}.
\]
Thus
\[
\left.\frac{\partial}{\partial d}\mathcal{T}_{d,1}(\tau)\right|_{\tau=1}\le \frac{1}{4d^2}\left(\frac{25}{24}-\sqrt{d}\right)<0
\qquad \text{for all } d\ge 6.
\]
It follows that $\mathcal{T}_{d,1}(\tau)$ is decreasing as a function of the real variable $d$ on $[6,\infty)$, and hence also decreasing along the integers $d\ge 6$.

Combining this with the monotonicity for $3\le d\le 6$, we obtain the claimed monotone decrease in $d$.

\subsection{Proof of Proposition \ref{prop:Q2b} (the monotonicity of $\mathcal{T}_{d,2}(\tau)$ in $\tau$ and $d$ part)}\label{proof:prop:Q2b}

Throughout this  proof we regard $d\ge 3$ as a real variable, and  set
\[
\sigma:=\frac{1}{\tau^{2}},\qquad \xi:=\frac{1}{d}.
\]
We will prove that the function
\[
\mathcal{E}(\sigma,\xi):=\frac{\mathcal{T}_{1/\xi,2}\left(\frac{1}{\sqrt{\sigma}}\right)}{t_{2,2}}
\]
is monotone increasing in both variables in the region $(\sigma,\xi)\in(0,1)\times\left(0,\frac{1}{3}\right]$.

By \eqref{eq:T2}, we have
\begin{equation}\label{eq:E}
\begin{split}
\mathcal{E}(\sigma,\xi)&=\sigma+\alpha_{4}\left(\frac{1}{\xi}\right)\sigma^2+\alpha_{6}\left(\frac{1}{\xi}\right)\sigma^3+\alpha_{8}\left(\frac{1}{\xi}\right)\sigma^4\\
&=\sigma + \left(-\beta_{4,0}+\beta_{4,1}\xi\right)\sigma^2 +  \left(\beta_{6,0}-\beta_{6,1}\xi+\beta_{6,2}\xi^2\right)\sigma^3 +  \left(\beta_{8,1}\xi-\beta_{8,2}\xi^2+\beta_{8,3}\xi^3\right)\sigma^4,
\end{split}
\end{equation}
where the coefficients
\begin{gather*}
\beta_{4,0}=\frac{5 (3 \pi )^{2/3}}{28\ 2^{1/3}}\approx 0.632,\qquad \beta_{4,1}=\frac{(3 \pi )^{2/3} \left(14 \sqrt{2}+3 \pi \right)}{56\ 2^{5/6}}\approx 1.307,\\
\beta_{6,0}=\frac{5 \pi ^{4/3}}{32\ 6^{2/3}} \approx  0.218,\qquad \beta_{6,1}=\frac{\pi ^{4/3} \left(3 \pi  \sqrt{2}+38\right)}{64\ 6^{2/3}} \approx 1.118,\qquad \beta_{6,2}=\frac{\sqrt[3]{3} \pi ^{4/3} \left(\pi  \sqrt{2}+6\right)}{32\ 2^{2/3}} \approx 1.364,\\
\beta_{8,1}=\frac{27 \pi ^2 \left(\sqrt{2} \pi -4\right)}{11264} \approx  0.0105,\ \ \beta_{8,2}=\frac{135 \pi ^2 \left(\sqrt{2} \pi -4\right)}{11264} \approx 0.0524,\ \ \beta_{8,3}=\frac{81 \pi ^2 \left(\sqrt{2} \pi -4\right)}{5632} \approx 0.0629,
\end{gather*}
are all chosen to be positive.

Differentiating \eqref{eq:E} with respect to $\sigma$, dropping the positive terms containing $\beta_{6,2}$, $\beta_{8,1}$, and  $\beta_{8,3}$, and estimating the negative term containing $-\beta_{8,2}\xi^2\sigma^3 \ge -\frac{1}{9}\beta_{8,2}$, we get
\[
\frac{\partial\mathcal{E}(\sigma, \xi)}{\partial\sigma}>\xi  \sigma  \left(2 \beta_{4,1}-3 \sigma  \beta_{6,1}\right)+1-\frac{4}{9} \beta_{8,2}-2 \sigma  \beta_{4,0}+3 \sigma ^2 \beta_{6,0}.
\] 
As the right-hand side is linear in $\xi$, it is sufficient to check its positivity only for $\xi\in\left\{0,\frac{1}{3}\right\}$. We have, by minimising the quadratic expression in $\sigma$, 
\[
\begin{split}
\left.\frac{\partial\mathcal{E}(\sigma, \xi)}{\partial\sigma}\right|_{\xi=0}&>1-\frac{4}{9} \beta_{8,2}-2 \sigma  \beta_{4,0}+3 \sigma ^2 \beta_{6,0}\ge -\frac{\beta_{4,0}^2}{3 \beta_{6,0}}+1-\frac{4}{9}\beta_{8,2}\\
&=\frac{19}{49}-\frac{15 \pi ^2 \left(\sqrt{2} \pi -4\right)}{2816}\approx 0.364>0,
\end{split}
\]
and also
\[
\begin{split}
\left.\frac{\partial\mathcal{E}(\sigma, \xi)}{\partial\sigma}\right|_{\xi=\frac{1}{3}}&>\sigma ^2 \left(3 \beta_{6,0}-\beta_{6,1}\right)+\sigma  \left(\frac{2 \beta_{4,1}}{3}-2 \beta_{4,0}\right)+1-\frac{4 \beta_{8,2}}{9}\\
&\ge  \left(3 \beta_{6,0}-\beta_{6,1}\right)+ \left(\frac{2 \beta_{4,1}}{3}-2 \beta_{4,0}\right)+1-\frac{4 \beta_{8,2}}{9}\approx 0.119 >0,
\end{split}
\]
where the second inequality is due to the fact that the coefficients of terms with $\sigma$ and $\sigma^2$ are both negative, and therefore these term can be estimated from below by taking $\sigma=1$.
This finishes the proof of monotonicity in $\sigma$.

In a similar manner, differentiating \eqref{eq:E} with respect to $\xi$ and dropping the positive term containing $\beta_{8,3}$, we get
\[
\frac{\partial\mathcal{E}(\sigma, \xi)}{\sigma^2\partial\xi}>2 \xi  \sigma  \left(\beta_{6,2}-\sigma  \beta_{8,2}\right)+\sigma  \left(\sigma  \beta_{8,1}-\beta_{6,1}\right)+\beta_{4,1},
\]
and it is again sufficient to check that the right-hand side is positive for $\xi\in\left\{0,\frac{1}{3}\right\}$. We have
\[
\left.\frac{\partial\mathcal{E}(\sigma, \xi)}{\sigma^2\partial\xi}\right|_{\xi=0} > \sigma^2  \beta_{8,1} - \sigma \beta_{6,1}+\beta_{4,1}> \beta_{4,1}  - \beta_{6,1} \approx 0.189 >0,
\]
and 
\[
\begin{split}
\left.\frac{\partial\mathcal{E}(\sigma, \xi)}{\sigma^2\partial\xi}\right|_{\xi=\frac{1}{3}} &> \sigma ^2 \left(\beta_{8,1}-\frac{2 \beta_{8,2}}{3}\right)+\sigma  \left(\frac{2 \beta_{6,2}}{3}-\beta_{6,1}\right)+\beta_{4,1}\\
&>\beta_{4,1} -\beta_{6,1} - \frac{2 \beta_{8,2}}{3}\approx 0.154 >0,
\end{split}
\]
which completes the proof of monotonicity in $\xi$.

\subsection{Proof of Lemma \ref{lem:monder}}\label{proof:lem:monder}

First of all, $\omega<1$ by \eqref{eq:omega}, and therefore $\utilde{\mathcal{L}}'(\lambda)=1-\frac{\omega}{3}\lambda^{-2/3}>0$ for $\lambda\ge d^{3/2}$, so $\utilde{\mathcal{L}}$ is increasing on $\left[d^{3/2},\infty\right)$. Thus
\[
\utilde{\mathcal{L}}(\lambda)
\ge
\utilde{\mathcal{L}}\left(d^{3/2}\right)
=
d^{3/2}-\omega\,d^{1/2}
>
d^{3/2}-d^{1/2}
=
\sqrt d\,(d-1),
\]
and it is enough to establish positivity of 
\[
S_d'(z)
=
(d-1)\,z^{d-4}
\left(
z^2-\frac{d-2}{2}\,z-\frac{(d-3)(d-2)(d-6)}{24}
\right)
\]
for all $z\ge \sqrt{d}(d-1)$. As
\[
\frac{d-2}{2}<\frac{d-1}{2}\le \frac{\sqrt d\,(d-1)}{2}\le\frac z2,
\]
we have
\[
z^2-\frac{d-2}{2}\,z-\frac{(d-3)(d-2)(d-6)}{24}
>
\frac{z^2}{2}-\frac{(d-3)(d-2)(d-6)}{24}
\ge
\frac{d(d-1)^2}{2}-\frac{(d-3)(d-2)(d-6)}{24},
\]
which is positive for all $d\ge 3$. 
Therefore $S_d'(z)>0$ for all $z\ge\utilde{\mathcal{L}}(\lambda)$, as claimed.

\subsection{A bound for $\log$}

\begin{lemma}\label{lem:log}
Let $x\in(0,1)$. Then 
\[
-x>\log(1-x)>-x-\frac{x^2}{2(1-x)}.
\]
\end{lemma}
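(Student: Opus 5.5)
The plan is to compare $\log(1-x)$ with the two elementary bounds through their derivatives on $(0,1)$, using that all three functions vanish at $x=0$.

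For the upper bound, set $\phi(x):=-x-\log(1-x)$. Then $\phi(0)=0$ and
\[
\phi'(x)=-1+\frac{1}{1-x}=\frac{x}{1-x}>0\qquad\text{for }x\in(0,1).
\]
Hence $\phi(x)>0$ on $(0,1)$, which is exactly $-x>\log(1-x)$. Equivalently, this is the strict inequality $\log(1-x)<-x$ obtained from $\er^t>1+t$ with $t=-x\neq 0$.

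For the lower bound, set $\psi(x):=\log(1-x)+x+\frac{x^2}{2(1-x)}$, again with $\psi(0)=0$. Write $\frac{x^2}{2(1-x)}=\frac12\left(\frac{1}{1-x}-1-x\right)$, so its derivative is $\frac12\left(\frac{1}{(1-x)^2}-1\right)$. Therefore
\[
\psi'(x)=-\frac{1}{1-x}+1+\frac12\left(\frac{1}{(1-x)^2}-1\right).
\]
Put $u:=\frac{1}{1-x}>1$. Then
\[
\psi'(x)=\frac{u^2}{2}-u+\frac12=\frac{(u-1)^2}{2}>0,
\]
so $\psi$ is strictly increasing and $\psi(x)>0$ on $(0,1)$. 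This gives the lower bound.

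As a cross-check, one can also argue via series: $-\log(1-x)=\sum_{k\ge1}x^k/k$, while $x+\frac{x^2}{2(1-x)}=x+\sum_{k\ge2}x^k/2$, and $1/k\le 1/2$ for $k\ge 2$, with strict inequality from $k=3$ onward.

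There is no real obstacle. The only point needing care is the algebra in $\psi'$, and the substitution $u=1/(1-x)$ reduces it to a perfect square.
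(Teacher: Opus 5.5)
Your proof is correct, and your primary route differs from the paper's. The paper's proof is the series argument you give as a cross-check. It writes $\log(1-x)=-x-\sum_{k\ge2}x^k/k$, gets the upper bound from positivity of the sum, and gets the lower bound by replacing each $1/k$ with $1/2$, which sums to $\frac{x^2}{2(1-x)}$. Your main argument instead differentiates the two gap functions $\phi$ and $\psi$ and uses $\phi(0)=\psi(0)=0$. The series route is a one-liner and makes clear where the constant $\frac12$ comes from: it is the largest value of $1/k$ for $k\ge2$. Both strict inequalities come for free there, since the discarded terms are positive for $x>0$. Your derivative route avoids power series entirely and gives a little more information. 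Since $\phi'(x)=\frac{x}{1-x}$ and $\psi'(x)=\frac{(u-1)^2}{2}=\frac{x^2}{2(1-x)^2}$, you get exact integral formulas for both gaps. In particular, the lower bound has error $\psi(x)\sim x^3/6$ as $x\to0$.
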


\begin{proof}
We have 
\[
-x>\log(1-x)=-x-\sum_{k=2}^\infty \frac{x^k}{k} > -x-\sum_{k=2}^\infty \frac{x^k}{2} = -x-\frac{x^2}{2(1-x)},
\]
where the last equality is obtained by summing up the geometric series.
\end{proof}

As an immediate corollary, we obtain the lower bound
\begin{equation}\label{eq:loglower}
\log\left(1-\frac{2}{d}\right) > -2\frac{d-1}{d(d-2)}, 
\end{equation}
valid for all $d\ge 3$.

\subsection{Proof of Proposition \ref{prop:Q3b} (the monotonicity of $\mathcal{T}_{d,3}(\tau)$ in $\tau$ and $d$ part)}\label{proof:prop:Q3b}

By \eqref{eq:T3}, we may write
\[
\mathcal{T}_{d,3}(\tau)
=
\frac{1}{2}\sqrt{\frac{\pi}{2}}\,\er^{-\omega\tau^{-2}}\,\Psi(\tau, d),
\qquad
\Psi(\tau,d)
:=
1-\frac{\phi_1(d)}{\phi_3(\tau,d)}-\frac{\phi_2(d)}{\phi_3(\tau,d)^2}
\]
where
\[
\phi_3(\tau, d):=d\tau^3-\omega\tau,
\qquad
\phi_1(d):=\frac{d-1}{2\sqrt d},
\qquad
\phi_2(d):=\frac{(d-1)(d-2)(d-6)}{24 d}.
\]
The factor $\er^{-\omega\tau^{-2}}$ is positive, independent of $d$, and strictly increasing in $\tau$. It is therefore enough to prove that $\Psi(\tau, d)$ is positive and strictly increasing in each of the two variables $(\tau, d)\in[1,+\infty)\times[3,+\infty)$, treating $d$ as a real variable.

Since $0<\omega<1$ by \eqref{eq:omega}, we have, for $d\ge 3$ and $\tau\ge 1$,
\begin{equation}\label{eq:ellbounds}
\phi_3(\tau, d)\ge (d-\omega)\,\tau^3>(d-1)\,\tau^3\ge d-1\ge 2,
\qquad
\frac{\partial \phi_3(\tau, d)}{\partial \tau}=3d\tau^2-\omega>0,
\end{equation}
so $\phi_3(\tau,d)$ is positive and strictly increasing in $\tau$.

Differentiating in $\tau$,
\[
\frac{\partial}{\partial\tau}\Psi(\tau, d)
=
\left(\phi_1(d)+\frac{2\phi_2(d)}{\phi_3(\tau, d)}\right)
\frac{3d\tau^2-\omega}{\phi_3(\tau, d)^2},
\]
so by \eqref{eq:ellbounds} it suffices to check that the first factor is positive. For $d\ge 6$ we have $\phi_2(d)\ge 0$, so this is obviously the case. For $d\in\{3,4,5\}$, 
a direct check using \eqref{eq:ellbounds} shows that
\[
\phi_1(d)+\frac{2\phi_2(d)}{\phi_3(\tau, d)}\ge \frac{d-1}{2\sqrt d} - \frac{(d-1)(d-2)(6-d)}{24 d}=\frac{(d-1)}{2\sqrt d}\left(1-\frac{(d-2)(6-d)}{12\sqrt{d}}\right)>0.
\]

A direct differentiation in $d$ using $\frac{\partial\phi_3(\tau,d)}{\partial d}=\tau^3$ gives
\begin{equation}\label{eq:dAdB}
\begin{split}
\frac{\partial\Psi(\tau, d)}{\partial d}
&=
-\frac{\partial}{\partial d}\frac{\phi_1(d)}{\phi_3(\tau, d)}
-\frac{\partial}{\partial d}\frac{\phi_2(d)}{\phi_3(\tau, d)^2},\\
-\frac{\partial}{\partial d}\frac{\phi_1(d)}{\phi_3(\tau, d)}
&=
\frac{\tau\left(d(d-3)\tau^2+(d+1)\omega\right)}{4d^{3/2}\,\phi_3(\tau, d)^2},\\
\frac{\partial}{\partial d}\frac{\phi_2(d)}{\phi_3(\tau, d)^2}
&=
\frac{\tau\left(\left(9d-40+\frac{36}{d}\right)\tau^2-\left(2d-9+\frac{12}{d^2}\right)\omega\right)}{24\,\phi_3(\tau, d)^3}.
\end{split}
\end{equation}

Consider first the case $d\ge 6$. Then $2d-9>0$, and dropping the two $\omega$-terms in \eqref{eq:dAdB} (which decreases the first quantity and increases the second one) yields
\[
-\frac{\partial}{\partial d}\frac{\phi_1(d)}{\phi_3(\tau, d)}
>
\frac{(d-3)\,\tau^3}{4\sqrt d\,\phi_3(\tau, d)^2},
\qquad
\frac{\partial}{\partial d}\frac{\phi_2(d)}{\phi_3(\tau, d)^2}
\le
\frac{\left(9d^2-40d+36\right)\tau^3}{24\,d\,\phi_3(\tau, d)^3}
\le
\frac{3\,(d-2)^2\,\tau^3}{8\,d\,\phi_3(\tau, d)^3},
\]
where in the last step we used $9d^2-40d+36=9(d-2)^2-4d<9(d-2)^2$. Hence $\frac{\partial}{\partial d}\Psi(\tau, d)>0$ would follow from
\[
2\sqrt d\,(d-3)\,\phi_3(\tau, d)\ge 3(d-2)^2.
\]
Since $\phi_3(\tau, d)>d-1$ by \eqref{eq:ellbounds}, and $\sqrt{d}\ge \sqrt{6}$, we have
\[
2\sqrt d (d-3) \phi_3(\tau, d) - 3(d-2)^2 \ge 2\sqrt{6} (d-1)(d-3) - 3(d-2)^2 = (2\sqrt{6}-3)\left(d^2-4 d+\frac{2}{5} \left(6-\sqrt{6}\right)\right)>0
\]
as required.

It remains to consider $3\le d\le 6$, for which we argue directly from \eqref{eq:dAdB}. After rearrangement, the positivity of the derivative in question is equivalent to the following inequality:
\begin{equation}\label{eq:positivityinequality}
6(d\tau^3-\omega\tau)(d(d-3)\tau^2+(d+1)\omega)\ge \sqrt{d}(9d^2-40d+36)\tau^2-\sqrt{d}(2d^2-9d+\frac{12}{d})\omega.
\end{equation}
The left-hand side of \eqref{eq:positivityinequality} is bounded from below by using $(d+1)\omega\ge 3$ and $\omega<1$. The right-hand side is bounded from above by the easy-to-prove observation that the last term is at most 9 for $d\in[3,6]$, and that $9d^2-40d+36\le 9(d-2)^2$. It thus suffices to prove that for $d\in [3,6]$ and for all $\tau\ge 1$,
\begin{equation}\label{eq:positivityinequality2}
\phi_4(\tau,d):=6(d\tau^3-\tau)(d(d-3)\tau^2+3) -  9\sqrt{d}(d-2)^2\tau^2 - 9 \ge 0.
\end{equation}

We observe that for $d\in[3,6]$,
\[
\frac{\partial^3\phi_4}{\partial\tau^3} = 36d^2(10d(d-3)\tau^2+(6-d))\ge 0,
\]
and furthermore that the zeroth, first, and second $\tau$ derivatives of $\phi_4(\tau,d)$ are all positive at $\tau=1$. These derivatives are, respectively,
\begin{gather*}
6(d-1)(d(d-3)+3)-9\sqrt{d}(d-2)^2-9,\\
6(5d^3-18d^2+18d-3(d-2)^2\sqrt{d}-3),\\
6(20d^3-3d^{5/2}-66d^2+12d^{3/2}+36d-12\sqrt{d}).
\end{gather*}
Each one is a polynomial in $\sqrt{d}$ and is easily shown to be positive on the interval $[3,6]$. We leave the details to the interested reader.

Finally, by the two monotonicity properties just established,
\[
\Psi(\tau, d)\ge\Psi(1,3)
=
1-\frac{1}{\sqrt3\,(3-\omega)}+\frac{1}{12\,(3-\omega)^2}
>
1-\frac{1}{2\sqrt3}
>0.
\]
Thus $\Psi(\tau, d)$ is positive and strictly increasing in both $\tau$ and $d$, while the prefactor $\frac12\sqrt{\frac\pi2}\,\er^{-\omega\tau^{-2}}$ is positive, independent of $d$, and strictly increasing in $\tau$. Therefore $\mathcal{T}_{d,3}(\tau)$ is monotone increasing both in $\tau\ge1$ and in $d\ge3$, which completes the proof of Proposition \ref{prop:Q3b}.

\subsection{Proof of Lemma \ref{lem:constantmonotone}}\label{proof:constant}

As $\mathfrak{F}_d(\lambda) = \mathfrak{Z}_d(\mathfrak{X}(\lambda))$ and $\mathfrak X(\lambda)$ is strictly increasing in $\lambda$, it is enough to show that
\[
\frac{\partial \log  \mathfrak{Z}_d(\mathfrak{X})}{\partial \mathfrak{X}}
=
\frac1{\mathfrak X+\frac{n-1}{2}}
+\sum_{k=1}^{n-1}\frac1{\mathfrak X+k}
-\frac n{2\mathfrak X}
-\frac{n+2}{2(\mathfrak X+n)}<0,
\]
where we denoted $n:=d-2\ge 1$.

For $1\le k\le n-1$,
\[
\frac1{\mathfrak X+k}
+\frac1{\mathfrak X+n-k} = \frac{2\mathfrak X + n}{\mathfrak X^2+n\mathfrak X+k(n-k)} \le \frac{2\mathfrak X + n}{\mathfrak X^2+n\mathfrak X}
=
\frac1{\mathfrak X}
+\frac1{\mathfrak X+n}.
\]
Therefore
\[
\sum_{k=1}^{n-1}\frac1{\mathfrak X+k}
\le
\frac{n-1}{2}
\left(
\frac1{\mathfrak X}
+\frac1{\mathfrak X+n}
\right).
\]
Consequently
\[
\frac{\partial \log  \mathfrak{Z}_d(\mathfrak{X})}{\partial \mathfrak{X}}
\le
\frac1{\mathfrak X+\frac{n-1}{2}}
-\frac1{2\mathfrak X}
-\frac3{2(\mathfrak X+n)}=
-\frac{
4\mathfrak X^2+2(n-2)\mathfrak X+n(n-1)
}{
2\mathfrak X(\mathfrak X+n)(2\mathfrak X+n-1)
}<0
\]
for all $\mathfrak{X}>0$.

\subsection{Proof of Lemma \ref{lem:piecewise-unimodal}}\label{proof:testNF}

We have 
\[
\frac{\partial \log  \mathfrak{Z}_d(\mathfrak{X})}{\partial \mathfrak{X}}
=\mathfrak{G}_d(\mathfrak{X})-\frac{d}{2}\cdot\frac{2\mathfrak{X}+d-2}{\mathfrak{H}_d(\mathfrak{X})},
\]
where 
\[
\mathfrak{G}_d(\mathfrak{X}):=\frac{1}{\mathfrak{X}+\frac{d-3}{2}}+\sum_{j=0}^{d-3}\frac{1}{\mathfrak{X}+j}=\sum_{t\in Z_d} \frac{1}{\mathfrak{X}+t}, \qquad \mathfrak{H}_d(\mathfrak{X}):=\mathfrak{X}(\mathfrak{X}+d-2)+\mathfrak{b}_d,
\]
and $Z_d=\{0,\dots,d-3\}\cup\left\{\frac{d-3}{2}\right\}$ is interpreted as multi-set with $d-1$ elements. 
Multiplying by the positive quantity \(2\mathfrak{H}_d(\mathfrak{X})\), we see that the
critical points of $\mathfrak{Z}_d$ are the zeros of
\begin{equation}\label{eq:Jfrak}
\mathfrak{J}_d(\mathfrak{X}):= 2\mathfrak{H}_d(\mathfrak{X})\mathfrak{G}_d(\mathfrak{X})-d(2\mathfrak{X}+d-2).
\end{equation}
Clearly
\[
\mathfrak{J}_d(\mathfrak{X}) = 
\begin{cases} 
\frac{4\mathfrak{b}_d}{\mathfrak{X}}+O(1),\quad&\text{if }d=3\\
\frac{2\mathfrak{b}_d}{\mathfrak{X}}+O(1),\quad&\text{if }d\ge 4
\end{cases}
\to+\infty \qquad\text{as }\mathfrak{X}\to 0^+,
\]
and 
\[
\mathfrak{J}_d(\mathfrak{X})=-2\mathfrak{X}+O(1)\to-\infty\qquad\text{as }\mathfrak{X}\to+\infty.
\]
Thus, $\mathfrak{J}_d(\mathfrak{X})$ changes sign at least once on $(0, +\infty)$, and it remains to show that it is strictly decreasing.

We first note a lower bound
\[
\mathfrak{b}_d=\frac{\mathcal{I}_{d,1}}{\mathcal{I}_{d,2}}\ge \frac{(d-2)^2}{4}.
\]
Using the explicit formulae for the integrals, a direct calculation gives
\[
\mathcal{I}_{d,1}-\frac{(d-2)^2}{4}\mathcal{I}_{d,2}
=
\frac{6 d^2-16 d+24+3 d (d-2)^2 \log \left(1-\frac{2}{d}\right)}{4 d^3},
\]
which after substitution of the bound \eqref{eq:loglower} becomes $\mathcal{I}_{d,1}-\frac{(d-2)^2}{4}\mathcal{I}_{d,2}\ge \frac{d+6}{2d^3}>0$ as claimed. Therefore, we also have
\begin{equation}\label{eq:Hfrakbound}
\mathfrak{H}_d(\mathfrak{X})\ge \left(\mathfrak{X}+\frac{d}{2}-1\right)^2.
\end{equation}

As
\[
\mathfrak{H}'_d(\mathfrak{X})=2\mathfrak{X}+d-2,
\qquad
\mathfrak{G}'_d(\mathfrak{X})=-\sum_{t\in Z_d}\frac{1}{(\mathfrak{X}+t)^2}<0,
\]
we obtain, using \eqref{eq:Jfrak},  \eqref{eq:Hfrakbound}, and the fact that $d=1+\sum_{t\in Z_d} 1$, 
\[
\begin{split}
\mathfrak{J}'_d(\mathfrak{X}) &= 2(2\mathfrak{X}+d-2)\mathfrak{G}_d(\mathfrak{X})+2\mathfrak{H}_d(\mathfrak{X})\mathfrak{G}'_d(\mathfrak{X}) - 2d\\
&\le 2\left((2\mathfrak{X}+d-2)\mathfrak{G}_d(\mathfrak{X})+\left(\mathfrak{X}+\frac{d}{2}-1\right)^2\mathfrak{G}'_d(\mathfrak{X}) -d\right)\\
&=-2+2\sum_{t\in Z_d}\left(\frac{(2\mathfrak{X}+d-2)(\mathfrak{X}+t)-\left(\mathfrak{X}+\frac{d}{2}-1\right)^2}{(\mathfrak{X}+t)^2}-1\right)\\
&=-2-\sum_{t\in Z_d}\frac{(d-2 (t+1))^2}{2(\mathfrak{X}+t)^2} \le -2 <0.
\end{split}
\]
Hence $\mathfrak{J}_d(\mathfrak{X})$ is strictly decreasing,  from $+\infty$ to $-\infty$,   on the interval $(0,+\infty)$, and therefore has a single positive zero. Thus, $\mathfrak{Z}_d(\mathfrak{X})$ has a single critical point, a strict maximum, on the interval  $(0,+\infty)$, and as $\mathfrak{X}(\lambda)$ is monotone increasing, the same is true for $\mathfrak{F}_d(\lambda)$. 

\subsection{Proof of Proposition \ref{prop:othertestvalues}}\label{proof:prop:othertestvalues}
Set 
\[
\mathfrak{Q}=\mathfrak{Q}_d(\lambda):=\frac{\mathfrak{Y}(\lambda)}{\lambda}
\]
and
\begin{equation}\label{eq:PhiI}
\mathfrak{q}_d:=\mathcal{I}_{d,1}-\left(\frac{d}{2}-1\right)^2\mathcal{I}_{d,2}.
\end{equation}
A comparison of \eqref{eq:Y2defn}, \eqref{eq:ab}, and \eqref{eq:PhiI} using the explicit expression for $\mathcal{I}_{d,3}$ from \eqref{eq:INF} yields
\begin{equation}\label{eq:q2id}
\mathfrak{Q}^2=\mathfrak{a}_d\left(1-\frac{d^4\mathfrak{q}_d}{4\lambda^2}\right).
\end{equation}

\begin{lemma}\label{lem:q2}
With the choice of the test-function \eqref{eq:rhoNF}, for every $d\ge 4$, with $\epsilon=2/d$,
\begin{equation}\label{eq:series}
\frac{1}{\mathfrak{a}_d}=\sum_{n=0}^{\infty}\frac{24\,\epsilon^{n}}{(n+2)(n+3)(n+4)}=1+\frac{2\epsilon}{5}+\dots,
\end{equation}
\begin{equation}\label{eq:qseries}
\mathfrak{q}_d=\frac{\epsilon^2}{8}+\frac{\epsilon^3}{2}-\frac{3}{2}\sum_{n=4}^{\infty}\frac{\epsilon^{n}}{n(n-1)(n-2)}.
\end{equation}
Moreover,
\begin{equation}\label{eq:sw}
\frac{1}{1+\mathfrak{s}_d}\le \mathfrak{a}_d< 1, \qquad \text{with }\quad\mathfrak{s}_d:=\frac{4(d-1)}{5d(d-2)}
\end{equation}
\begin{equation}\label{eq:sw1}
0\le \frac{d}{4}\mathfrak{q}_d\le\mathfrak{t}_d, \qquad \text{with }\quad\mathfrak{t}_d:=\frac{1}{8d}+\frac{1}{d^{2}}.
\end{equation}
\end{lemma}

\begin{proof}
We have, by \eqref{eq:INF} and \eqref{eq:epsL},
\[
\frac{1}{\mathfrak{a}_d}=\frac{\mathcal{I}_{d,2}}{\mathcal{I}_{d,3}}=\frac{d^4}{4}\,\mathcal{I}_{d,2}
=\frac{d}{4}\left(3d(d-2)^2 \ell_d-6d^2+18d-8\right)
=\frac{12(1-\epsilon)^2}{\epsilon^4} \ell_d-\frac{12}{\epsilon^3}+\frac{18}{\epsilon^2}-\frac{4}{\epsilon}.
\]
Substituting the series expansion 
\[
\ell_d=-\log(1-\epsilon)=\sum_{k\ge1}\frac{\epsilon^{k}}{k},
\]
and collecting the terms, we arrive at \eqref{eq:series}. Clearly,  $\frac{1}{\mathfrak{a}_d}>1$. As for $n\ge2$ one has $(n+2)(n+3)(n+4)\ge 120$, we get
\[
\frac{1}{\mathfrak{a}_d}-1\le \frac{2\epsilon}{5}+\frac15\sum_{n\ge2}\epsilon^{n}
=\frac{2\epsilon}{5}+\frac{\epsilon^2}{5(1-\epsilon)}
=\frac{\epsilon(2-\epsilon)}{5(1-\epsilon)}
=\frac{4(d-1)}{5d(d-2)}=\mathfrak{s}_d,
\]
which is equivalent to \eqref{eq:sw}.

Substituting \eqref{eq:INF} into \eqref{eq:PhiI}, we get 
\[
\mathfrak{q}_d=\frac{6d^2-16d+24-3d(d-2)^2 \ell_d}{4d^3}
=\frac{3\epsilon}{4}-\epsilon^2+\frac{3\epsilon^3}{4}-\frac{3}{4}(1-\epsilon)^2\ell_d.
\]
Expanding $\ell_d$ as above and collecting the terms proves \eqref{eq:qseries}, and gives the upper bound in \eqref{eq:sw1}. Also, $\mathfrak{q}_d\ge0$ because
\[
\frac32\sum_{n=4}^\infty\frac{\epsilon^{n}}{n(n-1)(n-2)} \le \frac32\cdot\frac1{24}\cdot\frac{\epsilon^4}{1-\epsilon} \le\frac{\epsilon^2}{8}
\] 
for $\epsilon\le\frac12$, which is true since $\epsilon=2/d$, $d\ge 4$. 
\end{proof}

We now take 
\[
\tau^\bullet\in \left\{\tau_-^\bullet,\tau_+^\bullet\right\} = \left\{\frac{4}{5},\frac{28}{25}\right\}, 
\qquad \lambda^\bullet:={\tau^\bullet}^3 d^{3/2},
\qquad\mathfrak{Y}^\bullet:=\mathfrak{Y}\left(\lambda^\bullet\right),
\qquad\mathfrak{Q}^\bullet:=\frac{\mathfrak{Y}^\bullet}{\lambda^\bullet},
\]
and use \eqref{eq:N1stbound} to get 
\begin{equation}\label{eq:N1stboundbullet}
\begin{split}
\mathfrak{F}_d\!\left({\tau^\bullet}^3 d^{3/2}\right) &\ge {\tau^\bullet}^{-3} \mathfrak{c}_d \left(\frac{\mathfrak{Y}^\bullet}{\lambda^\bullet}\right)^{d-1} \left(1-\frac{d-1}{2\mathfrak{Y}^\bullet}-\frac{(d-1)(d-2)(d-6)}{24{\mathfrak{Y}^\bullet}^2}\right)\\
&= {\tau^\bullet}^{-3} \mathfrak{c}_d {\mathfrak{Q}^\bullet}^{d-1} \left(1-\frac{d-1}{2\lambda^\bullet\mathfrak{Q}^\bullet}-\frac{(d-1)(d-2)(d-6)}{24{\lambda^\bullet}^2{\mathfrak{Q}^\bullet}^2}\right).
\end{split}
\end{equation}
We now deduce a lower bound of the right-hand side of \eqref{eq:N1stboundbullet}, acting term-by-term.

By \eqref{eq:q2id} and Lemma \ref{lem:q2}, since ${\tau^\bullet}^{-6}\mathfrak{t}_d \le \left(\frac{5}{4}\right)^6 \left(\frac{1}{32}+\frac{1}{16}\right)<1$ for $d\ge 4$, 
\begin{equation}\label{eq:q2lb}
{\mathfrak{Q}^\bullet}^2=\mathfrak{a}_d\left(1-\frac{d \mathfrak{q}_d}{4{\tau^\bullet}^6}\right)
\ge \mathfrak{a}_d\left(1-{\tau^\bullet}^{-6}\mathfrak{t}_d\right)
\ge \frac{1-{\tau^\bullet}^{-6}\mathfrak{t}_d}{1+\mathfrak{s}_d}
=\frac{5d(d-2)}{5d^2-6d-4}-\frac{5 (d-2) (d+8)}{8 d \left(5 d^2-6 d-4\right){\tau^\bullet}^6}
=:{\utilde{\mathfrak{Q}}^\bullet}^2.
\end{equation}
As both $\mathfrak{s}_d$ and $\mathfrak{t}_d$ are decreasing in $d$, the right-hand side of \eqref{eq:q2lb} increases in $d$. From \eqref{eq:q2lb}, 
\[
\log {\mathfrak{Q}^\bullet}^{d-1}\ge\frac{d-1}{2}\left(\log\left(1-{\tau^\bullet}^{-6}\mathfrak{t}_d\right)-\log\left(1+\mathfrak{s}_d)\right)\right)
\ge \frac{d-1}{2}\left(-\frac{\mathfrak{t}_d}{{\tau^\bullet}^{6}-\mathfrak{t}_d}-\mathfrak{s}_d\right),
\]
where we have used $\log(1+\mathfrak{s}_d)\le \mathfrak{s}_d$ and, by Lemma \ref{lem:log} with $x={\tau^\bullet}^{-6}\mathfrak{t}_d$,
\[
\log\left(1-{\tau^\bullet}^{-6}\mathfrak{t}_d\right)>-{\tau^\bullet}^{-6}\mathfrak{t}_d-\frac{{\tau^\bullet}^{-12}\mathfrak{t}_d}{2(1-{\tau^\bullet}^{-6}\mathfrak{t}_d)}>-\frac{{\tau^\bullet}^{-6}\mathfrak{t}_d}{1-{\tau^\bullet}^{-6}\mathfrak{t}_d}=-\frac{\mathfrak{t}_d}{{\tau^\bullet}^{6}-\mathfrak{t}_d}.
\]
Hence, substituting the definitions of $\mathfrak{s}_d$ and $\mathfrak{t}_d$, we conclude that 
\begin{equation}\label{eq:q2lb1}
{\mathfrak{Q}^\bullet}^{d-1} \ge \exp\left(-\frac{2(d-1)^2}{5d(d-2)}\right)\,\exp\left(-\frac{(d-1) (d+8)}{2 \left(8 d^2 {\tau^\bullet}^{6}-d-8\right)}\right),
\end{equation}
and it is easily verified that both factors in the right-hand side of \eqref{eq:q2lb1} are increasing in $d$ for $d\ge 4$.

Finally, acting as in the proof of Proposition \ref{prop:constant2}, we estimate
\begin{equation}\label{eq:Kbound}
1-\frac{d-1}{2\lambda^\bullet\mathfrak{Q}^\bullet}-\frac{(d-1)(d-2)(d-6)}{24{\lambda^\bullet}^2{\mathfrak{Q}^\bullet}^2}\ge 
1-\frac{1}{2{\tau^\bullet}^3\sqrt{d} \utilde{\mathfrak{Q}}^\bullet}-\frac{1}{24{\tau^\bullet}^{6}{\utilde{\mathfrak{Q}}^\bullet}^2},
\end{equation}
with the right-hand side also monotone increasing in $d$.

Combining \eqref{eq:N1stboundbullet}--\eqref{eq:Kbound} and  \eqref{eq:cflower}, we obtain the lower bound
\[
\mathfrak{F}_d\!\left({\tau^\bullet}^3 d^{3/2}\right) \ge \frac{193}{77{\tau^\bullet}^3} \exp\left(-\frac{2(d-1)^2}{5d(d-2)}-\frac{(d-1) (d+8)}{2 \left(8 d^2 {\tau^\bullet}^{6}-d-8\right)}\right)
\left(1-\frac{1}{2{\tau^\bullet}^3\sqrt{d} \utilde{\mathfrak{Q}}^\bullet}-\frac{1}{24{\tau^\bullet}^{6}{\utilde{\mathfrak{Q}}^\bullet}^2}\right),
\]
valid for $\tau^\bullet=\tau^\bullet_\pm$, which is monotone increasing in $d$. Evaluating the right-hand side at $\tau^\bullet=\tau^\bullet_-$, $d=11$, and at $\tau^\bullet=\tau^\bullet_+$, $d=16$, gives  
\[
\mathfrak{F}_{11}\left({\tau^\bullet_-}^3 11^{3/2}\right)\approx 1.08168 >1, \qquad  \mathfrak{F}_{16}\left({\tau^\bullet_+}^3 16^{3/2}\right)\approx 1.011378 > 1,
\]
thus ensuring that $\mathfrak{F}_d\left({\tau^\bullet_-}^3 d^{3/2}\right) >1$ for all $d\ge 11$, and $\mathfrak{F}_d\left({\tau^\bullet_+}^3 d^{3/2}\right) >1$ for all $d\ge 16$.

In the remaining range of dimensions $d$,  we evaluate $\mathfrak{F}_d\left({\tau^\bullet_\pm}^3 d^{3/2}\right)$ directly using \eqref{eq:Fd}, obtaining
\[
\begin{array}{ccc@{\qquad\qquad}ccc}
d & \mathfrak{F}_d\left({\tau_-^\bullet}^{3}d^{3/2}\right) & \mathfrak{F}_d\left({\tau_+^\bullet}^{3}d^{3/2}\right)
& d & \mathfrak{F}_d\left({\tau_-^\bullet}^{3}d^{3/2}\right) & \mathfrak{F}_d\left({\tau_+^\bullet}^{3}d^{3/2}\right)\\\hline
 4 & 1.0709 & 1.0491 & 10 & 1.5179 & 1.0508\\
 5 & 1.2151 & 1.0456 & 11 & & 1.0526\\
 6 & 1.3103 & 1.0451 & 12 &  & 1.0543\\
 7 & 1.3805 & 1.0459 & 13 & & 1.0560\\
 8 & 1.4354 & 1.0473 & 14 &  & 1.0576\\
 9 & 1.4803 & 1.0490 & 15 &  & 1.0591
\end{array}
\]
All of these values exceed one, which completes the proof. \qed

\section{Rational enclosures for zeros of derivatives of ultraspherical Bessel functions}\label{app:rational}

The algorithm of \S\ref{sec:gap} is built around the following routines.  

\

\noindent\textbf{Procedure} $\mathtt{Sign}(X,x)$.

\noindent\textit{Input}: a function $X\in\left\{F_\nu, H_{d,m}\right\}$, $x\in\mathbb{Q}_{>0}$. 

\noindent\textit{Output}: $K\in\mathbb{N}$.

\noindent\textit{Workflow}:
\begin{enumerate} 
\item Set $t=\frac{x^2}{4}$, $N=8$. 
\item Compute the rational enclosure for $X(x)$ by Lemma \ref{lem:enclosures} with this $N$.
\item If the enclosure does not contain zero, return the common sign of its endpoints; otherwise, set $N:=2N$ and repeat step 2.
\end{enumerate}

\noindent\textit{Certification}: by Lemma \ref{lem:enclosures}.

\noindent\textit{Termination}:  since, by Lemma \ref{lem:trans}, all zeros of either $F_\nu$, or $H_{d,m}$ are irrational, the true value $X(x)\ne 0$.

\

\noindent\textbf{Procedure} $\mathtt{BesselZeroBrackets}(\nu,\Lambda)$.

\noindent\textit{Input}: $\nu\in\frac{1}{2}\mathbb{Z}$, $\Lambda\in\mathbb{Q}_{>0}$.

\noindent\textit{Output}: $K\in \mathbb{N}$ and rational numbers $0<a_1<b_1<a_2<\dots<a_K<b_K$ such that $j_{\nu,k}\in (a_k, b_k)$, $b_k-a_k<3$, and $a_K\ge \Lambda$.

\noindent\textit{Workflow}:
\begin{enumerate} 
\item Set $x_0=\nu$ and $s_0=+1$.
\item For $i\in\mathbb{N}$, set $x_i = x_{i-1}+3$, and compute $s_i = \mathtt{Sign}(F_\nu,x_i)$.
\item Whenever $s_{i-1} s_i = -1$, record the bracket $(x_{i-1},x_i)$, labelling them  $(a_k,b_k)$ in order of appearance, $k=1,2,\dots$.
\item Stop at the first recorded bracket with $a_k\ge \Lambda$  and set $K=k$. 
\end{enumerate}

\noindent\textit{Certification}: as easily follows from \cite{LorSze64}, we have $j_{\nu,k}-j_{\nu,k-1}\ge \pi>3$ for all $k\in\mathbb{N}$ and all $\nu\ge\frac{1}{2}$, therefore each interval of length $3$ may contain at most one zero (which is simple). As $j_{\nu,1}>\nu$,  this proves that the recorded brackets enumerate, in increasing order and without omission, the zeros $j_{\nu,1},j_{\nu,2}, \dots$.

\noindent\textit{Termination}:  As there is a finite number of zeros in each interval $(0,\Lambda]$, the process will terminate.  

\

\noindent\textbf{Procedure} $\mathtt{Refine}(X,(a,b))$.

\noindent\textit{Input}: a function $X\in\left\{F_\nu, H_{d,m}\right\}$, and a rational bracket $(a,b)$, $a<b$ with $\mathtt{Sign}(X,a)\ne \mathtt{Sign}(X,b)$ (exception: at $a=0$, the value $H_{d,m}(0)=m$ is used instead of  $\mathtt{Sign}(H_{d,m},0)$).

\noindent\textit{Output}: a bracket of half the width with the same property, containing the same unique zero: evaluate $\mathtt{Sign}\left(X,\frac{a+b}{2}\right)$, and keep the half with different signs at the ends.

\

\noindent\textbf{Procedure} $\mathtt{MAIN}(d,\Lambda,\epsilon)$.

\noindent\textit{Input}: an integer $d\ge 3$, a rational $\Lambda>0$ and a rational tolerance $\epsilon>0$. 

\noindent\textit{Output}: the \emph{complete} list $\mathtt{ListP}$ as described in \S\ref{sec:gap}.

We mention that by \eqref{eq:LS}, we only need to examine the finite number of $m$'s such that 
\[
0\le m\le \mathtt{M}_{d,\Lambda}=\max\{m\ge 0: m(m+d-2)<\Lambda^2\}.
\]

\noindent\textit{Workflow}:
\begin{enumerate} 
\item[0.] Initialise $\mathtt{ListP}:=\{(0,1,0)\}$ (the exceptional zero $p'_{d,0,1}=0\le\Lambda$). Compute $\mathtt{M}_{d,\Lambda}$: this is a finite integer computation.
\item[1.] \emph{The case $m=0$.} Run $\mathtt{BesselZeroBrackets}(d/2,\Lambda)$, giving brackets $(a_k,b_k)\ni j_{d/2,k}$, $k\le K$. For each $k$, repeatedly $\mathtt{Refine}$ the bracket for $F_{d/2}$ until either 
\begin{enumerate}
\item[(i)] $b_k\le\Lambda$ and $b_k-a_k\le\epsilon$: append $\left(0,\,k+1,\,\beta:=b_k\right)$ to $\mathtt{ListP}$ (recall $p'_{d,0,k+1}=j_{d/2,k}$), or
\item[(ii)] $a_k\ge\Lambda$: discard $(0,k+1)$ and all larger indices, and finish the case $m=0$. By construction $a_K\geq \Lambda$, so case (ii) occurs at the latest at $k=K$; all zeros with $k>K$ satisfy $j_{d/2,k}>j_{d/2,K}>a_K\ge\Lambda$.
\end{enumerate}
\item[2.] \emph{The cases $1\le m\le \mathtt{M}_{d,\Lambda}$.} For each such $m$ set $\nu=m+d/2-1$ and
\begin{enumerate}
\item[2.1.] Run $\mathtt{BesselZeroBrackets}(\nu,\Lambda)$, giving $K\ge1$ and brackets $(a_k,b_k)\ni j_{\nu,k}$. $\mathtt{Refine}$ each bracket (for $F_\nu$) until the \emph{ultraspherical derivative sign conditions}
\begin{equation}\label{eq:signconditions}
\mathtt{Sign}(H_{d,m},a_k)=\mathtt{Sign}(H_{d,m},b_k)=(-1)^k,\qquad k=1,\dots,K,
\end{equation}
hold. (\textit{Step termination:} as $a_k\uparrow j_{\nu,k}$ and $b_k\downarrow j_{\nu,k}$, both signs eventually equal $(-1)^k$ by Lemma \ref{lem:pandj}. Each $\mathtt{Refine}$ halves the width. Note that refining preserves $a_K\ge\Lambda$, since $a_K$ can only increase.)
\item[2.2.] Form the \emph{ultraspherical derivative  windows}
\[
\mathtt{W}_0:=(0,\,a_1),\qquad \mathtt{W}_k:=(b_k,a_{k+1}),\quad k=1,\dots,K-1 .
\]
By \eqref{eq:signconditions}, $H_{d,m}$ has certified opposite signs at the endpoints of each $\mathtt{W}_k$ (at the left endpoint of $\mathtt{W}_0$ the exact value $H_{d,m}(0)=m>0$ is used). By Lemma \ref{lem:pandj} and \eqref{eq:signconditions}, $\mathtt{W}_k$ contains \emph{exactly one} zero of $H_{d,m}$, namely $p'_{d,m,k+1}$; and no zero of $H_{d,m}$ in $(0,j_{\nu,K})$ lies outside $\bigcup_k \overline{\mathtt{W}_k}$, while $p'_{d,m,K+1}>j_{\nu,K}>a_K\ge\Lambda$.
\item[2.3.] For each $k=0,\dots,K-1$, repeatedly $\mathtt{Refine}$ the window $\mathtt{W}_k=(w_l,w_r)$ for $H_{d,m}$ until either
\begin{enumerate}
\item[(i)] $w_r\le\Lambda$ and $w_r-w_l\le\epsilon$: append $\left(m, k+1, w_r\right)$ to $\mathtt{ListP}$, or
\item[(ii)] $w_l\ge\Lambda$: discard $(m,k+1)$.
\end{enumerate}
(\textit{Step termination:}  let $p':=p'_{d,m,k+1}$; the bracket endpoints satisfy $w_l<p'<w_r$ with $w_r-w_l\to0$ under bisection, and $p'\ne\Lambda$ by Lemma \ref{lem:trans}; if $p'<\Lambda$ then eventually both $w_r\le\Lambda$ and $w_r-w_l\le\epsilon$ hold, while if $p'>\Lambda$ then eventually $w_l\ge\Lambda$.)
\end{enumerate}
\item[3.] Return $\mathtt{ListP}$.
\end{enumerate}

\end{appendices}

\end{document}